\def\authornotes{1pt}
\newcommand{\Ynote}[1]{\footnote{\color{ForestGreen}Yeshwanth: #1}}
\newcommand{\Nnote}[1]{\footnote{\color{Orange}Nilesh: #1}}
\newcommand{\Ynote}[1]{}
\newcommand{\Nnote}[1]{}
\begin{document}
\title{Optimal Mean Estimation without a Variance}
\author{Yeshwanth Cherapanamjeri\thanks{Department of Electrical Engineering and Computer Science. \texttt{yeshwanth@berkeley.edu}} \and Nilesh Tripuraneni\thanks{Department of Electrical Engineering and Computer Science. \texttt{nilesh\_tripuraneni@berkeley.edu}} \and Peter L. Bartlett\thanks{Department of Electrical Engineering and Computer Science, Department of Statistics. \texttt{peter@berkeley.edu}} \and Michael I. Jordan\thanks{Department of Electrical Engineering and Computer Science, Department of Statistics. \texttt{jordan@cs.berkeley.edu}}}

\date{}

\maketitle




\begin{abstract}
    We study the problem of heavy-tailed mean estimation in settings where the variance of the data-generating distribution does not exist. Concretely, given a sample $\bm{X} = \{X_i\}_{i = 1}^n$ from a distribution $\mc{D}$ over $\mb{R}^d$ with mean $\mu$ which satisfies the following \emph{weak-moment} assumption for some ${\alpha \in [0, 1]}$:
    \begin{equation*}
        \forall \norm{v} = 1: \mb{E}_{X \ts \mc{D}}[\abs{\inp{X - \mu}{v}}^{1 + \alpha}] \leq 1,
    \end{equation*}
    and given a target failure probability, $\delta$, our goal is to design an estimator which attains the smallest possible confidence interval as a function of $n,d,\delta$. For the specific case of $\alpha = 1$, foundational work of Lugosi and Mendelson exhibits an estimator achieving subgaussian confidence intervals, and subsequent work has led to computationally efficient versions of this estimator. Here, we study the case of general $\alpha$, and establish the following information-theoretic lower bound on the optimal attainable confidence interval:
    \begin{equation*}
        \Omega \lprp{\sqrt{\frac{d}{n}} + \lprp{\frac{d}{n}}^{\frac{\alpha}{(1 + \alpha)}} + \lprp{\frac{\log 1 / \delta}{n}}^{\frac{\alpha}{(1 + \alpha)}}}.
    \end{equation*}
    Moreover, we devise a computationally-efficient estimator which achieves this lower bound. 
\end{abstract}

\thispagestyle{empty}
\setcounter{page}{0}
\newpage

\section{Introduction}

We aim to solve a fundamental problem in statistical inference---mean estimation---under minimal assumptions. Formally, we seek the tightest confidence interval (up to constants) achievable for the problem of mean estimation, equipped solely with a weak moment assumption on the $X_i$ (say, when $X_i$ are drawn from a multivariate t-distribution):
\begin{problem}
    Consider a sequence of $n$ i.i.d. vectors from a distribution $\mc{D}$ over $\mb{R}^d$ with mean $\mu$ satisfying the following weak moment condition for some $0 \leq \alpha \leq 1$: 
    \begin{equation*}
        \forall v \in \mb{R}^d, \norm{v} = 1:\ \mb{E} \lsrs{\abs{\inp{v}{X_i - \mu}}^{1 + \alpha}} \leq 1, \quad \forall i \in \{1, \hdots, n \}. \tag{MC}
    \end{equation*}
    Given a confidence level $\delta > 0$, output an estimate $\hat{\mu}$ with the smallest radius $r_\delta$ satisfying:
    \begin{equation*}
        \mb{P} \lbrb{\norm{\hat{\mu} - \mu} > r_\delta} \leq \delta.
    \end{equation*}
\end{problem}
For the case of $\alpha = 1$, this amounts to an assumption on the spectral norm of the covariance matrix of the distribution $\mc{D}$. Even in this special case, estimators with optimal confidence interval, $r_\delta$, were only recently discovered (\cite{lugosi2017sub,catoni2017dimension}), building upon the one-dimensional median-of-means (MoM) framework introduced in \cite{NemYud83}. These estimators achieve the following rate:
\begin{equation*}
    r_\delta = O\lprp{\sqrt{\frac{d}{n}} + \sqrt{\frac{\log 1 / \delta}{n}}}.
\end{equation*}

Unfortunately, however, there is no known polynomial-time algorithm to compute the estimators proposed in these papers. Computationally-efficient estimators achieving the optimal confidence interval were first proposed in \cite{hopkins2018sub} based on the sum-of-squares family of semidefinite relaxations of the estimator from \cite{lugosi2017sub}. By combining these ideas with a algorithm based on gradient descent, faster mean estimators were subsequently developed in \cite{cfb}. Perhaps surprisingly, this line of work shows as long as the variance of the random vector exists, neither statistical nor computational efficiency is necessarily sacrificed when estimating $\mu$.  In particular, the dependence on $d, n, \delta$ of the confidence interval for MoM estimators, when the samples have bounded second moments, exactly matches the optimal dependence on $d, n, \delta$, \textit{when the samples $X_i$ are Gaussian}.


When $\alpha<1$, the situation is markedly different. In the one-dimensional case, the (optimal) achievable
radius satisfies \cite{devroye2016sub}:
\begin{equation*}
    r_\delta = O\lprp{\frac{\log 1 / \delta}{n}}^{\frac{\alpha}{1 + \alpha}},
\end{equation*} 
which can be achieved by a univariate MoM-style estimator. Even in one dimension, the lack of a second moment degrades the information-theoretic bound with respect to both $n$ and $\delta$. Unlike the case $\alpha \geq 1$, Gaussian-like confidence intervals cannot be obtained in this regime. Moreover, in $d$ dimensions, there is very little known about the optimal achievable radius save for the fact that one can obtain the following trivial rate by applying the univariate estimator coordinate-wise:\footnote{This follows by the triangle inequality and union bound.}
\begin{equation*}
    r_\delta = \tld{O} \lprp{\sqrt{d} \lprp{\frac{\log 1 / \delta}{n}}^{\frac{\alpha}{1 + \alpha}}}.
\end{equation*}

Two natural questions thus present themselves. First, in the regime where $\alpha < 1$, what is minimax-optimal rate for mean estimation in higher dimensions? Second, can this (hitherto unknown) rate also be achieved in polynomial time?

The primary contribution of the current paper is to present to sharp answers to both of these questions. These answers are contained in the following two theorems, the first of which presents a rate that is achievable by a polynomial-time algorithm and the second of which establishes the optimality of this rate.
\begin{theorem}
    \label{thm:ub_simple}
    Let $\bm{X} = X_1, \dots, X_n$ be iid random vectors with mean $\mu$, satisfying the weak moment assumption (MC) for some known $\alpha > 0$. There is a polynomial-time algorithm which, when given inputs $\bm{X}$ and a target confidence $\delta > 2^{-\frac{n}{16000}}$, returns a point $x^*$ satisfying:
    \begin{equation*}
        \norm{x^* - \mu} \leq  \radthm,
    \end{equation*}
    with probability at least $1 - \delta$.
\end{theorem}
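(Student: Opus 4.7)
The plan is to generalize the Lugosi--Mendelson median-of-means estimator (analyzed for $\alpha = 1$) to the weak-moment regime and make it computationally efficient by adapting the gradient-based framework of \cite{cfb}. We partition the sample into $k = \Theta(\log 1/\delta)$ buckets of size $m = n/k$ and form a robust bucket summary $Y_b$ for each bucket $b$ --- most likely a coordinate-truncated or one-dimensional median-of-means quantity, since the raw bucket mean of $(1+\alpha)$-moment variables is itself heavy-tailed. Call a point $x$ a \emph{valid center} if for every unit vector $v$, at least $0.9 k$ of the summaries satisfy $\abs{\inp{Y_b - x}{v}} \le r^*$, where $r^*$ matches $\radthm$ up to constants. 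The algorithm returns any such point.

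\textbf{Statistical core.} The proof reduces to two claims: (i) any two valid centers $x, y$ lie within $O(r^*)$, and (ii) with probability at least $1 - \delta$ the true mean $\mu$ is itself valid. Claim (i) is a majority-overlap argument: taking $v = (x - y) / \norm{x - y}$, the two $0.9 k$-majorities around $x$ and $y$ intersect in at least $0.8 k$ buckets, so for any bucket in the intersection the triangle inequality gives $\norm{x - y} = \inp{x - y}{v} \le 2 r^*$. For (ii), fix $v$; weak moments (MC) together with subadditivity of $\abs{\cdot}^{1 + \alpha}$ give $\mb{E} \abs{\inp{Y_b - \mu}{v}}^{1 + \alpha} \lesssim m^{-\alpha}$, so each bucket is bad in direction $v$ with constant probability once $r^* \gtrsim (k/n)^{\alpha / (1 + \alpha)}$, and a Chernoff bound over the $k$ independent buckets shows that fewer than $0.1 k$ are bad except with probability $e^{-\Omega(k)} \le \delta$. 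The central difficulty is upgrading this to a uniform statement in $v$ with only the additive $\sqrt{d/n} + (d/n)^{\alpha / (1 + \alpha)}$ penalty: a crude $\epsilon$-net over the sphere would multiply the one-direction rate by $\sqrt{d}$, which is too lossy. We handle this via a Lugosi--Mendelson style small-ball / Rademacher chaining argument, with within-bucket truncation at an $\alpha$-dependent threshold $\tau$ chosen so that the truncated variables have variance at most $\tau^{1 - \alpha}$; balancing truncation bias against the resulting Gaussian-type uniform deviation produces precisely the three summands of $\radthm$.

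\textbf{Algorithmic realization and main obstacle.} Finding a valid center is a nonconvex feasibility problem, but following \cite{hopkins2018sub,cfb} it admits a sum-of-squares / SDP relaxation whose feasible solutions either certify the current iterate is valid or provide a descent direction toward $\mu$; a gradient descent then converges in polynomially many steps, with essentially the analysis of \cite{cfb} once the statistical lemma above is supplied. The principal obstacle is precisely this uniform-over-directions lemma: it must simultaneously deliver the tight dimensional scaling and admit a low-degree SoS certificate so that each per-iterate optimization remains polynomial time. Producing such a certifiable concentration inequality for processes driven by random variables possessing only a $(1 + \alpha)$-th moment --- rather than the subgaussian or bounded-variance settings handled in prior work --- is the key new ingredient the remainder of the proof must supply.
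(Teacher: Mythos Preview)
Your plan has the right skeleton --- truncate, bucket, analyze the \cite{cfb} SDP, run gradient descent --- but the paper organizes the truncation differently, and that reorganization is precisely what dissolves the obstacle you flag at the end.

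You propose within-bucket (or coordinate-wise) truncation and then worry that the resulting uniform-over-directions lemma must ``admit a low-degree SoS certificate'' for $(1+\alpha)$-moment data. The paper instead truncates \emph{globally, before bucketing}: it first computes a crude $O(\sqrt{d})$-accurate estimate $x^\dagger$ (\cref{alg:inite}, a step your proposal omits entirely), then discards every $X_i$ with $\norm{X_i - x^\dagger} > \tau$ for a carefully chosen $\alpha$-dependent threshold (\cref{alg:part}), and only then forms ordinary bucket averages $Z_i$ from the survivors. Because the surviving data are bounded, the $Z_i$ have a genuine covariance matrix $\Lambda$, and \cref{lem:rst_bound} bounds the SDP radius $r^*$ by $O\bigl(\sqrt{\Tr\Lambda/k} + \beta^{1/(1+\alpha)}\bigr)$. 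The first term is handled exactly as in \cite{cfb} via Nesterov's $2$-to-$1$ norm bound plus Ledoux--Talagrand contraction and bounded differences (\cref{lem:tto,lem:exp,lem:conc}); the second comes from \cref{lem:srv}. No new certifiable concentration for weak-moment processes is required: the SDP and its analysis are \emph{identical} to \cite{cfb}. All of the weak-moment content is pushed into three scalar calculations --- bounding $\Tr\Lambda$, $\beta$, and the truncation bias $\norm{\tld\mu - \mu}$ --- handled by \cref{lem:lmom,lem:mnshft,lem:trsig,lem:part}.

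So the gap is not that your outline is wrong, but that your framing of the ``principal obstacle'' points you toward a harder problem than the one actually solved. The initial-estimate step is also essential and missing from your plan: without a center you cannot prune, and pruning is what buys finite variance and hence access to the existing finite-covariance machinery unchanged.
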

\cref{thm:ub_simple} is based on a two-stage estimation procedure. In the first step, the set of inputs $\bm{X}$ is truncated to discard samples that are too far from the empirical centroid of $\bm{X}$. The second step uses an estimation-to-testing framework for heavy-tailed estimation \citep{cfb} by first setting up a testing problem which decides if a candidate mean is close to the true mean $\mu$ and, subsequently, using this procedure to improve the estimate. By iterating this procedure, we eventually converge to a good approximation of the true mean.

Our second main result is a matching minimax lower bound establishing the optimality of the rate in \cref{thm:ub_simple}.
\begin{theorem}
    \label{thm:lb_simple}
    Let $n > 0$ and let $\delta \in \lprp{e^{-\frac{n}{4}}, \frac{1}{4}}$. Then there exists a set of distributions $\mc{F}$ over $\mb{R}^d$ such that each $\mc{D} \in \mc{F}$ obeys the weak moment condition (MC) for some $\alpha > 0$, and any estimator $\hat{\mu}$ satisfies:
    \begin{equation*}
        \mb{P}_{\mc{D} \in \mc{F}} \lbrb{\norm{\hat{\mu} (\bm{X}) - \mu(\mc{D})} \geq \frac{1}{24}\cdot\max \lprp{\lprp{\frac{d}{n}}^{\frac{\alpha}{1 + \alpha}}, \sqrt{\frac{d}{n}}, \lprp{\frac{\log 2 / \delta}{n}}^{\frac{\alpha}{1 + \alpha}}}} \geq \delta,
    \end{equation*}
    where the data $\bm{X}$ are generated iid from $\mc{D}$. 
\end{theorem}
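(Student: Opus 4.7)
The plan is to establish the lower bound term by term, since the maximum of three quantities reduces to exhibiting a separate distribution family witnessing each. The three terms correspond to (i) the Gaussian-like rate $\sqrt{d/n}$, (ii) the confidence-dependent heavy-tailed rate $(\log(2/\delta)/n)^{\alpha/(1+\alpha)}$, and (iii) the dimension-dependent heavy-tailed rate $(d/n)^{\alpha/(1+\alpha)}$. The first follows from a classical $d$-dimensional Gaussian packing argument, the second from a one-dimensional two-point Le~Cam reduction, and the third---the main technical step---requires a multidimensional Fano argument built on a heavy-tailed atomic distribution.

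For (i), I would take the Gaussian family $\{N(\theta, c^2 I_d) : \theta \in \mc{T}\}$, where $c$ is an absolute constant chosen so that $\mb{E}_{Z \sim N(0, c^2 I)} \abs{\inp{Z}{v}}^{1+\alpha} \leq 1$ (this is a bounded Gaussian moment), and $\mc{T}$ is a Gilbert--Varshamov packing of the sphere of radius $\Theta(\sqrt{d/n})$ in $\mb{R}^d$ with $|\mc{T}| \geq 2^{\Omega(d)}$ and constant-fraction pairwise separation. The closed-form Gaussian KL $\|\theta - \theta'\|^2/(2c^2)$ plugs into Fano's inequality and yields constant error probability (hence $\geq \delta$) at separation $\Omega(\sqrt{d/n})$.

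For (ii), I would take the one-dimensional two-point family $P_0 = (1-p)\delta_0 + p\delta_R$ and $P_1 = \delta_0$, with $R = p^{-1/(1+\alpha)}$ so that the $(1+\alpha)$-th central moment of $P_0$ is bounded by a constant. The means differ by $pR = p^{\alpha/(1+\alpha)}$. Since $\mathrm{TV}(P_0, P_1) = p$, the product-coupling tensorization gives $1 - \mathrm{TV}(P_0^n, P_1^n) \geq (1-p)^n$, so the choice $p = \Theta(\log(1/\delta)/n)$ (valid because $\delta \in (e^{-n/4}, 1/4)$ ensures $p \leq 1/2$) yields $\mathrm{TV}(P_0^n, P_1^n) \leq 1 - 2\delta$. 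Le~Cam's two-point lemma then forces error at least $pR/2 \asymp (\log(1/\delta)/n)^{\alpha/(1+\alpha)}$ with probability at least $\delta$.

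For (iii), for each $\theta \in \{\pm 1\}^d$ define $P_\theta$ on $\{0\} \cup \{\pm r e_i\}_{i \in [d]}$ by $P_\theta(\{0\}) = 1 - q$ and $P_\theta(\{\pm r e_i\}) = q(1 \pm \theta_i \epsilon)/(2d)$. Direct computation gives $\mu_\theta = (qr\epsilon/d)\theta$. For any unit $v$, H\"older's inequality gives $\sum_i \abs{v_i}^{1+\alpha} \leq d^{(1-\alpha)/2}$, so the $(1+\alpha)$-th central moment is bounded by $O(qr^{1+\alpha} d^{-(1+\alpha)/2})$ uniformly in $v$; a per-coordinate computation shows $\mathrm{KL}(P_\theta \,\|\, P_{\theta'}) \leq O((q\epsilon^2/d) \cdot d_H(\theta, \theta'))$. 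Extracting a Gilbert--Varshamov packing of $\{\pm 1\}^d$ of size $2^{\Omega(d)}$ with pairwise Hamming distance $\Omega(d)$, and choosing $q = \Theta(d/n)$, $\epsilon$ a small absolute constant, and $r$ saturating the moment bound, Fano's inequality yields constant error probability while the minimum pairwise mean separation equals $\Theta((d/n)^{\alpha/(1+\alpha)})$. The main obstacle is the joint parameter balancing of $(q, r, \epsilon)$: they must simultaneously satisfy the weak moment bound uniformly in $v$, keep the per-sample KL small enough relative to the $\log$-packing size for Fano, and produce the target mean separation.
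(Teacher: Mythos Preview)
Your proposal is correct, and your argument for term (iii) takes a genuinely different route from the paper's.

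For the dimension-dependent term $(d/n)^{\alpha/(1+\alpha)}$, the paper indexes its family by size-$d/2$ subsets $S\subset[d]$ and places mass only on $\{0\}\cup\{c\,e_i:i\in S\}$. Because distinct $S$ give distributions with \emph{non-overlapping} supports, pairwise KL (and any $f$-divergence dominating TV) is infinite, and a standard Fano argument is vacuous; the paper therefore bypasses information-theoretic bounds entirely and analyzes the posterior over $S$ directly, showing that with constant probability too few coordinates are ``revealed'' for any estimator to pin down $S$. Your hypercube construction $P_\theta(\pm r e_i)=q(1\pm\theta_i\epsilon)/(2d)$ is engineered to have \emph{common} support across all $\theta$, which keeps the pairwise KL finite at $O(q\epsilon^2)$ and lets a textbook Gilbert--Varshamov/Fano reduction go through. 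The parameter balance you describe ($q=\Theta(d/n)$, $\epsilon$ a small constant, $r$ saturating the moment bound $qr^{1+\alpha}d^{-(1+\alpha)/2}\le 1$) is correct and yields the right separation $qr\epsilon/\sqrt{d}=\Theta((d/n)^{\alpha/(1+\alpha)})$; note only that this construction needs $q\le 1$, i.e.\ $n\gtrsim d$, but in the complementary regime the $\sqrt{d/n}$ term dominates anyway.

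For term (i) the paper simply reuses its construction at $\alpha=1$ rather than invoking Gaussians, and for term (ii) it cites the one-dimensional lower bound of Devroye et al., which is essentially your two-point Le~Cam argument. Your approach is more modular and stays within the standard estimation-to-testing toolkit; the paper's posterior argument is more bespoke but handles (i) and (iii) with a single construction.
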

The main challenge in proving \cref{thm:lb_simple} lies in obtaining tight dependence on dimension. The proof begins by using a standard reduction from estimation to testing for proving minimax rates \citep[see, for example,][Chapter 15]{wainwright}. We then avoid traditional Fano-style information-theoretic approaches, however, in establishing difficulty of the testing problem. We instead take a Bayesian approach and use a carefully chosen set of discrete distributions to instantiate the testing problem, allowing us to establish a sharp dependence on dimension in our lower bound, once various technical challenges are surmounted.
 

Together, \cref{thm:ub_simple} and \cref{thm:lb_simple} have the following implications for the problem of mean estimation without a variance:
\begin{itemize}
    \item In the case in which the failure probability $\delta$ is a constant, our upper and lower bounds simplify to $O(\sqrt{d / n} + (d / n)^{\alpha / (1 + \alpha)})$. Interestingly, \cref{thm:ub_simple} and \cref{thm:lb_simple} reveal the existence of a phase transition in the estimation rate when $n \asymp d$---the estimation rate is dominated by $\sqrt{\frac{d}{n}}$ when $n \lesssim d$ and $(\frac{d}{n})^{\alpha/(1+\alpha)}$ when $n \gtrsim d$.
    \item While it is established in \cite{devroye2016sub} that it is impossible to obtain subgaussian rates in this setting even in one dimension, our results reveal a decoupling between the terms depending on the failure probability and the dimension that parallels the finite-variance case (where $\alpha = 1$).
    \item Finally, our results also extend to the more general problem of mean estimation under adversarial corruption, which has received much recent attention in both the computer science and statistics communities. In this setting, an adversary is allowed to inspect the data points and \emph{arbitrarily} corrupt a fraction $\eta$ of them. We recover the mean up to an error of $O(\eta^{\alpha / (1 + \alpha)})$ which is information-theoretically optimal (\cref{thm:rob_lb}). Furthermore, as a consequence of \cref{thm:lb_simple}, our sample complexity of $O(d / \eta)$ is also optimal.
\end{itemize}

The main technical challenge in establishing our upper bound is the analysis of the  estimation-to-testing framework of \cite{cfb} in the weak-moment setting. The analysis in \cite{cfb} makes critical use of the decomposition of the variance of sums of independent random variables which does not hold in our setting. This allows tight control of the \emph{second} moments of $\sum_{i = 1}^m X_i$ and $\norm{X - \mu}$, which are crucial to that analysis. Despite the lack of such decompositions for weak moments, we establish tight control over the appropriate quantities allowing us to establish our optimal recovery guarantees.

Similarly, the presence of weak moments also complicates the task of establishing a matching lower bound with tight dependence on the dimension $d$. The main difficulty is in proving the optimality of the dimension-dependent term, $(d / n)^{\alpha / (1 + \alpha)}$. For the specific case where $\alpha = 1$, the lower bound may be proved within the estimation-to-testing framework by utilizing a distribution over a well-separated collection of Gaussian distributions. However, this approach fails for the weak-moment mean estimation problem; indeed, hypercontractivity properties of Gaussian distributions ensure a bounded variance leading to a lower bound that scales as $1 / \sqrt{n}$ as opposed to the slower rate $n^{- \alpha / (1 + \alpha)}$. To prove our lower bound, we instead use a collection of carefully chosen distributions with discrete supports whose means are separated by $O((d / n)^{\alpha / (1 + \alpha)})$. Further challenges arise at this point---if we follow the standard path of bounding the complexity of the testing problem in terms of pairwise $f$-divergences between distributions in the hypothesis set, we obtain vacuous bounds. We instead directly analyze the posterior distribution obtained from the framework and show that random independent samples from the posterior tend to be well separated, yielding our tight lower bound.
\section{Related Work}
\label{sec:rel_work}

There has been much interest in designing information-theoretically optimal estimators for fundamental inferential tasks under minimal assumptions on the distributions generating the data \cite{NemYud83,jerrum1986random,alon1999space,lugosi2017sub,lugosi2018risk,hopkins2018sub,cfb,chkrt,trimmed,deplecmean,fredhtmean,depersin2020spectral,depersin2020robust,htsurvey}. In the one-dimensional setting, estimators achieving the information theoretically-optimal \emph{subgaussian} rate were obtained in the seminal work of \cite{alon1999space,jerrum1986random,NemYud83}. In recent years, focus has shifted towards the high-dimensional setting where one aims for optimal recovery error in terms of the number of samples $n$, the dimension $d$, and the failure probability $\delta$, without making strong distributional assumptions such as Gaussianity. As a consequence of this effort, information-theoretically optimal estimators have been developed for mean estimation \cite{lugosi2017sub}, linear regression \cite{lugosi2018risk} and covariance estimation \cite{mzcovariance}. However, the estimators proposed in these works lack computationally efficient algorithms to compute them. The first computationally efficient estimator was proposed by \cite{hopkins2018sub} and its runtime and analysis were subsequently improved in \cite{cfb,deplecmean,fredhtmean}. Subsequently, improved algorithms have been devised for linear regression and covariance estimation \cite{chkrt}. Most recently, \cite{depersin2020spectral} extended the approach of \cite{fredhtmean} to linear regression, improving on \cite{chkrt} in settings where the covariance matrix of the data-generating distribution is known. We direct the interested reader to the survey \cite{htsurvey} and the references therein for more detailed discussion of this line of research. Note that the optimal recovery guarantee obtainable in all these settings is the subgaussian rate. This is provably not possible in the weak-moment scenario even in the one-dimensional case as evidenced by our lower bound.

Another approach towards achieving distributional robustness which has received much attention in the computer science community is robust estimation under a contamination model. In this setting, an adversary is allowed to inspect a set of data points generated from a well-behaved distribution and can arbitrarily corrupt a fraction $\eta$ of them according to their choosing. Broadly speaking, the primary goal of this field is to obtain optimal recovery of the underlying parameter as a function of the corruption factor $\eta$ as opposed to achieving optimal dependence on $n, d, \delta$. Starting with the foundational works of \cite{huber64,Tuk60}, which obtain information-theoretically optimal (albeit computationally intractable) estimators, computationally efficient estimators are now known for a range of statistical estimation problems in various settings \cite{lai2016agnostic,diakonikolas2016robust,charikar2017learning,steinhardt2017resilience,diakonikolas2017being,diakonikolas2018list,kothari2018robust,diakonikolas2018robustly,DBLP:conf/nips/DongH019,cheng2019high}. Since the literature of this field is vast, we restrict ourselves to the specific setting of mean estimation and direct the reader to \cite{diakonikolas2019recent} for more context on these developments. For the mean estimation problem under adversarial corruption, this line of work has resulted in estimators which succeed with constant probability, say $2/3$, and achieve the optimal recovery error of $\sqrt{\eta}$ assuming the data is drawn from a distribution with finite covariance \cite{DBLP:conf/nips/DongH019,cheng2019high,deplecmean}. In addition, \cite{trimmed,deplecmean} achieve this rate along with the optimal dependence on $\delta$ in the recovery guarantees. A corollary of our work extends these results to the setting where the covariance matrix is not defined. 
\section{Algorithm}
\label{sec:alg}
In this section, we describe our algorithm for mean estimation problem in the setting of the weak moment condition (MC). We build on the approach of \cite{cfb} which operates in the setting where the covariance matrix of the distribution generating the data is defined. However, the absence of second moments complicates the design of our algorithm leading to a more intricate procedure. Concretely, our algorithm is comprised of the following three broad stages:
\begin{enumerate}
    \item Data Pruning: First, we compute an initial crude estimate of the mean which is within $O(\sqrt{d})$ of $\mu$ and then proceed to use this estimate to filter out data points which are far from our estimate. This truncation must be chosen carefully to ensure that the mean of the truncated data points still approximates the true mean well. As a consequence, this truncation is a function of $n$, $d$ and $\alpha$. \cref{alg:inite,alg:part} describe this crude estimation procedure and truncation step in greater detail. Intriguingly, this additional thresholding step (which is not necessary in the $\alpha = 1$ case) is critical to achieve the statistically optimal confidence intervals in this scenario.
    \item Data Batching: In this stage, the data points that survive the truncation procedure in \cref{alg:part} are then divided into $k$ bins and mean estimates are computed by averaging the set of points in each bin. The number of bins is chosen depending on the desired failure probability, $\delta$. The precise setting of parameters is described in \cref{alg:loce}.
    \item Median Computation: Finally, the bucket estimates, $Z_i$, produced in the previous stage are aggregated to produce our final estimate of the mean. The procedure to do this follows along the testing-to-estimation framework for robust estimation explored in \cite{cfb}. Here, one first designs a procedure to \emph{test} whether a given candidate, $x$, is close to $\mu$. Subsequently, a solution to the testing program is then used to improve the estimate. In this setting, one shows that the testing program can be used to estimate both $\norm{x - \mu}$ and an approximation to $\Delta = \frac{\mu - x}{\norm{x - \mu}}$ which may be used to improve our estimate. \cref{alg:deste,alg:geste} and \cref{alg:gd} display the estimation and tuning components of this stage. The testing program is defined in \ref{eq:mt} and is discussed in more detail subsequently.
\end{enumerate}

\begin{algorithm}[H]
\caption{Mean Estimation}
\label{alg:meste}
\begin{algorithmic}[1]
    \STATE \textbf{Input}: Data Points $\bm{X} \in \mb{R}^{n\times d}$, Target Confidence $\delta$
    \STATE $x^\dagger \gets \text{Initial Mean Estimate}(\{X_1, \dots, X_{n / 2}\})$
    \STATE $\bm{Z} \leftarrow \text{Produce Bucket Estimates}(\lbrb{X_{n / 2 + 1}, \dots, X_n}, x^\dagger, \delta)$
    \STATE $T \gets \nit$
    \STATE $\optx = \text{Gradient Descent}(\bm{Z}, x^\dagger, T)$
    \STATE \textbf{Return: } $\optx$
\end{algorithmic}
\end{algorithm}
\vspace{-.5cm}
\begin{algorithm}[H]
\caption{Gradient Descent}
\label{alg:gd}
\begin{algorithmic}[1]
    \STATE \textbf{Input}: Bucket Means $\bm{Z} \in \mb{R}^{k\times d}$, Initialization $x^\dagger$, Number of Iterations $T$
    
    \STATE $\optx, \xt{0} \leftarrow x^\dagger$ and  $\optd, \dt{0} \leftarrow \infty$

    \FOR{$t = 0:T$}
    \STATE $\dt{t} \leftarrow \text{Distance Estimation}(\bm{Z}, \xt{t})$
    \STATE $\gt{t} \leftarrow \text{Gradient Estimation}(\bm{Z}, \xt{t})$

    \IF {$\dt{t} < \optd$}
        \STATE $\optx \leftarrow \xt{t}$
        \STATE $\optd \leftarrow \dt{t}$
    \ENDIF

    \STATE $\xt{t + 1} \leftarrow \xt{t} + \frac{1}{20} \dt{t}\gt{t}$
    \ENDFOR
    \STATE \textbf{Return: } $\optx$
\end{algorithmic}
\end{algorithm}
\vspace{-.5cm}
\begin{algorithm}[H]
    \caption{Produce Bucket Estimates}
    \label{alg:loce}
    \begin{algorithmic}[1]
        \STATE \textbf{Input}: Data Points $\bm{X} \in \mb{R}^{n\times d}$, Mean Estimate $x^\dagger$, Target Confidence $\delta$
    
        \STATE $\bm{Y} \gets \text{Prune Data} (\bm{X}, x^\dagger)$
        \STATE $m \gets \abs{\bm{Y}}$
        \STATE $k \leftarrow \nbu$
        \STATE Split data points into $k$ buckets with bucket $\mathcal{B}_i$ consisting of the points $X_{(i - 1)\frac{m}{k} + 1}, \dots, X_{i\frac{m}{k}}$
        \STATE $Z_i \leftarrow \mathrm{Mean}(\mathcal{B}_i)\ \forall\ i \in [k]$ and  $\bm{Z} \leftarrow (Z_1, \dots, Z_k)$
        \STATE \textbf{Return: } $\bm{Z}$ 
    \end{algorithmic}
\end{algorithm}
\vspace{-.5cm}
\begin{algorithm}[H]
    \caption{Initial Mean Estimate}
    \label{alg:inite}
    \begin{algorithmic}[1]
        \STATE \textbf{Input: } Set of data points $\bm{X} = \{X_i\}_{i = 1}^n$
        \STATE $\hat{\mu} \gets \argmin_{X_i \in \bm{X}} \min \lbrb{r > 0: \sum_{j = 1}^n \bm{1} \lbrb{\norm{X_j - X_i} \leq r} \geq 0.6n}$ \label{line:min_init}
        \STATE \textbf{Return: } $\hat{\mu}$
    \end{algorithmic}
\end{algorithm}
\vspace{-.5cm}
\begin{algorithm}[H]
    \caption{Prune Data}
    \label{alg:part}
    \begin{algorithmic}[1]
        \STATE \textbf{Input: } Set of data points $\bm{X} = \{X_i\}_{i = 1}^n$, Mean Estimate $x^\dagger$
        \STATE $\tau \gets \max\lprp{\vthr, 100 \sqrt{d}}$
        \STATE $\mc{C} \gets \{X_i: \norm{X_i - x^\dagger} \leq \tau\}$
        \STATE \textbf{Return: } $\mc{C}$
    \end{algorithmic}
\end{algorithm}
\vspace{-0.5cm}
\noindent
\begin{minipage}[H]{.49\textwidth}
\begin{algorithm}[H]
\caption{Distance Estimation}
\label{alg:deste}
\begin{algorithmic}[1]
    \STATE \textbf{Input}: Data Points $\bm{Z} \in \mb{R}^{k\times d}$, Current point $x$
    \STATE $d^* = \argmax_{r > 0} \ref{eq:mt}(x,r,\bm{Z}) \geq 0.9 k$
    \STATE \textbf{Return: } $d^*$\newline
\end{algorithmic}
\end{algorithm}
\vspace{0.93cm}
\end{minipage}
\hfill \vspace{-.5cm}
\begin{minipage}[H]{.49\textwidth}
\begin{algorithm}[H]
\caption{Gradient Estimation}
\label{alg:geste}
\begin{algorithmic}[1]
    \STATE \textbf{Input}: Data Points $\bm{Z} \in \mb{R}^{k\times d}$, Current point $x$

    \STATE $d^*$ = Distance Estimation$(\bm{Z}, x)$
    \STATE $(v, X) = \ref{eq:mt}(x,d^*,\bm{Z})$
    \STATE $g \gets \text{Top Singular Vector} (X_v)$
    \STATE \textbf{Return: }$g$
\end{algorithmic}
\end{algorithm}
\vspace{.5cm}
\end{minipage}

As in \cite{hopkins2018sub,cfb}, the following polynomial optimization problem and its semidefinite relaxation play a key role in our subsequent analysis. Intuitively, given a test point, $x$, the program searches for a direction (denoted by $v$) such that a large fraction of the bucket estimates, $Z_i$, are far away from $x$ along $v$. Formally, the polynomial optimization problem, parameterized by $x$, $r$ and $\bm{Z}$, is defined below:
\begin{gather*}
    \max \sum_{i = 1}^k b_i \\
    \text{s.t } b_i^2 = b_i \\
    \norm{v}^2 = 1 \\
    b_i (\inp{v}{Z_i - x} - r) \geq 0 \ \forall i \in [k] \tag{\textbf{MTE}} \label{eq:mte}.
\end{gather*}
The binary variables $b_i$ indicates whether the $i^{th}$ bucket mean $Z_i$ is far away along $v$. Unfortunately, the binary constraints on $b_i$, the restriction of $v$ and the final constraint make this problem nonconvex and there are no efficient algorithms known to compute it. Accordingly, we work with the semidefinite relaxation defined as follows:
\begin{gather*}
    \max \sum_{i = 1}^k X_{1, b_i} \\
    X_{1, b_i} = X_{b_i, b_i} \\
    \sum_{j = 1}^d X_{v_j, v_j} = 1 \\
    \inp{v_{b_i}}{Z_i - x} \geq X_{b_i,b_i}r \ \forall i \in [k]\\
    X_{1,1} = 1\\ 
    X \succcurlyeq 0, \tag{\textbf{MT}} \label{eq:mt}
\end{gather*}
where $v_{b_i} = [X_{b_i, v_1}, \dots, X_{b_i, v_d}]^\top$. The matrix $X \in \psd[(k + d + 1)]$ is symbolically indexed by $1$ and the variables $b_1,\ldots,b_k$ and $v_1,\ldots,v_d$.
We will restrict ourselves to analyzing \ref{eq:mt} and will refer to the program initialized with $x$, $r$ and $\bm{Z}$ as \ref{eq:mt}$(x, r, \bm{Z})$. We will use $(v, X) = \ref{eq:mt}(x, r, \bm{Z})$ to denote the optimal value, $v$, and solution, $X$, of \ref{eq:mt} initialized with $x$, $r$ and $\bm{Z}$. For the sake of clarity, $\ref{eq:mt}(x, r, \bm{Z})$ in the absence of any specified output will refer to the optimal value of $\ref{eq:mt}(x, r, \bm{Z})$. 
\section{Proof Overview}
\label{sec:overview}

In this section, we outline the main steps in proving \cref{thm:ub_simple}, providing full details in the Appendix. For ease of exposition, we restrict most of our attention to those steps of our proof which are complicated by the weaker assumptions used in our work. From \cref{sec:alg}, we see that our estimation procedure is divided into three stages:
\begin{enumerate}
    \item We obtain an initial coarse estimate, $\hat{\mu}$, of $\mu$ (\cref{alg:inite});
    \item We then use $\hat{\mu}$ to prune data points far away from $\mu$ (\cref{alg:part});
    \item Finally, the remaining data points are incorporated into a gradient-descent algorithm to obtain our final estimate (\cref{alg:meste}).
\end{enumerate}
To obtain our tight rates, we crucially require the following correctness guarantees on these three steps, each with high probability: our initial estimate $\hat{\mu}$ is within a radius of $O(\sqrt{d})$ of $\mu$, a large fraction of data points pass the pruning steps in \cref{alg:part}, and finally, a tight analysis on the error of the gradient descent procedure in \cref{alg:meste}. The first two steps are novel to the weak-moment setting and the third step, while explored previously for the case $\alpha = 1$, is complicated here due to the lack of strong decomposition structure in the weak moments. 

To deal with these difficulties, we establish two crucial structural lemmas, proved in \cref{sec:aux}, on distributions satisfying weak-moment conditions. The first is a bound on the $1 + \alpha$ moments of the lengths of such random vectors:

\begin{lemma}
    \label{lem:lmom}
    Let $X$ be a zero-mean random vector satisfying the weak-moment assumption for some $0 \leq \alpha \leq 1$. We have the following bound:
    \begin{equation*}
        \mb{E} [\norm{X}^{1 + \alpha}] \leq \frac{\pi}{2}\cdot d^{\frac{1 + \alpha}{2}}.
    \end{equation*}
\end{lemma}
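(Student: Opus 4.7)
The natural route is to use spherical symmetrization to convert the norm into a directional quantity that the weak-moment condition already controls. For any fixed $x \in \mb{R}^d$ and any $p > 0$, rotational invariance yields the identity
$$\int_{S^{d-1}} \abs{\inp{v}{x}}^p \, d\sigma(v) = \norm{x}^p \cdot C_{p,d}, \qquad C_{p,d} := \mb{E}_{v}[\abs{v_1}^p],$$
where $\sigma$ is the uniform probability measure on the sphere and the expectation is with respect to $v \ts \sigma$. Setting $p = 1+\alpha$, rearranging, taking expectation over $X$, and applying Fubini gives
$$\mb{E}[\norm{X}^{1+\alpha}] = \frac{1}{C_{1+\alpha,d}} \int_{S^{d-1}} \mb{E}[\abs{\inp{v}{X}}^{1+\alpha}] \, d\sigma(v) \leq \frac{1}{C_{1+\alpha,d}},$$
where the final inequality applies the weak-moment hypothesis (MC) pointwise in $v$ and uses that $\sigma$ is a probability measure.

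\textbf{Evaluating the constant.} When $v \ts \sigma$, the coordinate $v_1^2$ follows a Beta$(1/2,(d-1)/2)$ distribution, and a direct Beta-function computation gives
$$C_{1+\alpha,d} = \frac{\Gamma(1 + \alpha/2) \, \Gamma(d/2)}{\sqrt{\pi} \, \Gamma((d+1+\alpha)/2)}.$$
To turn this into the target bound $\frac{\pi}{2} d^{(1+\alpha)/2}$ on its reciprocal, I would invoke log-convexity of $\log \Gamma$, which supplies $\Gamma(d/2 + s) \leq (d/2)^s \Gamma(d/2)$ for $s \in [0,1]$, specialized to $s = (1+\alpha)/2$.

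\textbf{Main obstacle.} After this reduction the problem becomes a one-dimensional numerical verification: one needs $\sqrt{\pi} \cdot 2^{(1+\alpha)/2} \cdot \Gamma(1 + \alpha/2) \geq 2$ uniformly in $\alpha \in [0,1]$. At the endpoints this reads $\sqrt{2\pi} \geq 2$ (at $\alpha = 0$) and $\pi \geq 2$ (at $\alpha = 1$); the left side is in fact monotonically increasing in $\alpha$ on $[0,1]$ (a short digamma calculation shows $\psi(1+\alpha/2) + \log 2 > 0$), so the inequality holds throughout with room to spare. Conceptually the proof is a single-shot spherical symmetrization argument; the only real subtlety is pinning down the specific constant $\pi/2$, which is convenient rather than tight.
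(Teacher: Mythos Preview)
Your proof is correct but proceeds along a different line from the paper's. You integrate over the uniform measure on the sphere, invoke the exact Beta identity for $\mb{E}_v[\abs{v_1}^{1+\alpha}]$, and then reduce to a Gamma-function inequality that you verify via log-convexity and a digamma estimate. The paper instead uses a Gaussian randomization: writing $\norm{X} = \sqrt{\pi/2}\,\mb{E}_g\abs{\inp{X}{g}}$ for $g \sim \mc{N}(0,I_d)$, pushing the $(1+\alpha)$-power inside by Jensen, applying the weak-moment hypothesis to the unit vector $g/\norm{g}$, and finishing with $\mb{E}\norm{g}^{1+\alpha} \leq (\mb{E}\norm{g}^2)^{(1+\alpha)/2} = d^{(1+\alpha)/2}$. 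The Gaussian route is shorter and avoids any explicit special-function bookkeeping (Jensen absorbs all the work), while your spherical route is more exact in spirit---no Jensen loss in the main step---at the cost of the Gamma/digamma verification at the end. Both land on the same constant $\pi/2$; the paper's argument is slicker, yours is more self-contained once the Beta moment is known.
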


As we will see, this lemma is crucial in all three steps of our analysis. Note that the upper bound obtained by the lemma is tight up to a small constant factor. (A standard Gaussian random vector yields an upper bound of $d^{(1 + \alpha) / \alpha}$). The proof follows by first considering independent random Gaussian projections of the random vector along with an application of Jensen's inequality.

The second key lemma is a bound on the $1 + \alpha$ moments of the sum of random variables satisfying weak-moment assumptions.  This lemma plays a key role in obtaining tight bounds on the accuracy of the gradient-descent procedure in \cref{alg:meste}:

\begin{lemma}
    \label{lem:srv}
    Let $X_1, \dots, X_n$ be $n$ mean-zero i.i.d. random variables satisfying the following bound, for some $0 \leq \alpha \leq 1$:
    \begin{equation}
        \mb{E} [\abs{X_i}^{1 + \alpha}] \leq 1.
    \end{equation}
    We have:
    \begin{equation}
        \mb{E} \lsrs{\abs*{\sum_{i = 1}^n X_i}^{1 + \alpha}} \leq 2n.
    \end{equation}
\end{lemma}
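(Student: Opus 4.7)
The statement is (a slightly weakened version of) the classical von Bahr--Esseen inequality for sums of independent mean-zero random variables. My plan is to prove it directly by induction on $n$, with the inductive step driven by a one-variable ``parallelogram-like'' inequality that handles the lack of a second moment.

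First, I would set $p = 1 + \alpha \in [1, 2]$ and write $S_n = \sum_{i=1}^n X_i$. The base case $n = 1$ gives $\mathbb{E}[|X_1|^p] \leq 1 \leq 2n$ immediately. For the inductive step, writing $S_n = S_{n-1} + X_n$ and conditioning on $S_{n-1}$, the heart of the argument is the elementary scalar inequality
\begin{equation*}
    |a + b|^p \;\leq\; |a|^p \;+\; p\,|a|^{p-1}\operatorname{sgn}(a)\,b \;+\; 2|b|^p, \qquad \text{for all } a, b \in \mathbb{R},\ p \in [1, 2].
\end{equation*}
Given this, I would apply it pointwise with $a = S_{n-1}$ and $b = X_n$, take conditional expectation with respect to $S_{n-1}$, and use $\mathbb{E}[X_n \mid S_{n-1}] = \mathbb{E}[X_n] = 0$ (by independence and the mean-zero assumption) to kill the linear middle term. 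This yields
\begin{equation*}
    \mathbb{E}[|S_n|^p] \;\leq\; \mathbb{E}[|S_{n-1}|^p] \;+\; 2\,\mathbb{E}[|X_n|^p] \;\leq\; 2(n-1) \;+\; 2 \;=\; 2n,
\end{equation*}
completing the induction.

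The only real work is verifying the scalar inequality. I would proceed by fixing $a \geq 0$ (the case $a < 0$ follows by flipping signs of both $a$ and $b$) and defining
\begin{equation*}
    f(b) \;=\; (a + b)_{+}^p + (-(a+b))_{+}^p \;-\; a^p \;-\; p\,a^{p-1}\,b \;-\; 2|b|^p,
\end{equation*}
showing $f(b) \leq 0$ for all $b$. Note that $f(0) = 0$ and $f'(0) = 0$. The strategy is case analysis on the sign/magnitude of $b$ relative to $a$: when $|a + b| \geq |b|$, the second derivative $f''(b) = p(p-1)(|a+b|^{p-2} - 2|b|^{p-2})$ is nonpositive since $p - 2 \leq 0$, so $f$ is locally concave and $f(b) \leq f(0) + f'(0)b = 0$. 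The remaining range (where $b$ has the opposite sign from $a$ and $|b| > |a|$) I would handle by a direct estimate, for instance rewriting $a + b = -(|b| - a)$ and bounding $(|b| - a)^p - a^p \leq |b|^p - p a^{p-1}|b| \cdot (\text{something})$ via convexity of $t \mapsto t^p$; the constant $2$ on the $|b|^p$ term is comfortable enough to absorb the slack in this region.

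The main obstacle is thus purely technical: verifying the scalar inequality cleanly across all regions. Everything else in the argument—the induction, the use of independence to kill the linear term, and the application of the hypothesis $\mathbb{E}[|X_i|^p] \leq 1$—is routine. No appeal to moments higher than $p$ is made at any point, which is precisely what makes the inequality work in the weak-moment regime.
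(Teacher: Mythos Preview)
Your approach is essentially the same as the paper's: both telescope $\mathbb{E}[|S_n|^p]$ as a sum of increments, use independence and the mean-zero assumption to kill the first-order term $p|S_{i-1}|^{p-1}\operatorname{sgn}(S_{i-1})X_i$, and then reduce everything to a scalar remainder bound of the form $|a+b|^p - |a|^p - p|a|^{p-1}\operatorname{sgn}(a)\,b \le C|b|^p$.

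The one substantive difference is in how that scalar inequality is established. You propose a second-derivative/case-analysis argument, and you correctly flag that the region where $b$ has the opposite sign from $a$ and $|b|$ is comparable to or larger than $|a|$ is not covered by the concavity argument (indeed $f''$ blows up near $b = -a$ when $p<2$). The paper sidesteps this case analysis entirely: it writes the remainder as $\int_{a}^{a+b}\bigl(f'(x)-f'(a)\bigr)\,dx$ and invokes a one-line H\"older-continuity lemma for $g(x)=\operatorname{sgn}(x)|x|^{\alpha}$, namely $\max_x\bigl(g(x+h)-g(x)\bigr)=2^{1-\alpha}h^{\alpha}$ for $h\ge 0$, which handles all sign configurations uniformly and immediately yields the bound. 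That route is cleaner than your sketch and would close the gap you left in the ``remaining range.'' Otherwise, the arguments are the same.
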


The proof of this 
lemma uses techniques employed previously for establishing the Nemirovskii inequalities for Banach spaces but crucially hold even when the covariance of the matrices are not defined \citep{nemineq}. 
We now sketch the argument establishing guarantees on the first two steps of the algorithm. Firstly, from \cref{lem:lmom}, we have that most of the sample sample data points are within a radius of $O(\sqrt{d})$ of $\mu$ with high probability. Therefore, the value of the minimizer in Line~\ref{line:min_init} of \cref{alg:inite} is $O(\sqrt{d})$ (by simply picking any of the data points close to $\mu$) and, furthermore, at least one of the points within $O(\sqrt{d})$ of $\hat{\mu}$ must be at a distance at most $O(\sqrt{d})$ from $\mu$ as most data points are close to $\mu$. This establishes the required correctness guarantees on \cref{alg:inite} which is formalized in \cref{lem:init}. For the second step, we condition on the success of the first step and note that our threshold, $\tau$, is chosen such that all points within $O(\sqrt{d})$ of $\mu$ are within $\tau$ of $\hat{\mu}$. Another application of \cref{lem:lmom} now ensures that most data points pass this threshold, establishing correctness for the second step of our procedure. This is outlined in \cref{lem:part}. We devote the following subsection to the final and most technical step in our analysis.

\subsection{Gradient Descent Analysis}
\label{ssec:gd_main}
In this section we sketch the main steps in the analysis of the gradient-descent procedure used in \cref{alg:meste}. Throughout this subsection, we assume that the previous two steps of the procedure are successful; that is, $\hat{\mu}$ constructed as part of \cref{alg:part} is within $O(\sqrt{d})$ of $\mu$ and as a consequence at least $\Omega (n)$ points are used to construct the bucket estimates. Now, let $\{Z_i\}_{i = 1}^k$ denote the bucket estimates produced as part of \cref{alg:loce} and let $\tld{\mu} = \mb{E} [Z_i]$. From prior work \cite{cfb}, the estimate returned by \cref{alg:meste} is within a radius of $O(r^*)$ of $\tld{\mu}$, where:
\begin{equation*}
    r^* \coloneqq \min \lbrb{r > 0: \ref{eq:mt} (\tld{\mu}, r, \bm{Z}) \leq 0.05k}.
\end{equation*}
Therefore, the error of our estimate may be upper bounded by the sum of two terms: the first is the degree to which $\tld{\mu}$ approximates $\mu$ and the second is an upper bound on $r^*$. We will see that there is an inherent tradeoff between these two terms---by picking the threshold $\tau$ in \cref{alg:part} to be extremely large, $\tld{\mu}$ may be an arbitrarily good approximation of $\mu$ but our bound on $r^*$ may be poor. We now state a  structural lemma capturing the tradeoff between $r^*$ and $\tau$.
\begin{lemma}
    \label{lem:rst_bound}
    Let $\bm{Z} = \{Z_i\}_{i = 1}^k$ be $k$ iid random vectors with mean $\tld{\mu}$ and covariance matrix $\Lambda$.  Suppose that $\mb{E} \lsrs{\abs{\inp{v}{Z_i - \tld{\mu}}}^{1 + \alpha}} \leq \beta$ for all $\norm{v} = 1$. We have:
    \begin{equation*}
        r^* \leq 1000\lprp{\sqrt{\frac{\Tr{\Lambda}}{k}} + \beta^{1 / (1 + \alpha)}} \text{ where } r^* \coloneqq \min \{r > 0: \ref{eq:mt} (\tld{\mu}, r, \bm{Z}) \leq 0.05k\},
    \end{equation*}
    with probability at least $1 - e^{- k / 800}$.
\end{lemma}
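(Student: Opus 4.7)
The plan is to show that for any feasible $X$ of \ref{eq:mt}$(\tld{\mu}, r, \bm{Z})$ with value $S := \sum_i X_{b_i, b_i}$, one has $S \le 0.05k$ with probability at least $1 - e^{-k/800}$ once $r \ge 1000(\sqrt{\Tr\Lambda/k} + \beta^{1/(1+\alpha)})$. Throughout I will work with the Gram representation $X_{pq} = \inp{\xi_p}{\xi_q}$ of the PSD matrix $X$. Writing $s_i := X_{b_i,b_i}$ and $Y_i := Z_i - \tld{\mu}$, the SDP's constraints translate to $\norm{\xi_1} = 1$, $\norm{\xi_{b_i}}^2 = \inp{\xi_1}{\xi_{b_i}} = s_i$, $\sum_j \norm{\xi_{v_j}}^2 = 1$, and $\inp{u_i}{Y_i} \ge s_i r$ for the vector $u_i \in \mb{R}^d$ with entries $u_{i,j} = \inp{\xi_{b_i}}{\xi_{v_j}}$.

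Decomposing $\xi_{b_i} = s_i \xi_1 + \zeta_i$ and $\xi_{v_j} = \alpha_j \xi_1 + \eta_j$ into components parallel and perpendicular to $\xi_1$ yields $u_i = s_i \alpha + \delta_i$, where $\alpha = (\alpha_j)_j \in \mb{R}^d$ satisfies $\norm{\alpha} \le 1$ and the residual vectors $\delta_i = (\inp{\zeta_i}{\eta_j})_j$ satisfy $\sum_i \norm{\delta_i}^2 \le (1 - \norm{\alpha}^2) S$. Summing the SDP constraint over $i$ gives the primary inequality
\begin{equation*}
    rS \;\le\; \inp{\alpha}{\textstyle\sum_i s_i Y_i} + \textstyle\sum_i \inp{\delta_i}{Y_i},
\end{equation*}
and Cauchy--Schwarz bounds the residual term by $\sqrt{(1-\norm{\alpha}^2)\, S \sum_i \norm{Y_i}^2}$. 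This inequality cleanly separates a ``direction-aligned'' contribution along the single vector $\alpha$ from an ``isotropic'' residual controlled by $\sum \norm{Y_i}^2$.

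To turn this into a probabilistic bound I truncate outliers at threshold $T = c_1 \beta^{1/(1+\alpha)}$ (with $c_1$ chosen so the tails are small), writing $Y_i^L = Y_i \bm{1}[\norm{Y_i} \le T]$. A Chernoff bound applied to $\bm{1}[\norm{Y_i} > T]$, fed with the tail estimates furnished by \cref{lem:lmom}, shows that the number of heavy indices is at most $0.01k$ with failure probability $e^{-\Omega(k)}$, so their contribution to $S$ is absorbed into the slack. For the residual, a scalar Bernstein argument on $\norm{Y_i^L}^2$ (bounded by $T^2$ with mean $\le \Tr\Lambda$) yields $\sum_i \norm{Y_i^L}^2 \lesssim k\,\Tr\Lambda$, giving a contribution $\sqrt{(1-\norm{\alpha}^2)\,S k\,\Tr\Lambda}$. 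For the direction-aligned term, $\inp{\alpha}{Y_i^L}$ has $(1+\alpha)$-moment at most $\beta$; \cref{lem:srv}, applied to the independent weighted scalars $s_i\inp{\alpha}{Y_i}$, then yields $\mb{E}\lsrs{\abs*{\textstyle\sum_i s_i \inp{\alpha}{Y_i}}^{1+\alpha}} \lesssim S\beta$, whence Markov gives $\abs{\inp{\alpha}{\sum_i s_i Y_i^L}} \lesssim (S\beta)^{1/(1+\alpha)}$ with constant failure probability. Piecing everything together produces a joint inequality of the form $rS \lesssim (S\beta)^{1/(1+\alpha)} + \sqrt{(1-\norm{\alpha}^2)\, S k\,\Tr\Lambda} + 0.01kr$; treating this as a quadratic constraint in $\sqrt{S}$ and solving shows $S \le 0.05k$ whenever $r \ge 1000(\sqrt{\Tr\Lambda/k} + \beta^{1/(1+\alpha)})$. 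A union bound over the above concentration events delivers the claimed failure probability.

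The main obstacle is the uniform-in-$\alpha$ control of the direction-aligned sum: because $\alpha$ is determined by the optimal SDP solution (and hence by the $Y_i$'s), \cref{lem:srv} cannot be invoked pointwise. A naive $\epsilon$-net over the unit ball in $\mb{R}^d$ would introduce an $e^{O(d)}$ factor and destroy the dimension-free rate. The resolution exploits the self-balancing character of the Gram decomposition: when $\norm{\alpha}$ is close to $1$ the SDP reduces essentially to the rank-one case, in which a single-direction bound suffices, whereas when $\norm{\alpha}$ is small the factor $(1-\norm{\alpha}^2)$ multiplying the residual term enlarges that term just enough to absorb the slack coming from direction freedom. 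A secondary technical point is the calibration of the truncation level: too small and the bias $\mb{E}[Y^L]$ drifts; too large and the Bernstein bound on $\sum \norm{Y_i^L}^2$ loosens. The weak-moment-balanced choice $T \asymp \beta^{1/(1+\alpha)}$ is precisely what produces the additive $\beta^{1/(1+\alpha)}$ term in the final rate.
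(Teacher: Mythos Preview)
Your approach has a genuine gap precisely where you flag the ``main obstacle'': the direction-aligned term $\inp{\alpha}{\sum_i s_i Y_i}$. Both $\alpha$ and the weights $s_i$ are functions of the optimal SDP solution and hence of the data, so neither \cref{lem:srv} nor a pointwise Markov bound applies. Your ``self-balancing'' resolution does not actually close the gap: even in the rank-one regime $\norm{\alpha}\approx 1$, the direction $\alpha$ is still data-dependent, so a ``single-direction bound'' is not available---you would still be bounding a supremum over the sphere, which is exactly the issue you identified. And when $\norm{\alpha}$ is small the aligned term does not vanish; shifting weight into the residual factor does not eliminate the need for uniform control over $\alpha$. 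Separately, invoking Markov on the aligned term yields only constant failure probability, which cannot be combined with the other events to reach $e^{-k/800}$.

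The paper's proof takes an entirely different route that sidesteps the supremum-over-directions problem. It first shows (\cref{lem:conc}) that the optimal SDP value has bounded differences of $1$ in each $Z_i$, so McDiarmid gives concentration of order $\sqrt{k}$ around the expectation with failure probability $e^{-k/800}$. The substance is then in bounding the expectation (\cref{lem:exp}): by a change of variables the feasible set of \ref{eq:mt} is embedded into the feasible set of the standard SDP relaxation \ref{eq:tor} of the $2\!\to\!1$ norm of $\bm{Z}-\mathbf{1}\tld{\mu}^\top$, and Nesterov's rounding theorem (\cref{thm:contr}) controls that relaxation by the actual $2\!\to\!1$ norm up to a constant. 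The expected $2\!\to\!1$ norm is then bounded by symmetrization plus Ledoux--Talagrand contraction (\cref{lem:tto}), which is exactly the device that handles the supremum over unit directions without an $\epsilon$-net and without losing a dimension factor. This yields $\mb{E}[\ref{eq:mt}(\tld{\mu},r,\bm{Z})] \le \frac{1}{2r}(5\sqrt{k\Tr\Lambda}+2k\beta^{1/(1+\alpha)})$, so choosing $r$ as in the statement makes the expectation at most $k/40$, and McDiarmid finishes. Your proposal would need a replacement for the Nesterov/contraction step to become a proof.
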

Given this lemma, the main remaining difficulty is in obtaining bounds on $\mb{E}\lsrs{\abs{\inp{Z_i}{v}}^{1 + \alpha}}$, $\Tr(\Lambda)$, and the deviation of $\tld{\mu}$ from $\mu$. Note that from the definition of \cref{alg:loce}, the $Z_i$ are means of truncated data points and hence their covariance matrix is well defined. 

To obtain bounds on $\norm{\tld{\mu} - \mu}$, an application of Markov's inequality and \cref{lem:lmom} establishes that the probability of $X_i$ being truncated in \cref{alg:part} is at most $O(d / n)$. Then, a standard variational argument along with our weak-moment assumption allows us to bound $\norm{\tld{\mu} - \mu}$ (see \cref{lem:mnshft} for more details). 

We obtain a bound on $\Tr{\Lambda}$ by first observing that each $X_i$ used to compute one of the bucket estimates, $Z_j$, is truncated with respect to its distance from $\hat{\mu}$. By an application of the triangle inequality, we infer that all of the $X_i$ used in the computation satisfy $\norm{X_i - \hat{\mu}} \leq \tau + O(\sqrt{d})$. Therefore, to bound $\Tr{\Lambda}$, all we need is a bound on ${\mb{E} [\norm{X_i - \tld{\mu}}^2 \bm{1} \lbrb{\norm{X_i - \hat{\mu}} \leq \tau + O(\sqrt{d})}]}$. Another appeal to \cref{lem:lmom} and a straightforward truncation argument establishes a bound of $O(\sqrt{d / n} + (d / n)^{\alpha / (1 + \alpha)})$ (see \cref{lem:trsig,lem:part}).

For the final term, observe that the $Z_i$ are averages of truncated versions of $X_i$. A simple argument shows that truncated $X_i$ satisfy a weak-moment bound (\cref{lem:mnshft}). Therefore, a direct application of \cref{lem:srv} yields a bound of $O(k / n)$ on $\beta$. Incorporating these bounds into \cref{lem:rst_bound} and the gradient-descent framework for heavy-tailed estimation from \cite{cfb} concludes the proof of \cref{thm:ub_simple}.
\section{Lower Bound}
\label{sec:lb}

In this section, we present a lower bound for heavy-tailed regression which shows that the bound obtained in \cref{thm:ub_simple} is tight. 

For a given dimension $d$, and sample size $n$, we will consider a family of distributions parameterized by size $d / 2$ subsets of $[d]$. That is, we will consider a family of distributions $\mc{F} = \{\mc{D}_S: S \subset [d] \text{ and  } \abs{S} = d / 2\}$. Now, for each particular distribution $\mc{D}_S$, we have $X \ts \mc{D}_S$ as follows:
\begin{equation*}
    X = \begin{cases}
            0, & \text{with probability } 1 - \frac{d}{8n} \\
            n^{\frac{1}{1 + \alpha}}\cdot d^{- \frac{(1 - \alpha)}{2(1 + \alpha)}} \cdot \bm{e_i}, & \text{for $i \in S$ with probability } \frac{1}{4n}.
        \end{cases}
\end{equation*}

We will first show that the distribution $\mc{D}_S$ satisfies the $1 + \alpha$ moment condition. 

\begin{lemma}
    \label{lem:const1pa}
    Let $X \ts \mc{D}_S$ for some $S \subset [d]$ such that $\abs{S} = d / 2$. Then, $X$ satisfies the following:
    \begin{equation*}
        \forall v: \norm{v} = 1: \mb{E} \lsrs{\abs{\inp{v}{X - \mu_S}}^{1 + \alpha}} \leq \frac{1}{2}.
    \end{equation*}
\end{lemma}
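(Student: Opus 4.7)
The plan is to compute the mean, then expand the centered $(1+\alpha)$-moment as a sum over the discrete support of $X$, bound the dominant contribution via Hölder's inequality, and argue that the mean-shift correction is negligible because $X$ vanishes with high probability.

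First, I would compute $\mu_S = \mb{E}[X]$ explicitly. Writing $c := n^{1/(1+\alpha)} d^{-(1-\alpha)/(2(1+\alpha))}$, the distribution assigns mass $1/(4n)$ to $c\bm{e_i}$ for each $i\in S$ and the remaining mass $1 - d/(8n)$ to $0$, so $\mu_S = \frac{c}{4n}\mbf{1}_S$. For a unit vector $v$, setting $m_v := \langle v,\mu_S\rangle$, Cauchy--Schwarz gives $|m_v| \leq \frac{c\sqrt{|S|}}{4n} = \frac{c\sqrt{d/2}}{4n}$. A direct algebraic simplification using the definition of $c$ reveals $|m_v| \leq \frac{1}{4\sqrt{2}} (d/n)^{\alpha/(1+\alpha)}$, which is small in the relevant regime $d \lesssim n$ forced by the requirement $d/(8n)\leq 1$.

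Next, I would expand
\begin{equation*}
    \mb{E}\lsrs{\abs{\inp{v}{X-\mu_S}}^{1+\alpha}} = \lprp{1-\tfrac{d}{8n}}|m_v|^{1+\alpha} + \sum_{i\in S}\tfrac{1}{4n}|cv_i - m_v|^{1+\alpha},
\end{equation*}
and bound the sum using the elementary inequality $(a+b)^{1+\alpha} \leq 2^\alpha(a^{1+\alpha}+b^{1+\alpha})$ to separate the contribution of $cv_i$ from that of $m_v$. The key step is controlling $\sum_{i\in S}|v_i|^{1+\alpha}$ by Hölder's inequality applied with exponents $2/(1+\alpha)$ and $2/(1-\alpha)$, giving $\sum_{i\in S}|v_i|^{1+\alpha} \leq |S|^{(1-\alpha)/2}\norm{v}^{1+\alpha} \leq d^{(1-\alpha)/2}$. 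Plugging this into $\frac{c^{1+\alpha}}{4n}\sum_{i\in S}|v_i|^{1+\alpha}$ and using $c^{1+\alpha} = n\cdot d^{-(1-\alpha)/2}$, the dominant term telescopes to $\tfrac{1}{4}$.

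The remaining terms involve $|m_v|^{1+\alpha}$ multiplied by factors at most $1 + (2^\alpha - 1)\tfrac{d}{8n} \leq 2$. Substituting the bound $|m_v|^{1+\alpha} \leq \frac{1}{(4\sqrt{2})^{1+\alpha}}(d/n)^{\alpha}$ derived above and using $d/n \leq 1$ in the regime of interest, these correction terms contribute at most a small absolute constant (much less than $1/4$). Summing the dominant $\tfrac{1}{4}$ contribution with the correction and verifying the constants gives the desired bound of $\tfrac{1}{2}$.

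The main obstacle will be bookkeeping the various constants so that the dominant term and the mean-shift correction together fit under the $1/2$ threshold. The calculation itself is elementary once one spots that (i) the exponent in $c$ is engineered so that $\frac{c^{1+\alpha}}{4n}\cdot d^{(1-\alpha)/2} = \frac{1}{4}$ and (ii) the low probability $d/(8n)$ of $X$ being nonzero makes the mean $\mu_S$ small enough in $\ell_2$ that subtracting it from $X$ causes only a lower-order perturbation of the uncentered moment.
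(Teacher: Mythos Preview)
Your overall strategy---compute $\mu_S$, expand the centered moment over the discrete support of $X$, and control $\sum_{i\in S}|v_i|^{1+\alpha}$ via H\"older with exponents $\tfrac{2}{1+\alpha},\tfrac{2}{1-\alpha}$---is exactly what the paper does, and your expansion of the expectation is correct. The gap is in the constants. After you apply $(a+b)^{1+\alpha}\leq 2^{\alpha}(a^{1+\alpha}+b^{1+\alpha})$ to $|cv_i-m_v|^{1+\alpha}$, the main piece $\frac{c^{1+\alpha}}{4n}\sum_{i\in S}|v_i|^{1+\alpha}$ still carries the prefactor $2^{\alpha}$; the bound on it is $2^{\alpha}/4$, not $1/4$ as you write. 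At $\alpha=1$ this already equals $1/2$, and the remaining correction $\bigl(1+(2^\alpha-1)\tfrac{d}{8n}\bigr)|m_v|^{1+\alpha}$ is strictly positive for every $d\geq 1$, so your total strictly exceeds $1/2$. Your later appeal to ``$d/n\leq 1$ in the regime of interest'' neither repairs this (the excess at $\alpha=1$ is present for all $d$) nor is justified by the hypotheses, which only force $d\leq 8n$.

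The paper avoids the $2^\alpha$ loss by never invoking the splitting inequality on $|cv_i-m_v|^{1+\alpha}$: it bounds the contribution coming from the nonzero atoms and the contribution attributable to the mean shift each by $1/4$ directly, applying the same H\"older step on $\sum_{i\in S}|v_i|^{1+\alpha}$ in both places and summing to $1/2$. To repair your argument you need to bypass the splitting inequality on the main term---for instance, at $\alpha=1$ one can simply use $\mb{E}|\langle v,X-\mu_S\rangle|^{2}=\mathrm{Var}(\langle v,X\rangle)\leq \mb{E}\langle v,X\rangle^{2}\leq 1/4$ directly, and for $\alpha<1$ one must exploit some slack beyond the crude $2^{\alpha}$ decoupling.
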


\begin{proof}
    We first note that:
    \begin{equation*}
        (\mu_S)_i = \begin{cases}
                        0, & \text{for } i \notin S \\
                        \frac{n^{- \frac{\alpha}{1 + \alpha}}\cdot d^{- \frac{(1 - \alpha)}{2(1 + \alpha)}}}{4}, & \text{otherwise}.
                    \end{cases}
    \end{equation*}
    Let $v$ be such that $\norm{v} = 1$. We have:
    \begin{align*}
        \mb{E} \lsrs{\abs{\inp{v}{X - \mu_S}}^{1 + \alpha}} &= \sum_{i \in S} \frac{1}{4n}\cdot \abs*{v_i\lprp{n^{\frac{1}{1 + \alpha}}\cdot d^{- \frac{(1 - \alpha)}{2(1 + \alpha)}} - \frac{n^{- \frac{\alpha}{1 + \alpha}}\cdot d^{- \frac{(1 - \alpha)}{2(1 + \alpha)}}}{4}}}^{1 + \alpha} \\
        & + \sum_{i \in S} \lprp{1 - \frac{1}{4n}} \cdot \abs*{v_i\lprp{\frac{n^{- \frac{\alpha}{1 + \alpha}}\cdot d^{- \frac{(1 - \alpha)}{2(1 + \alpha)}}}{4}}}^{1 + \alpha}.
    \end{align*}
    For the first term in this sum, we have:
    \begin{align*}
        &\sum_{i \in S} \frac{1}{4n}\cdot \abs*{v_i\lprp{n^{\frac{1}{1 + \alpha}}\cdot d^{- \frac{(1 - \alpha)}{2(1 + \alpha)}} - \frac{n^{- \frac{\alpha}{1 + \alpha}}\cdot d^{- \frac{(1 - \alpha)}{2(1 + \alpha)}}}{4}}}^{1 + \alpha} \leq \sum_{i \in S} \frac{1}{4n} \cdot \abs*{v_i n^{\frac{1}{1 + \alpha}}\cdot d^{- \frac{(1 - \alpha)}{2(1 + \alpha)}}}^{1 + \alpha} \\
        &\qquad = \frac{1}{4} \sum_{i \in S} \abs{v_i}^{1 + \alpha} \cdot d^{-\frac{(1 - \alpha)}{2}} \leq \frac{1}{4} \lprp{\sum_{i \in S} v_i^2}^{\frac{1 + \alpha}{2}} \cdot \lprp{\sum_{i \in S} d^{-1}}^{\frac{1 - \alpha}{2}} \leq \frac{1}{4},
    \end{align*}
    where the second inequality follows from H\"older's inequality. For the second term, we have:
    \begin{align*}
        \sum_{i \in S} \lprp{1 - \frac{1}{4n}} \cdot \abs*{v_i\lprp{\frac{n^{- \frac{\alpha}{1 + \alpha}}\cdot d^{- \frac{(1 - \alpha)}{2(1 + \alpha)}}}{4}}}^{1 + \alpha} &\leq \frac{1}{4}\sum_{i \in S} \abs{v_i}^{1 + \alpha} n^{-\alpha} d^{- \frac{(1 - \alpha)}{2}} \\
        &\leq \frac{1}{4}\sum_{i \in S} \abs{v_i}^{1 + \alpha} d^{- \frac{(1 - \alpha)}{2}} \leq \frac{1}{4},
    \end{align*}
    where the last inequality again follows from H\"older's inequality. Putting the two bounds together, we obtain:
    \begin{equation*}
        \forall v: \norm{v} = 1: \mb{E} \lsrs{\abs{\inp{v}{X - \mu_S}}^{1 + \alpha}} \leq \frac{1}{2}.
    \end{equation*}
\end{proof}

We now prove a lemma that establishes the optimality of Theorem~\ref{thm:ub_simple} in the regime of constant failure probability. We  use the following generative process for the data $\bm{X} = X_1, \dots, X_n$:

\begin{enumerate}
    \item Randomly pick a subset $S$ uniformly from the set $\{T \subset [d]: \abs{T} = d / 2\}$. 
    \item Generate $X_1, \dots, X_n$ iid from the distribution, $\mc{D}_S$.
\end{enumerate}

\begin{lemma}
    \label{lem:cplb}
    Let $(S, \bm{X})$ be generated according to the above process. We have, for any estimator $\hat{\mu} (\bm{X})$,
    \begin{equation*}
        \mb{P}_{S, \bm{X}} \lbrb{\norm{\hat{\mu} (\bm{X}) - \mu_S} \geq \frac{1}{24}\cdot \lprp{\frac{d}{n}}^{\frac{\alpha}{1 + \alpha}}} \geq \frac{1}{4}.
    \end{equation*}
\end{lemma}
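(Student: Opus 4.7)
The plan is a Bayesian two-sample argument: given the data $\bm{X}$, the parameter $S$ has a well-understood posterior, and I draw an auxiliary $S'$ independently from that posterior, then use the triangle inequality together with the exchangeability of $S$ and $S'$ to convert their expected separation into a lower bound on the estimation error of any deterministic $\hat{\mu}$.

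The first step is to identify the posterior. Given $\bm{X}$, let $O \subseteq [d]$ be the set of coordinates at which at least one $X_j$ is non-zero. The likelihood $\mb{P}(\bm{X} \mid S)$ vanishes unless $O \subseteq S$, and on the event $O \subseteq S$ it equals a fixed function of $\bm{X}$ alone (a product of factors $1 - d/(8n)$ and $1/(4n)$, independent of $S$). Hence the posterior of $S$ given $\bm{X}$ is uniform over $\lbrb{T \subseteq [d]: O \subseteq T, \abs{T} = d/2}$; equivalently, $S \setminus O$ is a uniformly random $(d/2 - \abs{O})$-subset of $[d] \setminus O$. Drawing $S'$, given $\bm{X}$, independently from this posterior makes $S, S'$ exchangeable both conditionally and marginally, so the triangle inequality yields
\begin{equation*}
    \mb{P}\lbrb{\norm{\hat{\mu}(\bm{X}) - \mu_S} \geq r} \geq \tfrac{1}{2}\, \mb{P}\lbrb{\norm{\mu_S - \mu_{S'}} \geq 2r}.
\end{equation*}

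Since $\mu_S - \mu_{S'}$ takes values $\pm c$ on each coordinate of $S \triangle S'$ and $0$ elsewhere, where $c = n^{-\alpha/(1+\alpha)} d^{-(1-\alpha)/(2(1+\alpha))}/4$, one has $\norm{\mu_S - \mu_{S'}}^2 = c^2 \abs{S \triangle S'}$; substituting $r = (d/n)^{\alpha/(1+\alpha)}/24$ and simplifying exponents yields $(2r)^2/c^2 = d/9$, so it suffices to show $\mb{P}(\abs{S \triangle S'} \geq d/9) \geq 1/2$. I will establish this via two concentration estimates. First, the number of non-zero samples $n_+ \sim \text{Binomial}(n, d/(8n))$ has mean $d/8$ and $\abs{O} \leq n_+$, so by Markov's inequality (or Chernoff for large $d$) the event $\lbrb{\abs{O} \leq 3d/8}$ has probability at least $2/3$. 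Second, conditional on $\bm{X}$ with $\abs{O} = s$, writing $k = d/2 - s$, $N = d - s$, and $T = \abs{(S \cap S') \setminus O} \sim \text{Hypergeometric}(N, k, k)$, we have $\abs{S \triangle S'} = 2(k - T)$, $\mb{E}[\abs{S \triangle S'} \mid \bm{X}] = 2k(N-k)/N \geq d/5$ whenever $s \leq 3d/8$, and $\text{Var}(\abs{S \triangle S'} \mid \bm{X}) \leq d/4$; Chebyshev's inequality then yields $\mb{P}(\abs{S \triangle S'} < d/9 \mid \bm{X}) = O(1/d)$ on this event. Multiplying the two probabilities closes the bound.

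The main obstacle I expect is the careful bookkeeping of constants. The triangle step loses a factor of two and the separation target $d/9$ is pinned to the specific radius $(d/n)^{\alpha/(1+\alpha)}/24$ via the identity $(2r)^2/c^2 = d/9$; so the thresholds for $\abs{O}$ and for $\abs{S \triangle S'}$ must be tuned consistently to push the overall probability past $1/2$. For very small $d$ the Chebyshev bound degrades, and such corner cases could be handled either by an exact combinatorial estimate of the hypergeometric tail, or by appealing to the other terms in the maximum of \cref{thm:lb_simple}, which dominate when $d$ is small.
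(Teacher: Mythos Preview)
Your proposal is correct and shares the paper's key ingredients: the posterior of $S$ given $\bm{X}$ is uniform over $\{T:\,O\subseteq T,\ \abs{T}=d/2\}$, a Markov bound controls $\abs{O}$, and a Chebyshev bound on a hypergeometric quantity yields the required separation. The reduction step, however, is genuinely different. The paper does a case analysis on the estimator: it picks the candidate $R\supseteq O$ with $\norm{\hat\mu(\bm X)-\mu_R}$ small (if one exists) and shows, via the posterior, that $\abs{S\cap R}\leq 5d/12$ with probability at least $1/2$, which forces $\norm{\mu_S-\mu_R}\geq\frac{1}{12}(d/n)^{\alpha/(1+\alpha)}$ and hence $\norm{\hat\mu-\mu_S}\geq\frac{1}{24}(d/n)^{\alpha/(1+\alpha)}$. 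You instead draw an independent ghost copy $S'$ from the posterior and use exchangeability plus the triangle inequality to eliminate $\hat\mu$ at the outset, reducing the task to bounding $\abs{S\triangle S'}$. Your route is the classical Le Cam two-point device and is arguably cleaner, since the estimator disappears immediately; the paper's route is more hands-on but avoids introducing the auxiliary $S'$ and works with a single hypergeometric (the overlap of one random set with a fixed set rather than two random sets). Two minor points: your variance bound should read $\mathrm{Var}(\abs{S\triangle S'}\mid\bm X)\leq d/2$ rather than $d/4$ (since $4\,\mathrm{Var}(T)\leq 4\cdot k/4=k\leq d/2$), which still suffices; and the small-$d$ caveat you flag is present in the paper's proof as well, where the Chebyshev step also needs $d$ bounded away from a constant.
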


\begin{proof}
    We first define the random variable $Y \coloneqq \sum_{i = 1}^n \bm{1} \lbrb{X_i \neq 0}$. From the definition of the distributions $\mc{D}_S$ we have:
    \begin{equation*}
        \mb{E} \lsrs{Y} = \frac{d}{8}
    \end{equation*}
    Therefore, we have that $Y \leq d / 4$ with probability at least $1 / 2$, by Markov's inequality. We now define the following random set: $T \coloneqq \{i \in [d]: \exists j \in [n]\text{ such that } (X_j)_i \neq 0\}$. We see from the definition of $T$ and $Y$ that $\abs{T} \leq Y$. We have with probability at least $1 / 2$ that $\abs{T} \leq d / 4$. Let $\bm{X}$ be an outcome for which $\abs{T} = k \leq d / 4$. We have by the symmetry of the distribution that:
    \begin{equation*}
        \mb{P} \lbrb{S | \bm{X}} =  \begin{cases}
                                        \frac{1}{\binom{d - k}{d/2 - k}}, &\text{if } T\subset S \text{ and } \abs{S} = d / 2\\
                                        0, &\text{otherwise}.
                                    \end{cases}
    \end{equation*}
    For given $\bm{X}$, define $Z_i = \bm{1} \lbrb{i \in S}$ for $i \notin T$ (For $i \in T$, $Z_i$ is $1$). We have for $Z_i$ and $Z_j$ for distinct $i, j \notin T$:
    \begin{equation*}
        \mb{E} \lsrs{Z_i | \bm{X}} = \mb{E} \lsrs{Z_j | \bm{X}} = \frac{d - 2k}{2(d - k)}.
    \end{equation*}
    Furthermore, we have:
    \begin{align*}
        \mathrm{Cov}(Z_i, Z_j | \bm{X}) &= \frac{(d - 2k)(d - 2k - 2)(d - k) - (d - 2k)^2 (d - k - 1)}{4(d - k)^2 (d - k - 1)} \\
        &= \frac{(d-2k)((d - 2k)(d - k) - 2(d - k) - (d - 2k)(d - k) + (d - 2k))}{4(d - k)^2 (d - k - 1)} \\
        &= \frac{-d (d - 2k)}{4(d - k)^2 (d - k - 1)} < 0.
    \end{align*}
    Now, consider some $R \subset [d]$ such that $\abs{R} = d / 2$ and $T \subset R$. Let $Q = R \setminus T$. For $Q$, we have $\abs{Q} = d / 2 - k$. We have for $S$:
    \begin{equation*}
        \abs{S \cap R} = k + \sum_{i \in Q} Z_i.
    \end{equation*}
    This means that:
    \begin{equation*}
        \mathrm{Var}\lprp{\abs{S \cap R} \mid \bm{X}} = \mathrm{Var}\lprp{\sum_{i \in Q} Z_i \mid \bm{X}} \leq \sum_{i \in Q} \lprp{\frac{d - 2k}{2(d - k)}}^2 \leq \frac{d / 2 - k}{4} = \frac{d}{8}.
    \end{equation*}
    Furthermore, we have that:
    \begin{equation*}
        \mb{E} \lprp{\abs{S \cap R} \mid \bm{X}} = k + \lprp{\frac{d}{2} - k}\cdot \frac{(d - 2k)}{2(d - k)} \leq \frac{d}{4} + \frac{d}{4} \cdot \frac{d}{4 (3d / 4)} = \frac{d}{4} + \frac{d}{12} = \frac{d}{3}.
    \end{equation*}
    Therefore, we have by Chebyshev's inequality that:
    \begin{equation*}
        \mb{P} \lbrb{\abs{S \cap R} \geq \frac{5d}{12}} \leq \frac{1}{2}.
    \end{equation*}

    Note that for any $S_1, S_2$ such that $\abs{S_i} = \frac{d}{2}$ and $\abs{S_1 \cap S_2} \leq \frac{5d}{12}$, we have:
    \begin{equation*}
        \norm{\mu_{S_1} - \mu_{S_2}} \geq \sqrt{2 \cdot \frac{d}{12} \cdot \lprp{\frac{n^{-\frac{\alpha}{1 + \alpha}}\cdot d^{-\frac{1 - \alpha}{2(1 + \alpha)}}}{4}}^2} \geq \frac{1}{12} \cdot \lprp{\frac{d}{n}}^{\frac{\alpha}{1 + \alpha}}.
    \end{equation*}
    Consider any estimator $\hat{\mu}$. Suppose that there exists $R$ such that $T \subset R$, $\abs{R} = d / 2$ and $\norm{\hat{\mu}(\bm{X}) - \mu_R} \leq \frac{1}{24}\cdot \lprp{\frac{d}{n}}^{\frac{\alpha}{1 + \alpha}}$. Then, we have that by the triangle inequality that:
    \begin{equation*}
        \mb{P} \lbrb{\norm{\hat{\mu} (\bm{X}) - \mu_S} \geq \frac{1}{24}\cdot \lprp{\frac{d}{n}}^{\frac{\alpha}{1 + \alpha}}\mid \bm{X}} \geq \frac{1}{2}.
    \end{equation*}
    In the alternate case where $\norm{\hat{\mu}(\bm{X}) - \mu_R} \geq \frac{1}{24} \cdot \lprp{\frac{d}{n}}^{\frac{\alpha}{1 + \alpha}}$ for all such $R$, the same conclusion holds true trivially. From  these two cases, we obtain:
    \begin{equation*}
        \mb{P} \lbrb{\norm{\hat{\mu} (\bm{X}) - \mu_S} \geq \frac{1}{24}\cdot \lprp{\frac{d}{n}}^{\frac{\alpha}{1 + \alpha}}\mid \bm{X}} \geq \frac{1}{2}.
    \end{equation*}
    Since such an $\bm{X}$ occurs with probability at least $1 / 2$, we arrive at our result:
    \begin{equation*}
        \mb{P} \lbrb{\norm{\hat{\mu} (\bm{X}) - \mu_S} \geq \frac{1}{24}\cdot \lprp{\frac{d}{n}}^{\frac{\alpha}{1 + \alpha}}} \geq \frac{1}{4}.
    \end{equation*}
\end{proof}

As part of our proof, we use the following one-dimensional lower bound from \cite{devroye2016sub}.

\begin{theorem}
    \label{thm:odlb}
    For any $n$, $\delta \in \lprp{2^{-\frac{n}{4}}, \frac{1}{2}}$, there exists a set of distributions $\mc{G}$ such that any $\mc{D} \in \mc{G}$ satisfies the weak-moment condition for some $\alpha > 0$ such that for any estimator $\hat{\mu}$:
    \begin{equation*}
        \mb{P}_{\mc{D} \in \mc{G}} \lbrb{\abs{\hat{\mu}(\bm{X}) - \mu(\mc{D})} \geq \lprp{\frac{\log 2 / \delta}{n}}^{\frac{\alpha}{1 + \alpha}}} \geq \delta
    \end{equation*}
    where $\bm{X}$ are drawn iid from $\mc{D}$.
\end{theorem}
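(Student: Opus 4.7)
The plan is to apply the classical two-point Le Cam reduction. The goal is to exhibit two distributions $P_0, P_1$ on $\mb{R}$, each satisfying the weak-moment condition with constant $1$ for the prescribed $\alpha$, whose means are separated by $r := 2\lprp{\log(2/\delta)/n}^{\alpha/(1+\alpha)}$, and for which the total variation distance between the $n$-fold product distributions satisfies $\mathrm{TV}(P_0^{\otimes n}, P_1^{\otimes n}) \leq 1 - 2\delta$. Once such a pair is available, the events $\{\abs{\hat\mu - \mu_0} < r/2\}$ and $\{\abs{\hat\mu - \mu_1} < r/2\}$ are disjoint for any estimator $\hat\mu$, so the standard Le Cam argument forces at least one of $\mb{P}_{P_0}[\abs{\hat\mu - \mu_0} \geq r/2]$ or $\mb{P}_{P_1}[\abs{\hat\mu - \mu_1} \geq r/2]$ to exceed $(1 - \mathrm{TV})/2 \geq \delta$, which yields the claim with $\mc{G} = \{P_0, P_1\}$.

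For the construction I would take $P_0$ to be the Dirac mass at $0$ and $P_1$ to be a spike distribution placing mass $1 - p$ at $0$ and mass $p$ at $t := c \cdot p^{-1/(1+\alpha)}$, for a parameter $p \in (0, 1)$ and a small absolute constant $c$. Then $\mu_0 = 0$, $\mu_1 = c \cdot p^{\alpha/(1+\alpha)}$, and a direct computation gives
\begin{equation*}
    \mb{E}_{P_1}\lsrs{\abs{X - \mu_1}^{1+\alpha}} = c^{1+\alpha}\lprp{(1-p)p^{\alpha} + (1-p)^{1+\alpha}} \leq 2 c^{1+\alpha},
\end{equation*}
which can be driven below $1$ by fixing $c$ to be a small absolute constant. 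Setting $p = c' \log(2/\delta)/n$ for a second absolute constant $c'$ then yields $\mu_1 \asymp \lprp{\log(2/\delta)/n}^{\alpha/(1+\alpha)}$; since $P_0^{\otimes n}$ is concentrated on $(0,\dots,0)$ while $P_1^{\otimes n}$ assigns this atom mass $(1-p)^n$, the total variation equals $1 - (1-p)^n$, which is at most $1 - 2\delta$ for $c'$ sufficiently small.

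The hypothesis $\delta \in (2^{-n/4}, 1/2)$ is exactly what makes this construction valid: the lower endpoint ensures $p = c' \log(2/\delta)/n$ stays bounded well below $1$ so that the spike distribution is well-defined, while the upper endpoint ensures $1 - \mathrm{TV} \geq 2\delta$ is a nontrivial estimate. The main subtlety I anticipate is not mathematical but bookkeeping: pinning down $c$ and $c'$ jointly so that the weak-moment bound holds with constant \emph{exactly} $1$, as normalized in (MC), and the separation is at least $r$, without losing the stated rate. This is a routine constant optimization, and the overall structure of the reduction is by now standard; the argument given here is essentially the one-dimensional spike construction of \cite{devroye2016sub}.
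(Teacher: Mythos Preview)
The paper does not give its own proof of this statement; it is quoted verbatim from \cite{devroye2016sub} and used as a black box in the proof of \cref{thm:lb}. Your Le Cam two-point argument with a Dirac mass versus a Bernoulli spike is exactly the construction carried out in that reference, and your sketch is correct. One minor caveat you already anticipated: the constants $c,c'$ cannot be chosen to deliver the lower bound with leading constant exactly $1$ as literally written in the display (the moment constraint forces $c<1$, which in turn caps the achievable separation below $2(\log(2/\delta)/n)^{\alpha/(1+\alpha)}$), but this is immaterial for the paper since \cref{thm:lb} only invokes the bound with the prefactor $1/24$.
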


Finally, we have the main theorem of the section:

\begin{theorem}
    \label{thm:lb}
    Let $n > 0$ and $\delta \in \lprp{e^{-\frac{n}{4}}, \frac{1}{4}}$. Then, there exists a set of distributions over $\mb{R}^d$, $\mc{F}$ such that each $\mc{D} \in \mc{F}$ satisfies the weak-moment condition for some $\alpha > 0$ and the following holds, for any estimator $\hat{\mu}$:
    \begin{equation*}
        \mb{P}_{\mc{D} \in \mc{F}} \lbrb{\norm{\hat{\mu} (\bm{X}) - \mu(\mc{D})} \geq \frac{1}{24}\cdot\max \lprp{\lprp{\frac{d}{n}}^{\frac{\alpha}{1 + \alpha}}, \sqrt{\frac{d}{n}}, \lprp{\frac{\log 2 / \delta}{n}}^{\frac{\alpha}{1 + \alpha}}}} \geq \delta,
    \end{equation*}
    where $\bm{X}$ are generated iid from $\mc{D}$. 
\end{theorem}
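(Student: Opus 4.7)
The plan is to combine three separate constructions, one targeting each term in the maximum, and to set $\mc{F}$ to whichever subfamily is associated with the dominant term for the given triple $(n,d,\delta)$. All three constructions will satisfy (MC) for a common fixed $\alpha \in (0, 1]$, and each will force any estimator to fail with probability at least $\delta$ at its corresponding radius times $1/24$; taking $\mc{F}$ to be the subfamily whose associated radius is largest then yields the stated maximum.

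For the $(d/n)^{\alpha/(1+\alpha)}$ term, apply \cref{lem:cplb} directly: it already outputs a family satisfying (MC) (via \cref{lem:const1pa}) on which every estimator has failure probability at least $1/4 \geq \delta$ at the stated radius. For the $(\log(2/\delta)/n)^{\alpha/(1+\alpha)}$ term, embed the one-dimensional lower bound of \cref{thm:odlb} into the first coordinate of $\mb{R}^d$: lift each $\nu$ in the 1D family $\mc{G}$ to the $d$-dimensional distribution of $(Y, 0, \dots, 0)$ with $Y \ts \nu$. The lifted mean is $(\mu(\nu), 0, \dots, 0)$, and for any unit vector $v \in \mb{R}^d$,
\begin{equation*}
    \mb{E}[\abs{\inp{v}{X - \mu}}^{1 + \alpha}] = \abs{v_1}^{1+\alpha}\cdot \mb{E}[\abs{Y - \mu(\nu)}^{1+\alpha}] \leq 1,
\end{equation*}
so (MC) is preserved. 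The first coordinate of any $d$-dimensional estimator is then a one-dimensional estimator whose absolute error lower-bounds the $\norm{\cdot}$ error in $\mb{R}^d$, so \cref{thm:odlb} produces a distribution in the lifted family on which $\hat{\mu}$ fails at the required radius with probability at least $\delta$.

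The $\sqrt{d/n}$ term is the one requiring genuinely new work, as it can dominate when $d > n$, a regime outside the reach of \cref{lem:cplb}'s construction (which requires $d \leq 8n$). Here the plan is a Gilbert--Varshamov / Fano argument on a Gaussian location family: take a packing $\mc{C} \subseteq \{\pm c/\sqrt{n}\}^d$ of size $2^{\Omega(d)}$ with pairwise distance at least $c\sqrt{d/n}$ for a small universal constant $c$, and let $\mc{F}_2 = \{N(\mu, I_d) : \mu \in \mc{C}\}$. Gaussian marginals have $(1 + \alpha)$-moment at most $1$ for any $\alpha \in (0, 1]$ (since $(\cdot)^{(1+\alpha)/2}$ is concave on $[0,\infty)$, so $\mb{E}[\abs{Z}^{1+\alpha}] = \mb{E}[(Z^2)^{(1+\alpha)/2}] \leq 1$ for $Z \ts N(0,1)$ by Jensen), verifying (MC). The Gaussian KL formula gives the pairwise KL between any two $n$-sample product measures as $n \norm{\mu_1 - \mu_2}^2 / 2 \leq c^2 d$, so for $c$ sufficiently small, Fano's inequality yields a constant failure probability (at least $1/4 \geq \delta$) at radius $(1/24) \sqrt{d/n}$.

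The main obstacle is this middle case. The first and third terms reduce cleanly to already-established lemmas, whereas for the $\sqrt{d/n}$ term one must simultaneously verify the $(1 + \alpha)$-moment bound uniformly in $\alpha$, exhibit a Gilbert--Varshamov packing of the right cardinality and minimum distance, and tune the constants so that the $1/24$ factor emerges. Once all three pieces are in place, choosing $\mc{F}$ to be whichever of the three subfamilies corresponds to the dominant term in the maximum yields the overall lower bound.
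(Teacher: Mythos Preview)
Your proposal is correct, and for two of the three terms it coincides with the paper's argument: the $(d/n)^{\alpha/(1+\alpha)}$ term is exactly \cref{lem:cplb}, and the $(\log(2/\delta)/n)^{\alpha/(1+\alpha)}$ term is exactly \cref{thm:odlb} lifted to the first coordinate, as you describe.

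The difference is in the $\sqrt{d/n}$ term. The paper does not introduce a separate Gaussian/Fano construction; it simply invokes \cref{lem:cplb} a second time, now with the parameter $\alpha$ set to $1$, so that $(d/n)^{\alpha/(1+\alpha)}$ specializes to $\sqrt{d/n}$. Since any distribution satisfying (MC) with $\alpha = 1$ automatically satisfies it for every smaller $\alpha$ (by Jensen applied to $t\mapsto t^{(1+\alpha)/2}$), that family is admissible for the target $\alpha$ as well, and no new machinery is needed. Your Gilbert--Varshamov/Fano argument is a genuinely different route: it is longer and requires tuning the packing constant against the KL budget, but it has the advantage of working for all $(d,n)$, whereas the construction underlying \cref{lem:cplb} places mass $1 - d/(8n)$ at the origin and so is only a valid probability distribution when $d \le 8n$. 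Your observation that the $\sqrt{d/n}$ term is the dominant one precisely when $d > n$ is therefore a real point: the paper's one-line reduction leaves the regime $d > 8n$ unaddressed, and your Gaussian argument fills that gap cleanly.
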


\begin{proof}[Proof of \cref{thm:lb}]
    The proof of theorem follows from Lemma~\ref{lem:cplb} by setting $\alpha$ to $1$ and $\alpha$ and from Theorem~\ref{thm:odlb}. 
\end{proof}

\section*{Acknowledgments}
We gratefully acknowledge the support of the NSF through grants IIS-1619362 and DMS-2023505 as well as support from the Mathematical Data Science program of the Office of Naval Research under grant number N00014-18-1-2764. We would like to thank Ohad Shamir for pointing out typographical errors in an earlier version of our draft.

\bibliographystyle{alpha}
\bibliography{refs}

\newcommand{\etalchar}[1]{$^{#1}$}
\begin{thebibliography}{DKK{\etalchar{+}}18}

\bibitem[AMS99]{alon1999space}
N.~Alon, Y.~Matias, and M.~Szegedy.
\newblock The space complexity of approximating the frequency moments.
\newblock {\em Journal of Computer and System Sciences}, 58(1):137--147, 1999.

\bibitem[BLM13]{boucheron2013concentration}
S.~Boucheron, G.~Lugosi, and P.~Massart.
\newblock {\em Concentration Inequalities: A Nonasymptotic Theory of
  Independence}.
\newblock Oxford University Press, 2013.

\bibitem[CDG19]{cheng2019high}
Yu~Cheng, Ilias Diakonikolas, and Rong Ge.
\newblock High-dimensional robust mean estimation in nearly-linear time.
\newblock In Timothy~M. Chan, editor, {\em Proceedings of the Thirtieth Annual
  {ACM-SIAM} Symposium on Discrete Algorithms, {SODA} 2019, San Diego,
  California, USA, January 6-9, 2019}, pages 2755--2771. {SIAM}, 2019.

\bibitem[CFB19]{cfb}
Yeshwanth Cherapanamjeri, Nicolas Flammarion, and Peter~L. Bartlett.
\newblock Fast mean estimation with sub-gaussian rates.
\newblock In Alina Beygelzimer and Daniel Hsu, editors, {\em Conference on
  Learning Theory, {COLT} 2019, 25-28 June 2019, Phoenix, AZ, {USA}}, volume~99
  of {\em Proceedings of Machine Learning Research}, pages 786--806. {PMLR},
  2019.

\bibitem[CG17]{catoni2017dimension}
O.~Catoni and I.~Giulini.
\newblock Dimension-free {PAC}-{B}ayesian bounds for matrices, vectors, and
  linear least squares regression.
\newblock {\em arXiv preprint arXiv:1712.02747}, 2017.

\bibitem[CHK{\etalchar{+}}20]{chkrt}
Yeshwanth Cherapanamjeri, Samuel~B. Hopkins, Tarun Kathuria, Prasad
  Raghavendra, and Nilesh Tripuraneni.
\newblock Algorithms for heavy-tailed statistics: regression, covariance
  estimation, and beyond.
\newblock In Konstantin Makarychev, Yury Makarychev, Madhur Tulsiani, Gautam
  Kamath, and Julia Chuzhoy, editors, {\em Proccedings of the 52nd Annual {ACM}
  {SIGACT} Symposium on Theory of Computing, {STOC} 2020, Chicago, IL, USA,
  June 22-26, 2020}, pages 601--609. {ACM}, 2020.

\bibitem[CSV17]{charikar2017learning}
Moses Charikar, Jacob Steinhardt, and Gregory Valiant.
\newblock Learning from untrusted data.
\newblock In Hamed Hatami, Pierre McKenzie, and Valerie King, editors, {\em
  Proceedings of the 49th Annual {ACM} {SIGACT} Symposium on Theory of
  Computing, {STOC} 2017, Montreal, QC, Canada, June 19-23, 2017}, pages
  47--60. {ACM}, 2017.

\bibitem[Dep20a]{depersin2020robust}
Jules Depersin.
\newblock Robust subgaussian estimation with {VC}-dimension, 2020.

\bibitem[Dep20b]{depersin2020spectral}
Jules Depersin.
\newblock A spectral algorithm for robust regression with subgaussian rates,
  2020.

\bibitem[DHL19]{DBLP:conf/nips/DongH019}
Yihe Dong, Samuel~B. Hopkins, and Jerry Li.
\newblock Quantum entropy scoring for fast robust mean estimation and improved
  outlier detection.
\newblock In Hanna~M. Wallach, Hugo Larochelle, Alina Beygelzimer, Florence
  d'Alch{\'{e}}{-}Buc, Emily~B. Fox, and Roman Garnett, editors, {\em Advances
  in Neural Information Processing Systems 32: Annual Conference on Neural
  Information Processing Systems 2019, NeurIPS 2019, 8-14 December 2019,
  Vancouver, BC, Canada}, pages 6065--6075, 2019.

\bibitem[Din16]{DBLP:conf/focs/2016}
Irit Dinur, editor.
\newblock {\em {IEEE} 57th Annual Symposium on Foundations of Computer Science,
  {FOCS} 2016, 9-11 October 2016, Hyatt Regency, New Brunswick, New Jersey,
  {USA}}. {IEEE} Computer Society, 2016.

\bibitem[DK19]{diakonikolas2019recent}
I.~Diakonikolas and D.~M Kane.
\newblock Recent advances in algorithmic high-dimensional robust statistics.
\newblock {\em arXiv preprint arXiv:1911.05911}, 2019.

\bibitem[DKH18]{DBLP:conf/stoc/2018}
Ilias Diakonikolas, David Kempe, and Monika Henzinger, editors.
\newblock {\em Proceedings of the 50th Annual {ACM} {SIGACT} Symposium on
  Theory of Computing, {STOC} 2018, Los Angeles, CA, USA, June 25-29, 2018}.
  {ACM}, 2018.

\bibitem[DKK{\etalchar{+}}16]{diakonikolas2016robust}
Ilias Diakonikolas, Gautam Kamath, Daniel~M. Kane, Jerry Li, Ankur Moitra, and
  Alistair Stewart.
\newblock Robust estimators in high dimensions without the computational
  intractability.
\newblock In Dinur \cite{DBLP:conf/focs/2016}, pages 655--664.

\bibitem[DKK{\etalchar{+}}17]{diakonikolas2017being}
Ilias Diakonikolas, Gautam Kamath, Daniel~M. Kane, Jerry Li, Ankur Moitra, and
  Alistair Stewart.
\newblock Being robust (in high dimensions) can be practical.
\newblock In Doina Precup and Yee~Whye Teh, editors, {\em Proceedings of the
  34th International Conference on Machine Learning, {ICML} 2017, Sydney, NSW,
  Australia, 6-11 August 2017}, volume~70 of {\em Proceedings of Machine
  Learning Research}, pages 999--1008. {PMLR}, 2017.

\bibitem[DKK{\etalchar{+}}18]{diakonikolas2018robustly}
Ilias Diakonikolas, Gautam Kamath, Daniel~M. Kane, Jerry Li, Ankur Moitra, and
  Alistair Stewart.
\newblock Robustly learning a gaussian: getting optimal error, efficiently.
\newblock In Artur Czumaj, editor, {\em Proceedings of the Twenty-Ninth Annual
  {ACM-SIAM} Symposium on Discrete Algorithms, {SODA} 2018, New Orleans, LA,
  USA, January 7-10, 2018}, pages 2683--2702. {SIAM}, 2018.

\bibitem[DKS18]{diakonikolas2018list}
Ilias Diakonikolas, Daniel~M. Kane, and Alistair Stewart.
\newblock List-decodable robust mean estimation and learning mixtures of
  spherical gaussians.
\newblock In Diakonikolas et~al. \cite{DBLP:conf/stoc/2018}, pages 1047--1060.

\bibitem[DL19]{deplecmean}
Jules Depersin and Guillaume Lecué.
\newblock Robust subgaussian estimation of a mean vector in nearly linear time,
  2019.

\bibitem[DLLO16]{devroye2016sub}
L.~Devroye, M.~Lerasle, G.~Lugosi, and R.~I. Oliveira.
\newblock Sub-{G}aussian mean estimators.
\newblock {\em Ann. Statist.}, 44(6):2695--2725, 2016.

\bibitem[Hop20]{hopkins2018sub}
Samuel~B. Hopkins.
\newblock Mean estimation with sub-{G}aussian rates in polynomial time.
\newblock {\em Ann. Statist.}, 48(2):1193--1213, 2020.

\bibitem[Hub64]{huber64}
Peter~J. Huber.
\newblock Robust estimation of a location parameter.
\newblock {\em Ann. Math. Statist.}, 35:73--101, 1964.

\bibitem[JVV86]{jerrum1986random}
Mark Jerrum, Leslie~G. Valiant, and Vijay~V. Vazirani.
\newblock Random generation of combinatorial structures from a uniform
  distribution.
\newblock {\em Theor. Comput. Sci.}, 43:169--188, 1986.

\bibitem[KSS18]{kothari2018robust}
Pravesh~K. Kothari, Jacob Steinhardt, and David Steurer.
\newblock Robust moment estimation and improved clustering via sum of squares.
\newblock In Diakonikolas et~al. \cite{DBLP:conf/stoc/2018}, pages 1035--1046.

\bibitem[LLVZ20]{fredhtmean}
Zhixian Lei, Kyle Luh, Prayaag Venkat, and Fred Zhang.
\newblock A fast spectral algorithm for mean estimation with sub-{G}aussian
  rates.
\newblock In Jacob~D. Abernethy and Shivani Agarwal, editors, {\em Conference
  on Learning Theory, {COLT} 2020, 9-12 July 2020, Virtual Event [Graz,
  Austria]}, volume 125 of {\em Proceedings of Machine Learning Research},
  pages 2598--2612. {PMLR}, 2020.

\bibitem[LM19a]{lugosi2017sub}
G.~Lugosi and S.~Mendelson.
\newblock Sub-{G}aussian estimators of the mean of a random vector.
\newblock {\em Ann. Statist.}, 47(2):783--794, 04 2019.

\bibitem[LM19b]{htsurvey}
G{\'{a}}bor Lugosi and Shahar Mendelson.
\newblock Mean estimation and regression under heavy-tailed distributions: {A}
  survey.
\newblock {\em Found. Comput. Math.}, 19(5):1145--1190, 2019.

\bibitem[LM19c]{trimmed}
Gabor Lugosi and Shahar Mendelson.
\newblock Robust multivariate mean estimation: the optimality of trimmed mean,
  2019.

\bibitem[LM20]{lugosi2018risk}
G\'{a}bor Lugosi and Shahar Mendelson.
\newblock Risk minimization by median-of-means tournaments.
\newblock {\em J. Eur. Math. Soc. (JEMS)}, 22(3):925--965, 2020.

\bibitem[LRV16]{lai2016agnostic}
Kevin~A. Lai, Anup~B. Rao, and Santosh~S. Vempala.
\newblock Agnostic estimation of mean and covariance.
\newblock In Dinur \cite{DBLP:conf/focs/2016}, pages 665--674.

\bibitem[LT91]{ledoux1991probability}
M.~Ledoux and M.~Talagrand.
\newblock {\em Probability in {B}anach Spaces: Isoperimetry and Processes},
  volume~23.
\newblock Springer Science \& Business Media, 1991.

\bibitem[MZ20]{mzcovariance}
Shahar Mendelson and Nikita Zhivotovskiy.
\newblock Robust covariance estimation under {$L_4-L_2$} norm equivalence.
\newblock {\em Ann. Statist.}, 48(3):1648--1664, 2020.

\bibitem[Nem00]{nemineq}
Arkadi Nemirovski.
\newblock Topics in non-parametric statistics.
\newblock In {\em Lectures on Probability Theory and Statistics
  ({S}aint-{F}lour, 1998)}, volume 1738 of {\em Lecture Notes in Math.}, pages
  85--277. Springer, Berlin, 2000.

\bibitem[Nes98]{nesterov1998semidefinite}
Y.~Nesterov.
\newblock Semidefinite relaxation and nonconvex quadratic optimization.
\newblock {\em Optimization Methods and Software}, 9(1-3):141--160, 1998.

\bibitem[NY83]{NemYud83}
A.~S. Nemirovsky and D.~B. Yudin.
\newblock {\em {Problem Complexity and Method Efficiency in Optimization}}.
\newblock Wiley-Interscience Series in Discrete Mathematics. John Wiley \&\
  Sons, 1983.

\bibitem[SCV18]{steinhardt2017resilience}
Jacob Steinhardt, Moses Charikar, and Gregory Valiant.
\newblock Resilience: {A} criterion for learning in the presence of arbitrary
  outliers.
\newblock In Anna~R. Karlin, editor, {\em 9th Innovations in Theoretical
  Computer Science Conference, {ITCS} 2018, January 11-14, 2018, Cambridge, MA,
  {USA}}, volume~94 of {\em LIPIcs}, pages 45:1--45:21. Schloss Dagstuhl -
  Leibniz-Zentrum f{\"{u}}r Informatik, 2018.

\bibitem[Tuk60]{Tuk60}
J.~W. Tukey.
\newblock A survey of sampling from contaminated distributions.
\newblock {\em Contributions to Probability and Statistics}, pages 448--485,
  1960.

\bibitem[Wai19]{wainwright}
Martin~J. Wainwright.
\newblock {\em High-Dimensional Statistics: A Non-Asymptotic Viewpoint}.
\newblock Cambridge University Press, Cambridge, 2019.

\end{thebibliography}

\appendix

\section{Auxiliary Results}
\label{sec:aux}

\begin{lemma}
    \label{lem:mtmon}
    For any $\bm{Z} \in \mb{R}^{k \times d}$ and $x \in \mb{R}^d$, the optimal value of $\mt(x,r,\bm{Z})$ is monotonically nonincreasing in $r$.
\end{lemma}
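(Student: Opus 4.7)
The plan is to show directly that the feasible region of \ref{eq:mt} shrinks (weakly) as $r$ increases, and then observe that the objective $\sum_{i=1}^k X_{1, b_i}$ does not depend on $r$. Since maximizing the same objective over a smaller set cannot yield a larger value, monotonicity follows immediately.

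More precisely, fix $x \in \mb{R}^d$, $\bm{Z} \in \mb{R}^{k \times d}$, and suppose $r_1 \leq r_2$. I would take an arbitrary $X$ feasible for $\ref{eq:mt}(x, r_2, \bm{Z})$ and show it is feasible for $\ref{eq:mt}(x, r_1, \bm{Z})$. All constraints in \ref{eq:mt} except $\inp{v_{b_i}}{Z_i - x} \geq X_{b_i, b_i} r$ are independent of $r$, so only this one needs to be checked. The key observation is that $X \succcurlyeq 0$ forces every diagonal entry to be nonnegative, in particular $X_{b_i, b_i} \geq 0$ for each $i \in [k]$. Hence
\begin{equation*}
    X_{b_i, b_i} r_1 \;\leq\; X_{b_i, b_i} r_2 \;\leq\; \inp{v_{b_i}}{Z_i - x},
\end{equation*}
so the $r_1$-constraint is satisfied by the same $X$. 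This gives feasibility of $X$ for $\ref{eq:mt}(x, r_1, \bm{Z})$.

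To conclude, I would note that the objective $\sum_{i=1}^k X_{1, b_i}$ does not depend on $r$, so the supremum over the (weakly larger) feasible set for $r_1$ is at least the supremum over the feasible set for $r_2$. Hence the optimal value is monotonically nonincreasing in $r$.

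There is no substantive obstacle here; the only minor subtlety is invoking positive semidefiniteness to ensure nonnegativity of the diagonal entries $X_{b_i, b_i}$, without which the argument comparing $X_{b_i, b_i} r_1$ and $X_{b_i, b_i} r_2$ would fail.
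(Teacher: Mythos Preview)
Your proposal is correct and is exactly the approach taken in the paper: a feasible solution for $\ref{eq:mt}(x,r,\bm{Z})$ remains feasible for any smaller $r'$, so the optimal value can only go up as $r$ decreases. The paper states this in a single sentence without elaboration; your remark about needing $X_{b_i,b_i} \geq 0$ from positive semidefiniteness just makes explicit the one detail that justifies the inclusion of feasible sets.
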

\begin{proof}
The lemma follows trivially from the fact that a feasible solution $X$ of $\mt(x,r,\bm{Z})$ is also a feasible solution for $\mt(x,r^\prime,\bm{Z})$ for $r^\prime \leq r$.
\end{proof}

\begin{lemma}
    \label{lem:gexp}
    For $X \thicksim \ns(0, 1)$, $\mb{E} [\abs{X}] = \sqrt{\frac{2}{\pi}}$.
\end{lemma}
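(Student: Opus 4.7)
The plan is a one-line direct computation. Let $\phi(x) = (2\pi)^{-1/2} e^{-x^2/2}$ denote the standard normal density. Since $\phi$ is an even function, the integrand $\abs{x}\phi(x)$ is even, so by symmetry
\begin{equation*}
    \mb{E}[\abs{X}] = 2\int_0^\infty x\,\phi(x)\,dx = \frac{2}{\sqrt{2\pi}}\int_0^\infty x\,e^{-x^2/2}\,dx.
\end{equation*}
The remaining integral admits the elementary antiderivative $-e^{-x^2/2}$, which evaluates to $1$ on $[0,\infty)$, so $\mb{E}[\abs{X}] = 2/\sqrt{2\pi} = \sqrt{2/\pi}$.

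There is no genuine obstacle here; this is the textbook mean of a half-normal distribution, and I would expect it to take at most a couple of lines in the paper. The reason it is worth stating as a standalone lemma in the auxiliary section is that it furnishes an exact numerical constant used subsequently. In particular, for any deterministic vector $X \in \mb{R}^d$ and an independent standard Gaussian $g \sim \mc{N}(0, I_d)$, the random variable $\inp{g}{X}$ is $\mc{N}(0, \norm{X}^2)$, so the lemma immediately yields $\mb{E}_g \abs{\inp{g}{X}} = \sqrt{2/\pi}\,\norm{X}$, i.e.\ $\norm{X} = \sqrt{\pi/2}\cdot \mb{E}_g \abs{\inp{g}{X}}$. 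This is precisely the comparison inequality that produces the $\pi/2$ prefactor appearing in \cref{lem:lmom}, where one rewrites $\mb{E}\norm{X}^{1+\alpha}$ in terms of Gaussian linear functionals and applies the weak-moment hypothesis coordinate-wise via Jensen's inequality.
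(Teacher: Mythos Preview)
Your proof is correct and essentially identical to the paper's: both use the even symmetry of $\abs{x}\phi(x)$ to reduce to $2\int_0^\infty x\,\phi(x)\,dx$ and then evaluate the resulting Gaussian integral (the paper via the substitution $t = x^2/2$, you via the antiderivative $-e^{-x^2/2}$). Your added remark about the role of the constant in \cref{lem:lmom} is also accurate.
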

\begin{proof}
    \begin{align*}
        \mb{E} [\abs{X}] &= \int_{-\infty}^{\infty} \abs{x} \frac{1}{\sqrt{2\pi}} \exp\lbrb{-\frac{x^2}{2}} dx = 2\int_{0}^\infty x \frac{1}{\sqrt{2\pi}} \exp\lbrb{- \frac{x^2}{2}} dx \\
        &= \sqrt{\frac{2}{\pi}} \int_0^\infty \exp\{-t\} dt = \sqrt{\frac{2}{\pi}}.
    \end{align*}
\end{proof}

\begin{proof}[Proof of \cref{lem:lmom}]   
    The argument hinges on a Gaussian projection trick which introduces $g \sim \mathcal{N}(0, I)$ to rewrite the norm. From Lemma~\ref{lem:gexp} and the convexity of the function $f(x) = \abs{x}^{1 + \alpha}$, we have:
    \begin{align*}
        \mb{E} [\norm{X}^{1 + \alpha}] &= \mb{E}_X\lsrs{\lprp{\sqrt{\frac{\pi}{2}} \mb{E}_g \abs{\inp{X}{g}}}^{1 + \alpha}} \leq \frac{\pi}{2} \mb{E}_X \mb{E}_g \lsrs{\abs{\inp{X}{g}}^{1 + \alpha}} \\
        &= \frac{\pi}{2} \mb{E}_g \norm{g}^{1 + \alpha} \mb{E}_X \lsrs{\abs*{\inp*{X}{\frac{g}{\norm{g}}}}^{1 + \alpha}} \leq \frac{\pi}{2} \mb{E}_g [\norm{g}^{1 + \alpha}] \leq \frac{\pi}{2}\cdot d^{\frac{1 + \alpha}{2}}.
    \end{align*}
\end{proof}

The following result derives an analogue of the Chebyshev inequality that applies under the weak-moment assumption. The primary technical difficulty to showing concentrations of sums of such random variables is that we cannot exploit orthogonality of independent random variables in $L_2$ by ``expanding'' out the square---since the requisite second moments do not necessarily exist. 

\begin{proof}[Proof of \cref{lem:srv}]
The case where $\alpha = 0$ is trivial. When $\alpha > 0$, we start by defining:
\begin{equation*}
    S_i = \sum_{j = 1}^i X_j, \qquad S_0 = 0, \qquad f(x) = \abs{x}^{1 + \alpha}.
\end{equation*}
Therefore, we have from an application of Lemma~\ref{lem:fgbnd}:
\begin{align*}
    \mb{E} \lsrs{f(S_n)} &= \mb{E} \lsrs{\sum_{i = 1}^n f(S_i) - f(S_{i - 1})} = \sum_{i = 1}^n \mb{E} \lsrs{f(S_i) - f(S_{i - 1})} \\
    &= \sum_{i = 1}^n \mb{E} \lsrs{\int_{S_{i - 1}}^{S_i} f^\prime (x) dx} = \sum_{i = 1}^n \mb{E} \lsrs{X_i f^\prime (S_{i - 1}) + \int_{S_{i - 1}}^{S_i} f^\prime (x) - f^\prime (S_{i - 1})dx} \\
    &= \sum_{i = 1}^n \mb{E} \lsrs{\int_{S_{i - 1}}^{S_i} f^\prime (x) - f^\prime (S_{i - 1}) dx} \leq 2^{1 - \alpha}\sum_{i = 1}^n \mb{E} \lsrs{\int_{0}^{\abs{X_i}} f^\prime \lprp{\frac{t}{2}} dt} \\
    &= 2^{1 - \alpha}\sum_{i = 1}^n \mb{E} \lsrs{\int_{0}^{\abs{X_i} / 2} 2 f^\prime \lprp{s} ds} = 2^{2 - \alpha}\sum_{i = 1}^n \mb{E} \lsrs{f\lprp{\frac{\abs{X_i}}{2}}} \leq 2n.
\end{align*}
\end{proof}

\begin{lemma}
    \label{lem:fgbnd}
    Let $g(x) = \sgn(x) \abs{x}^{\alpha}$ for some $0 < \alpha \leq 1$. Then we have for any $h \geq 0$:
    \begin{equation*}
        \max_x g(x + h) - g(x) = 2^{1 - \alpha} h^\alpha.
    \end{equation*}
\end{lemma}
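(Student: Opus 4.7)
The plan is to analyze the quantity $g(x+h) - g(x)$ by splitting $\mathbb{R}$ into three regions according to the signs of $x$ and $x+h$: (i) $x \geq 0$, (ii) $x + h \leq 0$, and (iii) $-h < x < 0$. In the first two regions both arguments have the same sign, so $g$ agrees with either $t \mapsto t^\alpha$ or $t \mapsto -|t|^\alpha$ on the relevant interval; the crossing region (iii) is where the extra factor of $2^{1-\alpha}$ materializes, and this will be the only substantive computation.

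For case (i), $g(x+h) - g(x) = (x+h)^\alpha - x^\alpha$, and since $0 < \alpha \leq 1$ the map $x \mapsto (x+h)^\alpha - x^\alpha$ is nonincreasing on $[0,\infty)$ (its derivative is $\alpha((x+h)^{\alpha-1} - x^{\alpha-1}) \leq 0$ because $t \mapsto t^{\alpha-1}$ is nonincreasing for $\alpha \leq 1$). Hence the supremum in this case is attained at $x = 0$, with value $h^\alpha$. Case (ii) reduces to case (i) via the odd symmetry $g(-t) = -g(t)$: substituting $y = -x-h \geq 0$ yields $g(x+h) - g(x) = (y+h)^\alpha - y^\alpha$, again maximized at $y = 0$ with value $h^\alpha$.

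The key case is (iii), where $x < 0 < x+h$. Here $g(x+h) - g(x) = (x+h)^\alpha + (-x)^\alpha$. Setting $u = x + h$ and $v = -x$, we have $u, v > 0$ and $u + v = h$, so the task reduces to
\begin{equation*}
    \max_{u, v \geq 0,\; u+v = h} u^\alpha + v^\alpha.
\end{equation*}
Since $t \mapsto t^\alpha$ is concave for $0 < \alpha \leq 1$, Jensen's inequality (or a direct derivative check) gives $u^\alpha + v^\alpha \leq 2 \bigl((u+v)/2\bigr)^\alpha = 2^{1-\alpha} h^\alpha$, with equality when $u = v = h/2$, i.e., at $x = -h/2$.

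Comparing the three cases, the overall maximum is $2^{1-\alpha}h^\alpha$, attained at $x = -h/2$, since $2^{1-\alpha} \geq 1$ for $\alpha \leq 1$. No step here is really an obstacle; the only thing to be careful about is not overlooking the fact that $2^{1-\alpha} \geq 1$ so that case (iii) indeed dominates cases (i) and (ii).
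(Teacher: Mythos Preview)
Your proof is correct. The approach, however, differs from the paper's. The paper treats $l(x)=g(x+h)-g(x)$ globally: it computes $l'(x)=\alpha(|x+h|^{\alpha-1}-|x|^{\alpha-1})$ wherever defined, observes that the sign of $l'$ changes exactly once (from nonnegative to nonpositive) at $x=-h/2$, and concludes by continuity that $-h/2$ is the global maximizer. Your argument instead splits into three sign regions and, in the crossing region, rewrites the problem as maximizing $u^\alpha+v^\alpha$ under $u+v=h$, which you dispatch via concavity/Jensen. Both routes land on the same maximizer $x=-h/2$; the paper's version is a bit shorter since a single derivative computation handles the whole line, while your case split makes it more explicit why the constant $2^{1-\alpha}$ appears (it is exactly the Jensen bound for a two-point average of a concave function).
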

\begin{proof}
    Consider the function $l(x) = g(x + h) - g(x)$. We see that $l$ is differentiable everywhere except at $x = 0$ and $x = -h$. As long as $x \neq 0, -h$, we have:
    \begin{equation*}
        l^\prime (x) = g^\prime(x + h) - g^\prime(x) = \alpha (\abs{x + h}^{\alpha - 1} - \abs{x}^{\alpha - 1})
    \end{equation*}
    Since, we have $\alpha \leq 1$, $x = -\frac{h}{2}$ is a local maxima for $l(x)$. Furthermore, note that $l^\prime(x) \geq 0$ for $x \in (-\infty, -\frac{h}{2}) \setminus \{-h\}$ and $l^\prime (x) \leq 0$ for $x \in (-\frac{h}{2}, \infty) \setminus \{0\}$. Therefore, we get from the continuity of $l$ that $x = -\frac{h}{2}$ is a global maxima for $l(x)$.
\end{proof}



We now provide an auxiliary result which will be useful to controlling the moments of the thresholded versions of the vectors $X_i$.
\begin{lemma}
    \label{lem:mnshft}
    Let $\nu$ be a mean-zero distribution over $\mb{R}^d$ such that $X \ts \nu$ satisfies the weak-moment condition for some $\alpha > 0$. Furthermore, let $A \subset \mb{R}^d$ be such that $\nu(A) = \delta \leq \frac{1}{2}$. Let $\nu_S()$ be the conditional distribution of $\nu$ conditioned on the event $\{X \in S\}$ for any $X \subset \mb{R}^d$. Then we have for $Y \thicksim \nu(A^c)$:
    \begin{equation*}
        \text{Claim 1: } \norm{\mu(\nu_{A^c})} \leq 2\delta^{\frac{\alpha}{1 + \alpha}}, \qquad \text{Claim 2: }\forall v \in \mc{S}^{d - 1}, \quad \mb{E} \lsrs{\abs{\inp{v}{Y - \mu(\nu_{A^c})}}^{1 + \alpha}} \leq 20.
    \end{equation*}
\end{lemma}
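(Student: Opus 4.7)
\textbf{Plan for proving \cref{lem:mnshft}.} The two claims are really a single calculation plus a triangle-inequality decoupling. I will work with the identity
\begin{equation*}
\mu(\nu_{A^c}) \;=\; \frac{\mb{E}[X \mb{1}\{X \in A^c\}]}{\nu(A^c)} \;=\; -\frac{\mb{E}[X \mb{1}\{X \in A\}]}{1-\delta},
\end{equation*}
where the second equality uses the fact that $\nu$ is mean zero, so $\mb{E}[X\mb{1}\{X\in A^c\}] = -\mb{E}[X\mb{1}\{X\in A\}]$. Writing $\nu(A)=\delta$ explicitly will allow me to get a quantitatively sharp bound.

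\textbf{Claim 1.} I will take the variational form of the norm, $\norm{\mb{E}[X \mb{1}\{X\in A\}]} = \sup_{\norm{v}=1} \mb{E}[\inp{v}{X}\mb{1}\{X\in A\}]$, and apply H\"older's inequality with conjugate exponents $1+\alpha$ and $(1+\alpha)/\alpha$:
\begin{equation*}
\mb{E}[\inp{v}{X}\mb{1}\{X\in A\}] \;\leq\; \bigl(\mb{E}\abs{\inp{v}{X}}^{1+\alpha}\bigr)^{1/(1+\alpha)} \bigl(\mb{E}\mb{1}\{X\in A\}\bigr)^{\alpha/(1+\alpha)} \;\leq\; \delta^{\alpha/(1+\alpha)},
\end{equation*}
where the first factor is at most $1$ by the weak-moment assumption. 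Dividing by $1-\delta \geq 1/2$ gives $\norm{\mu(\nu_{A^c})} \leq 2\delta^{\alpha/(1+\alpha)}$, which is Claim~1.

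\textbf{Claim 2.} Writing $m := \mu(\nu_{A^c})$, I will use the standard convexity bound $\abs{a+b}^{1+\alpha} \leq 2^\alpha (\abs{a}^{1+\alpha} + \abs{b}^{1+\alpha}) \leq 2(\abs{a}^{1+\alpha} + \abs{b}^{1+\alpha})$ to decouple the centering:
\begin{equation*}
\mb{E}\bigl[\abs{\inp{v}{Y - m}}^{1+\alpha}\bigr] \;\leq\; 2\,\mb{E}\bigl[\abs{\inp{v}{Y}}^{1+\alpha}\bigr] + 2\abs{\inp{v}{m}}^{1+\alpha}.
\end{equation*}
The first term equals $\frac{2}{1-\delta}\mb{E}[\abs{\inp{v}{X}}^{1+\alpha}\mb{1}\{X\in A^c\}] \leq \frac{2}{1-\delta} \leq 4$, using the weak-moment bound and $\delta \leq 1/2$. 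The second term is bounded by $2\norm{m}^{1+\alpha} \leq 2\cdot(2\delta^{\alpha/(1+\alpha)})^{1+\alpha} = 2^{2+\alpha}\delta^\alpha \leq 8$ by Claim~1. Summing gives the constant $12 \leq 20$ required in Claim~2.

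\textbf{Expected obstacle.} There is essentially no obstacle; the only subtlety is choosing H\"older exponents correctly in Claim~1 (using $(1+\alpha)/\alpha$ rather than $2$, which would fail without a second moment) and remembering to absorb the factor $1/(1-\delta)$ into the stated constant via the assumption $\delta\leq 1/2$. Both claims reduce to one-line applications once this is done.
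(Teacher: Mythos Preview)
Your proposal is correct and follows essentially the same approach as the paper: the variational form of the norm combined with H\"older's inequality (with exponents $1+\alpha$ and $(1+\alpha)/\alpha$) for Claim~1, and the convexity-based decoupling $|a+b|^{1+\alpha}\le 2^\alpha(|a|^{1+\alpha}+|b|^{1+\alpha})$ for Claim~2. Your write-up is in fact slightly cleaner---you use the identity $\mu(\nu_{A^c})=-\mb{E}[X\mb{1}\{X\in A\}]/(1-\delta)$ directly rather than routing through the factor $(1+p_A/p_{A^c})$, and you use the sharper constant $2^\alpha$ in the decoupling, which yields $12$ instead of the paper's looser $16$.
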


\begin{proof}
    Letting $p_A = \mb{P}\lbrb{X \in A}$, we have $\nu = p_A \nu_A + p_{A^c} \nu_{A^c}$. Then,
    \begin{equation*}
        \norm{\mu(\nu) - \mu(\nu_{A^c})} = \max_{v \in \mc{S}^{d-1}} \inp{v}{\mu(\nu) - \mu(\nu_{A^c})}.
    \end{equation*}
    So for any $v \in \mc{S}^{d - 1}$:
    \begin{align*}
        \inp{v}{\mu(\nu) - \mu(\nu_{A^c})} &= \inp{v}{p_A \mu(\nu_A) + p_{A^c} \mu(\nu_{A^c}) - \mu(\nu_{A^c})} \\
        &= \inp{v}{p_A \mu(\nu_A) - p_{A} \mu(\nu_{A^c})} = p_A \inp{v}{\mu(\nu_A) - \mu(\nu_{A^c})}. 
    \end{align*}
    Since $\mu(\nu) = 0$, we have $p_A \mu(\nu_A) = -p_{A^c} \mu(\nu_{A^c})$. We now get:
    \begin{equation*}
        p_A\inp{v}{\mu(\nu_A) - \mu(\nu_{A^c})} = p_A\inp*{v}{\mu(\nu_A) + \frac{p_{A}}{p_{A^c}} \mu(\nu_{A})} = \lprp{1 + \frac{p_A}{p_{A^c}}} \inp{v}{p_A\mu(\nu_{A})}.
    \end{equation*}
    Finally,
    \begin{align*}
        \inp{v}{p_A \mu(\nu_{A})} &= \mb{E}_{X \ts \mu} \lsrs{\bm{1} \lbrb{X \in A} \inp{X}{v}} \\
        &\leq \lprp{\mb{E} \lsrs{(\bm{1} \lbrb{X \in A})^{\frac{1 + \alpha}{\alpha}}}}^{\frac{\alpha}{1 + \alpha}} \cdot \lprp{\mb{E} \lsrs{\abs{\inp{X}{v}}^{1 + \alpha}}}^{\frac{1}{1 + \alpha}} = p_{A}^{\frac{\alpha}{1 + \alpha}}
    \end{align*}
    where the inequality follows by an application of H\"older's inequality. We get the first claim as:
    \begin{equation*}
        \max_{v \in \mc{S}^{d-1}} \inp{v}{\mu(\nu) - \mu(\nu_{A^c})} = \lprp{1 + \frac{p_A}{p_{A^c}}} \inp{v}{p_A\mu(\nu_{A^c})} \leq \lprp{1 + \frac{p_A}{p_{A^c}}} p_A^{\frac{\alpha}{1 + \alpha}} \leq 2\delta^{\frac{\alpha}{1 + \alpha}},
    \end{equation*}
    where the final inequality follows from the fact that $p_{A^c} \geq p_{A}$. 
    
    For the second claim, let $Y \ts \nu_{A^c}$ and $\mu_Y = \mb{E}[Y]$. We decompose the required term as follows:
    \begin{equation*}
        \mb{E} \lsrs{\abs*{\inp{Y - \mu_Y}{v}}^{1 + \alpha}} \leq 2^{1 + \alpha} \cdot \mb{E} \lsrs{\abs*{\inp{\mu_Y}{v}}^{1 + \alpha} + \abs*{\inp{Y}{v}}^{1 + \alpha}}.
    \end{equation*}
    For the first term, we have with $Z \ts \nu_{A}$:
    \begin{equation*}
        \mb{E} \lsrs{\abs{\inp{Y}{v}}^{1 + \alpha}} = p_{A^c}^{-1} \lprp{\mb{E} \lsrs{\abs{\inp{X}{v}}^{1 + \alpha}} - p_A \mb{E} \lsrs{\abs{\inp{Z}{v}}^{1 + \alpha}}} \leq 2.
    \end{equation*}
    Therefore, we finally have:
    \begin{equation*}
        \mb{E} \lsrs{\abs{\inp{Y - \mu_Y}{v}}^{1 + \alpha}} \leq 8 + 2^{1 + \alpha}\cdot 2^{1 + \alpha}\cdot \delta^{\alpha}  \leq 16,
    \end{equation*}
    which proves the second claim of the lemma.
\end{proof}

\begin{lemma}
    \label{lem:trsig}
    Let $X \ts \nu$ be a mean-zero random vector satisfying the weak-moment condition for some $0 \leq \alpha \leq 1$. Then, we have for any $\tau > 0$:
    \begin{equation*}
        \mb{E} \lsrs{\norm{X}^2\cdot\bm{1} \lbrb{\norm{X} \leq \tau}} \leq \frac{\pi}{2} d^{\frac{1 + \alpha}{2}} \tau^{1 - \alpha}.
    \end{equation*}
\end{lemma}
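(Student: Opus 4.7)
The plan is to bound the truncated second moment by rewriting $\norm{X}^2$ as $\norm{X}^{1+\alpha}\cdot\norm{X}^{1-\alpha}$, using the truncation to control the second factor, and then invoking \cref{lem:lmom} on the first factor.

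Concretely, since $1 - \alpha \geq 0$, on the event $\{\norm{X} \leq \tau\}$ we have the pointwise bound $\norm{X}^{1-\alpha} \leq \tau^{1-\alpha}$. Multiplying both sides by $\norm{X}^{1+\alpha}\cdot\bm{1}\lbrb{\norm{X} \leq \tau}$ (which is nonnegative) yields
\begin{equation*}
    \norm{X}^2\cdot\bm{1}\lbrb{\norm{X} \leq \tau} \leq \tau^{1-\alpha}\cdot\norm{X}^{1+\alpha}\cdot\bm{1}\lbrb{\norm{X} \leq \tau} \leq \tau^{1-\alpha}\cdot\norm{X}^{1+\alpha}.
\end{equation*}
Taking expectations and then applying \cref{lem:lmom}, which states that $\mb{E}[\norm{X}^{1+\alpha}] \leq \tfrac{\pi}{2}\,d^{(1+\alpha)/2}$ under the weak-moment condition, gives
\begin{equation*}
    \mb{E}\lsrs{\norm{X}^2\cdot\bm{1}\lbrb{\norm{X} \leq \tau}} \leq \tau^{1-\alpha}\cdot\mb{E}[\norm{X}^{1+\alpha}] \leq \frac{\pi}{2}\,d^{\frac{1+\alpha}{2}}\,\tau^{1-\alpha},
\end{equation*}
which is the desired inequality.

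There is no real obstacle here; the only subtlety is that the argument requires $\alpha \leq 1$ so that the exponent $1-\alpha$ on $\norm{X}$ is nonnegative and the pointwise truncation bound $\norm{X}^{1-\alpha} \leq \tau^{1-\alpha}$ holds. In the edge case $\alpha = 1$, the bound degenerates to $\mb{E}[\norm{X}^2\,\bm{1}\{\norm{X} \leq \tau\}] \leq \tfrac{\pi}{2}\,d$, which is consistent with the (already finite) second moment bound from \cref{lem:lmom}, so the statement is seamless across the whole range of $\alpha$.
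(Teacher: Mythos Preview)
Your proof is correct and is essentially identical to the paper's own argument: both factor $\norm{X}^2 = \norm{X}^{1+\alpha}\cdot\norm{X}^{1-\alpha}$, use the truncation to bound $\norm{X}^{1-\alpha}\leq\tau^{1-\alpha}$, drop the indicator, and then invoke \cref{lem:lmom}.
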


\begin{proof}
    The proof of the lemma proceeds as follows:
    \begin{equation*}
        \mb{E} \lsrs{\norm{X}^2 \cdot \bm{1} \lbrb{\norm{X} \leq \tau}} \leq \tau^{1 - \alpha} \mb{E} \lsrs{\norm{X}^{1 + \alpha} \bm{1} \lbrb{\norm{X} \leq \tau}} \leq \tau^{1 - \alpha} \mb{E} \lsrs{\norm{X}^{1 + \alpha}} \leq \frac{\pi}{2} d^{\frac{1 + \alpha}{2}} \tau^{1 - \alpha},
    \end{equation*}
    where the last inequality follows from Lemma~\ref{lem:lmom}.

\end{proof}
\section{Initial Estimate}
\label{sec:init}

In this subsection, we analyze the initial estimate used in the thresholding step. We will show that the estimate is within $O(\sqrt{d})$ of the true mean with high probability.

\begin{lemma}
    \label{lem:init}
    Let $X_1, \dots, X_n$ be i.i.d. random vectors with mean $\mu$, satisfying the weak-moment condition for some $\alpha > 0$. Then the mean estimate, $\hat{\mu}$, provided by Algorithm~\ref{alg:inite} satisfies:
    \begin{equation*}
        \norm{\mu - \hat{\mu}} \leq 24\sqrt{d},
    \end{equation*}
    with probability at least $1 - e^{-\frac{n}{50}}$.
\end{lemma}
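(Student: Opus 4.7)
The plan is to show that a good fraction of samples lie in a ball of radius $O(\sqrt{d})$ around $\mu$, and then exploit the combinatorial structure of the ``$0.6n$-medoid'' selection rule in \cref{alg:inite} by a pigeonhole argument to conclude that $\hat{\mu}$ itself is close to $\mu$.

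The first step is to control the tail of $\|X_i - \mu\|$ using \cref{lem:lmom}. Applied with $\alpha$ taken to be zero (or, equivalently, via Jensen's inequality from the stated $1+\alpha$ moment bound), \cref{lem:lmom} yields $\mb{E}[\|X_i - \mu\|] \leq \frac{\pi}{2}\sqrt{d}$, hence by Markov's inequality
\begin{equation*}
\mb{P}\bigl[\|X_i - \mu\| > 8\sqrt{d}\bigr] \leq \frac{\pi}{16} < \frac{1}{5}.
\end{equation*}
Call $X_i$ \emph{good} if $\|X_i - \mu\| \leq 8\sqrt{d}$, and let $G$ denote the set of good indices. By Hoeffding applied to the iid indicators $\bm{1}\{X_i \text{ bad}\}$, the event $|G| \geq 0.6n$ holds with probability at least $1 - e^{-2n(0.4 - 0.2)^2} = 1 - e^{-0.08n}$, which is more than enough to beat $1 - e^{-n/50}$. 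Condition on this event for the rest of the argument.

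Next I would extract the two consequences of the algorithm on this event. On one hand, for any fixed good index $i_0 \in G$, the triangle inequality gives $\|X_j - X_{i_0}\| \leq 16\sqrt{d}$ for every $j \in G$, so the radius needed to enclose $0.6n$ points around $X_{i_0}$ is at most $16\sqrt{d}$. By the $\argmin$ in Line~\ref{line:min_init}, the radius $r(\hat{\mu})$ achieved by the algorithm is therefore at most $16\sqrt{d}$, which means at least $0.6n$ of the $X_j$ lie within $16\sqrt{d}$ of $\hat{\mu}$. On the other hand, we already know that at least $0.6n$ of the $X_j$ are good.

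The key step is a pigeonhole: since two subsets of $[n]$ each of size at least $0.6n$ must intersect, there exists some index $j^*$ with $X_{j^*}$ simultaneously good and within $16\sqrt{d}$ of $\hat{\mu}$. A final triangle inequality yields
\begin{equation*}
\|\hat{\mu} - \mu\| \leq \|\hat{\mu} - X_{j^*}\| + \|X_{j^*} - \mu\| \leq 16\sqrt{d} + 8\sqrt{d} = 24\sqrt{d},
\end{equation*}
as claimed. The main delicate point, and really the only place care is needed, is calibrating the Markov threshold ($8\sqrt{d}$) and the quantile ($0.6n$) so that (i) the bad-point probability is strictly below $0.4$ with room to absorb the Hoeffding fluctuation into an $e^{-n/50}$ bound, and (ii) the two $0.6n$-sized sets are forced to intersect; the constant $24$ in the conclusion is exactly the sum of the triangle-inequality contributions $8 + 16$ from this calibration.
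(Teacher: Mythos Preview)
Your proposal is correct and follows essentially the same argument as the paper's proof: Markov's inequality via \cref{lem:lmom} to get at least $0.6n$ points within $8\sqrt{d}$ of $\mu$ (with a Hoeffding bound yielding the $e^{-n/50}$ failure probability), then the triangle inequality to show the optimal radius in \cref{alg:inite} is at most $16\sqrt{d}$, and finally the same pigeonhole-plus-triangle step to conclude $\|\hat{\mu} - \mu\| \leq 24\sqrt{d}$. The only cosmetic differences are that the paper first reduces to $\mu = 0$ by translation invariance and states the slightly looser Markov bound $\mb{P}\{\|X_i - \mu\| \leq 8\sqrt{d}\} \geq 3/4$; your sharper $\pi/16 < 1/5$ is fine and the rest is identical.
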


\begin{proof}
    Since our algorithm is translation invariant, we may assume without loss of generality that $\mu = \bm{0}$. Therefore, it suffices to prove that with probability at least $2^{-\Omega(n)}$:
    \begin{equation*}
        \norm{\hat{\mu}} \leq 16\sqrt{d}.
    \end{equation*}
    We have from Lemma~\ref{lem:lmom} that $\mb{E} [\norm{X}] \leq \frac{\pi}{2}\cdot \sqrt{d}$. Applying Markov's inequality:
    \begin{equation*}
        \mc{P} \lbrb{\norm{X} \leq 8\sqrt{d}}  \geq \frac{3}{4}.
    \end{equation*}
    Combining with Hoeffding's inequality we conclude that:
    \begin{equation*}
        \mc{P} \lbrb{\sum_{i = 1}^n \bm{1} \lbrb{\norm{X_i} \leq 8\sqrt{d}} \leq 0.6n} \leq \exp\lbrb{-\frac{n}{50}}.
    \end{equation*}

    Since Algorithm~\ref{alg:inite} returns as an estimate one of the data points, let $\hat{\mu} = X_i$ for some $i$ and let ${r_j = \min \{r > 0: \sum_{k = 1}^n \bm{1} \lbrb{\norm{X_j - X_k} \leq r} \geq 0.6n\}}$ for any $j \in [n]$. We now condition on the following event:
    \begin{equation*}
        \sum_{i = 1}^n \bm{1} \lbrb{\norm{X_i} \leq 8\sqrt{d}}  > 0.6n.
    \end{equation*}
    Let $\mc{S} = \lbrb{j: \norm{X_j} \leq 8\sqrt{d}}$. By the triangle inequality, for any $j \in \mc{S}$ we have:
    \begin{equation*}
        \sum_{k = 1}^n \bm{1} \lbrb{\norm{X_k - X_j} \leq 16\sqrt{d}} \geq 0.6n.
    \end{equation*}
    Therefore, by the definition of $\hat{\mu}$ we infer that $r_i \leq 16 \sqrt{d}$. Now, let $\mc{T} = \lbrb{k: \norm{X_k - X_i} \leq r_i}$. We have by the definition of $r_i$ that $\abs{\mc{T}} \geq 0.6n$. By the pigeonhole principle, we have that $\mc{T} \cap \mc{S} \neq \phi$. Let $j \in \mc{T} \cap \mc{S}$. By the triangle inequality we obtain:
    \begin{equation*}
        \norm{X_i} \leq \norm{X_i - X_j} + \norm{X_j} \leq 16\sqrt{d} + 8\sqrt{d} = 24\sqrt{d}.
    \end{equation*}
    Since the event being conditioned on occurs with probability at least $1 - e^{-\frac{n}{50}}$, this concludes the proof of the lemma.
\end{proof}
\section{Analyzing Relaxation}
\label{sec:ept}

We first show that the optimal value of the semidefinite program \ref{eq:mt} satisfies a bounded-difference condition with respect to the $Z_i$'s.

\begin{lemma}
  \label{lem:conc}
  Let $\bm{Y} = (Y_1, \dots, Y_k)$ be any set of $k$ vectors in $\mb{R}^d$. Now, let $\bm{Y}^\prime = (Y_1, \dots, Y_i^\prime, \dots, Y_k)$ be the same set of $k$ vectors with the $i^{th}$ vector replaced by $Y_i^\prime\in \mb{R}^d$. If $m$ and $m^\prime$ are the optimal values of $\mt(x,r,\bm{Y})$ and $\mt(x,r,\bm{Y}^\prime)$, we have:
  \begin{equation*}
    \abs{m - m^\prime} \leq 1.
  \end{equation*}
\end{lemma}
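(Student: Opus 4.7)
My plan is to prove the bounded-difference property by taking an optimal solution for one instance and explicitly constructing a feasible solution for the perturbed instance whose objective is close to optimal. Since the only constraint that couples with $Y_i$ is the one indexed by $i$, the natural idea is to ``turn off'' the $b_i$ coordinate of the optimal solution.

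Concretely, let $X^*$ be an optimal solution of $\mt(x,r,\bm{Y})$ with value $m$. I will construct $\tilde{X}$ by zeroing out the entire row and column of $X^*$ corresponding to the symbolic index $b_i$ (leaving all other entries, including the $1$ index and the $v_j$ indices, untouched). Equivalently, $\tilde{X} = P X^* P^\top$ where $P = I - e_{b_i} e_{b_i}^\top$. Because $P$ is symmetric and $X^* \succeq 0$, the matrix $\tilde{X}$ is also PSD. I will then verify that $\tilde{X}$ is feasible for $\mt(x,r,\bm{Y}^\prime)$: the normalization $\tilde{X}_{1,1}=1$ and $\sum_j \tilde{X}_{v_j,v_j}=1$ are preserved; the constraints $\tilde{X}_{1,b_j}=\tilde{X}_{b_j,b_j}$ and $\langle \tilde{v}_{b_j}, Y_j - x\rangle \geq \tilde{X}_{b_j,b_j} r$ hold trivially for $j \neq i$ (unchanged) and for $j=i$ they read $0=0$ and $0 \geq 0$ since the entire $b_i$ row and column is zero (in particular $\tilde{v}_{b_i}=0$ and $\tilde{X}_{b_i,b_i}=0$), so the fact that $Y_i$ was replaced by $Y_i^\prime$ is irrelevant.

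The objective of $\tilde{X}$ is $\sum_{j \neq i} X^*_{1,b_j} = m - X^*_{1,b_i}$. The remaining task is to show $X^*_{1,b_i} \leq 1$. This follows from the PSD condition applied to the principal $2\times 2$ submatrix on indices $\{1,b_i\}$, which gives $(X^*_{1,b_i})^2 \leq X^*_{1,1}\,X^*_{b_i,b_i} = X^*_{b_i,b_i}$, combined with the defining constraint $X^*_{1,b_i}=X^*_{b_i,b_i}$, yielding $X^*_{1,b_i} \in [0,1]$. Hence $m^\prime \geq m - 1$. Swapping the roles of $\bm{Y}$ and $\bm{Y}^\prime$ gives $m \geq m^\prime - 1$, so $|m-m^\prime| \leq 1$.

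I don't anticipate a serious obstacle; the only subtle point is making sure the zero-out operation preserves both PSD-ness and all of the coupling constraints, which is handled cleanly by the conjugation $P X^* P^\top$. The upper bound $X^*_{1,b_i}\leq 1$ is the content of the ``$b_i^2 = b_i$'' relaxation in the SDP and follows directly from the $2\times 2$ PSD minor argument above.
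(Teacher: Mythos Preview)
Your proposal is correct and follows essentially the same argument as the paper: zero out the $b_i$ row and column of a (near-)optimal solution to obtain a feasible solution for the perturbed instance, and bound the loss in objective by $X_{1,b_i}=X_{b_i,b_i}\le 1$ via the $2\times 2$ principal minor on $\{1,b_i\}$. Your use of the conjugation $\tilde X = P X^* P^\top$ to justify positive semidefiniteness is a slightly cleaner way of stating what the paper does implicitly.
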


\begin{proof}
  First, assume that $X$ is a feasible solution to $\mt(x,r,\bm{Y})$. Let us define $X^\prime$ as:
  \begin{equation*}
    X^\prime_{i,j} = \begin{cases}
                      X_{i,j} & \text{ if $i,j \neq b_i$} \\
                      0 &\text{otherwise}.
                     \end{cases}
  \end{equation*}
  That is, $X^\prime$ is equal to $X$ except with the row and column corresponding to $b_i$ being set to $0$. We see that $X^\prime$ forms a feasible solution to $\mt(x,r,\bm{Y}^\prime)$. Therefore, we have that:
  \begin{equation*}
    \sum_{j = 1}^k X_{b_j, b_j} = \sum_{j = 1, j\neq i}^k X^\prime_{b_j, b_j} + X_{b_i, b_i} \leq \sum_{j = 1, j\neq i}^k X^\prime_{b_j, b_j} + 1 \leq m^\prime + 1,
  \end{equation*}
where the bound $X_{b_i,b_i} \leq 1$ follows from the fact that the $2\times 2$ submatrix of $X$ formed by the rows and columns indexed by $1$ and $b_i$ is positive semidefinite and from the constraint that $X_{b_i, b_i} = X_{1, b_i}$. Since the series of equalities holds for all feasible solutions $X$ of $\mt(x,r,\bm{Y})$, we get:
  \begin{equation*}
    m \leq m^\prime + 1.
  \end{equation*}
  Through a similar argument, we also conclude that $m^\prime \leq m + 1$. Putting the two inequalities together, we obtain the required conclusion.
\end{proof}
For the next few lemmas, we are concerned with the case where $x = \mu$. Since we already know that the optimal SDP value satisfies the bounded differences condition, we need to verify that the expectation is small. As a first step towards this, we define the 2-to-1 norm of a matrix $M$.

\begin{definition}
  The 2-to-1 norm of $M \in \mb{R}^{n \times d}$ is defined as
  \begin{equation*}
    \normtto{M} = \max_{\substack{\norm{v} = 1 \\ \sigma_i \in \{\pm 1\}}} \sigma^\top M v = \max_{\norm{v} = 1} \norm{Mv}_1.
  \end{equation*}
\end{definition}
We consider the classical semidefinite programming relaxation of the 2-to-1 norm. To start with, we will define a matrix $X \in \mb{R}^{(n + d + 1) \times (n + d + 1)}$ with the rows and columns indexed by $1$ and the elements $\sigma_i$ and $v_j$. The semidefinite programming relaxation is defined as follows:
\begin{gather*}
  \max \sum_{i, j} M_{i,j} X_{\sigma_i, v_j} \\
  \text{s.t } X_{1,1} = 1,\ \sum_{j = 1}^d X_{v_j,v_j} = 1,\ X_{\sigma_i,\sigma_i} = 1,\ X \succcurlyeq 0. \tag{TOR} \label{eq:tor}
\end{gather*}
We now state a theorem of Nesterov as stated in \cite{hopkins2018sub}:
\begin{theorem}{(\cite{nesterov1998semidefinite})}
  \label{thm:contr}
  There is a constant $K_{2\rightarrow 1} = \sqrt{\pi/2} \leq 2$ such that the optimal value, $m$, of the semidefinite programming relaxation~\ref{eq:tor} satisfies:
  \begin{equation*}
    m \leq K_{2\rightarrow 1} \normtto{M}.
  \end{equation*}
\end{theorem}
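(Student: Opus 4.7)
The plan is to invoke the standard Gaussian rounding argument originating with Grothendieck and refined by Nesterov. The starting point is to take an optimal feasible solution $X$ for \ref{eq:tor} achieving value $m$ and expose its latent Hilbert-space structure: since $X \succcurlyeq 0$, there exist vectors $u_0$, $\{u_{\sigma_i}\}_{i \in [n]}$, $\{u_{v_j}\}_{j \in [d]}$ in some Hilbert space $\mathcal H$ such that every SDP entry is the corresponding inner product, and the SDP constraints translate to $\|u_0\| = 1$, $\|u_{\sigma_i}\| = 1$ for every $i$, and $\sum_{j=1}^d \|u_{v_j}\|^2 = 1$. In this notation, the SDP objective becomes $m = \sum_{i,j} M_{ij} \langle u_{\sigma_i}, u_{v_j}\rangle$.

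Next I would introduce a randomized rounding based on a standard Gaussian $g$ on $\mathcal H$ (or, equivalently, on the finite-dimensional subspace spanned by the $u$'s). Define $\hat\sigma_i = \operatorname{sgn}\langle g, u_{\sigma_i}\rangle \in \{\pm 1\}$ and $\hat v_j = \langle g, u_{v_j}\rangle \in \mathbb R$. The key computation is the rounding identity: for each fixed $i,j$, decompose $u_{v_j}$ into its component along $u_{\sigma_i}$ (which is a unit vector) and its orthogonal complement, and use that $\operatorname{sgn}\langle g, u_{\sigma_i}\rangle$ is independent of the orthogonal piece while having mean zero. Together with \cref{lem:gexp}, this yields
\begin{equation*}
\mathbb E\bigl[\hat\sigma_i\, \hat v_j\bigr] \;=\; \mathbb E\bigl[\lvert\langle g, u_{\sigma_i}\rangle\rvert\bigr]\cdot \langle u_{\sigma_i}, u_{v_j}\rangle \;=\; \sqrt{2/\pi}\;\langle u_{\sigma_i}, u_{v_j}\rangle.
\end{equation*}
Summing against $M$ gives $\mathbb E\bigl[\hat\sigma^\top M \hat v\bigr] = \sqrt{2/\pi}\, m$.

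To finish, I would bound the rounded quantity by the true $2\!\to\!1$ norm. For any realization, $\hat\sigma^\top M \hat v \leq \|\hat\sigma\|_\infty$-type constraints combined with the very definition of $\normtto{M}$ give
\begin{equation*}
\hat\sigma^\top M \hat v \;\leq\; \|\hat v\|\cdot \max_{\|w\|=1,\; \sigma \in \{\pm 1\}^n} \sigma^\top M w \;=\; \|\hat v\|\cdot\normtto{M}.
\end{equation*}
Taking expectations and applying Cauchy--Schwarz, $\mathbb E\|\hat v\| \leq \sqrt{\mathbb E\|\hat v\|^2} = \sqrt{\sum_j \|u_{v_j}\|^2} = 1$, because $\mathbb E[\langle g, u_{v_j}\rangle^2] = \|u_{v_j}\|^2$. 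Combining the lower and upper bounds yields $\sqrt{2/\pi}\, m \leq \normtto{M}$, i.e.\ $m \leq \sqrt{\pi/2}\,\normtto{M}$, which is the claim.

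The main obstacle, and really the heart of the argument, is the Gaussian rounding identity together with the verification that the rounded $(\hat\sigma,\hat v)$ lands inside the feasible set of the primal problem in expectation: $\hat\sigma$ is always $\pm 1$ valued by construction, but $\hat v$ has random norm, so control over $\mathbb E\|\hat v\|$ via Jensen is essential. Once this is handled, the remainder is bookkeeping.
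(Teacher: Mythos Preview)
The paper does not prove this theorem at all; it is simply quoted as a known result of Nesterov (as restated in \cite{hopkins2018sub}), so there is no ``paper's own proof'' to compare against. Your proposal supplies a complete and correct proof via the standard Gaussian rounding argument: the Gram decomposition, the identity $\mathbb E[\hat\sigma_i\hat v_j]=\sqrt{2/\pi}\,\langle u_{\sigma_i},u_{v_j}\rangle$ obtained by splitting $u_{v_j}$ along and orthogonal to $u_{\sigma_i}$, the pointwise bound $\hat\sigma^\top M\hat v\le \|\hat v\|\cdot\normtto{M}$, and the control $\mathbb E\|\hat v\|\le 1$ via Jensen are all valid and combine to give exactly $m\le\sqrt{\pi/2}\,\normtto{M}$.
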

In the next step, we bound the expected 2-to-1 norm of the random matrix $Z$. To do this, we begin by recalling the Ledoux-Talagrand Contraction Theorem \cite{ledoux1991probability}.
\begin{theorem}
  \label{thm:ledtal}
  Let $X_1, \dots, X_n \in \mb{R}^d$ be i.i.d.~random vectors, $\mathcal{F}$ be a class of real-valued functions on $\mb{R}^d$ and $\sigma_i, \dots, \sigma_n$ be independent Rademacher random variables. If $\phi: \mb{R} \rightarrow \mb{R}$ is an $L$-Lipschitz function with $\phi(0) = 0$, then:
  \begin{equation*}
    \mathbb{E} \sup_{f \in \mathcal{F}} \sum_{i = 1}^n \sigma_i \phi(f(X_i)) \leq L\cdot \mb{E} \sup_{f \in \mathcal{F}} \sum_{i = 1}^{n} \sigma_i f(X_i).
  \end{equation*}
\end{theorem}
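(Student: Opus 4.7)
The plan is to prove Theorem \ref{thm:ledtal} by a conditioning-plus-induction argument that is classical in the theory of Rademacher processes. First, I would condition on the realization of $X_1,\dots,X_n$ and set $T = \{(f(X_1),\dots,f(X_n)) : f \in \mathcal{F}\} \subset \mb{R}^n$. Since the Rademacher expectation depends on $\mathcal{F}$ only through the values $f(X_i)$, it suffices to establish the purely deterministic inequality
\begin{equation*}
    \mb{E}_\sigma \sup_{t \in T} \sum_{i=1}^n \sigma_i \phi(t_i) \leq L \cdot \mb{E}_\sigma \sup_{t \in T} \sum_{i=1}^n \sigma_i t_i
\end{equation*}
for every bounded $T \subset \mb{R}^n$, and then integrate over the data. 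This reformulation turns a statement about function classes into a combinatorial inequality for subsets of $\mb{R}^n$.

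Next I would proceed by induction on $n$, peeling off one coordinate at a time. Fix $\sigma_2,\dots,\sigma_n$ and set $u(t) = \sum_{i \geq 2} \sigma_i \phi(t_i)$. Taking the expectation only over $\sigma_1 \in \{\pm 1\}$ gives
\begin{equation*}
    \mb{E}_{\sigma_1} \sup_{t \in T} \left[\sigma_1 \phi(t_1) + u(t)\right] = \frac{1}{2} \sup_{t \in T} \left[\phi(t_1) + u(t)\right] + \frac{1}{2} \sup_{t \in T} \left[-\phi(t_1) + u(t)\right].
\end{equation*}
The core task is therefore the single-coordinate contraction lemma
\begin{equation*}
    \sup_{t \in T} \left[\phi(t_1) + u(t)\right] + \sup_{t \in T} \left[-\phi(t_1) + u(t)\right] \leq \sup_{t \in T} \left[L t_1 + u(t)\right] + \sup_{t \in T} \left[-L t_1 + u(t)\right].
\end{equation*}
Once this is in hand, taking expectation over $\sigma_1$ replaces $\phi(t_1)$ by $L t_1$ on the first coordinate; running the same argument on coordinates $2, \dots, n$ in turn replaces every $\phi(t_i)$ by $L t_i$, and pulling the constant $L$ out of the outer Rademacher expectation recovers the claimed bound.

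The main obstacle is the single-coordinate contraction. I would prove it by a direct case analysis on the witnesses of the two left-hand suprema. Let $t^\star, s^\star \in T$ be (near-)maximizers of the first and second left-hand terms; then the left side equals $\phi(t_1^\star) - \phi(s_1^\star) + u(t^\star) + u(s^\star)$. If $t_1^\star \geq s_1^\star$, keep $(t^\star, s^\star)$ as the witnesses for the right side: $L$-Lipschitzness of $\phi$ gives $\phi(t_1^\star) - \phi(s_1^\star) \leq L(t_1^\star - s_1^\star)$, so the left side is bounded by $L t_1^\star + u(t^\star) - L s_1^\star + u(s^\star)$, which in turn is dominated by the right side. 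If instead $t_1^\star < s_1^\star$, swap the witnesses, using $s^\star$ for the $+L t_1$ supremum and $t^\star$ for the $-L t_1$ supremum; Lipschitzness now gives $\phi(t_1^\star) - \phi(s_1^\star) \leq L(s_1^\star - t_1^\star)$ and the same bound follows. A standard approximation argument handles the case of unattained suprema, and the normalization $\phi(0) = 0$ is what allows us to apply the contraction iteratively without accumulating stray constants. The technical subtlety, as in all such contraction arguments, is to orient the pair of witnesses so that the Lipschitz inequality fires in the favorable direction; the fact that each side carries \emph{two} supremum terms, allowing us to swap witnesses between them, is precisely what produces the sharp multiplicative constant $L$ rather than the weaker $2L$ one obtains for the absolute-value variant.
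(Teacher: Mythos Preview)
The paper does not prove this theorem; it is simply recalled from \cite{ledoux1991probability} and invoked as a black box in the proof of \cref{lem:tto}. Your proposal reproduces the standard conditioning-plus-coordinatewise-contraction argument from Ledoux--Talagrand, and it is correct. One minor remark: the condition $\phi(0)=0$ is actually superfluous for the form of the inequality stated here (adding a constant to $\phi$ shifts the inner sum by $c\sum_i\sigma_i$, whose expectation vanishes before the supremum is taken since it does not depend on $f$), so your comment that it prevents ``accumulating stray constants'' is not quite the right explanation---but this does not affect the validity of your argument.
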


We are now ready to bound the expected 2-to-1 norm of the random matrix $Z$.
\begin{lemma}
  \label{lem:tto}
  Let $\bm{Y} = (Y_1, \dots, Y_n) \in \mb{R}^{n\times d}$ be a set of $n$ i.i.d.~random vectors such that $\mb{E} [Y_i] = 0$ and $\mb{E} [Y_i Y_i^\top] = \Lambda$ and assume that:
  \begin{equation*}
      \max_{v \in \mc{S}^{d - 1}} \mb{E} \lsrs{\abs*{\inp{v}{Y}}^{1 + \alpha}} \leq \beta.
  \end{equation*}
  Then we have:
  \begin{equation*}
    \mb{E} \normtto{\bm{Y}} \leq 2 \sqrt{n\Tr \Lambda} + n \beta^{\frac{1}{1 + \alpha}}.
  \end{equation*}
\end{lemma}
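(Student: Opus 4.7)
The plan is to bound $\mb{E}\normtto{\bm{Y}}$ by classical empirical-process technology: dualize the $\ell_1$ norm, split the resulting empirical process into a mean part and a centered part, and handle the centered part by symmetrization followed by Ledoux--Talagrand contraction. The weak-moment assumption enters only through a first-moment bound on linear marginals, while the Hilbert-space second moment $\Lambda$ drives the Rademacher piece.

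Concretely, I would begin from $\normtto{\bm{Y}} = \sup_{\norm{v} = 1} \sum_{i = 1}^n \abs{\inp{Y_i}{v}}$ and add and subtract $\mb{E}\abs{\inp{Y_1}{v}}$ inside the supremum. Jensen's inequality together with the weak-moment hypothesis gives $\mb{E}\abs{\inp{Y_1}{v}} \leq \lprp{\mb{E}\abs{\inp{Y_1}{v}}^{1 + \alpha}}^{1/(1 + \alpha)} \leq \beta^{1/(1 + \alpha)}$, producing an $n\beta^{1/(1 + \alpha)}$ contribution from the mean part. For the centered part, one-sided symmetrization against i.i.d.~Rademacher signs $\epsilon_1, \ldots, \epsilon_n$ introduces a factor of $2$ on the Rademacher complexity $\mb{E}\sup_v \sum_i \epsilon_i \abs{\inp{Y_i}{v}}$. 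Since $\phi(x) = \abs{x}$ is $1$-Lipschitz with $\phi(0) = 0$, the Ledoux--Talagrand contraction principle (\cref{thm:ledtal}) applied to the linear class $\{y \mapsto \inp{y}{v} : \norm{v} = 1\}$ strips the absolute value, leaving $\mb{E}\sup_v \sum_i \epsilon_i \inp{Y_i}{v} = \mb{E}\norm{\sum_i \epsilon_i Y_i}$.

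To finish, Jensen's inequality one more time together with the orthogonality $\mb{E}[\epsilon_i \epsilon_j] = \delta_{ij}$ and the identity $\mb{E}\norm{Y_i}^2 = \Tr \Lambda$ yields $\mb{E}\norm{\sum_i \epsilon_i Y_i} \leq \sqrt{\mb{E}\norm{\sum_i \epsilon_i Y_i}^2} = \sqrt{n \Tr \Lambda}$, and assembling the two pieces produces the claimed bound $2\sqrt{n \Tr \Lambda} + n \beta^{1/(1 + \alpha)}$. I do not foresee a serious obstacle here: the weak-moment hypothesis is invoked only at the mean step, the covariance $\Lambda$ exists by assumption, and the remaining steps are textbook. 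The subtlety worth flagging is that routing the Rademacher piece through Ledoux--Talagrand contraction is what preserves the sharp $\sqrt{n \Tr \Lambda}$ scaling; a more direct bound via $\normtto{\bm{Y}} \leq \sqrt{n} \norm{\bm{Y}}_{op}$ combined with $\mb{E}\norm{\bm{Y}}_{op} \leq \sqrt{n \Tr \Lambda}$ would only yield the weaker $n \sqrt{\Tr \Lambda}$, which is insufficient for the downstream estimate of $r^*$ in \cref{lem:rst_bound}.
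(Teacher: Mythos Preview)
Your proposal is correct and follows essentially the same route as the paper: add and subtract the marginal mean, bound the mean term via Jensen and the weak-moment hypothesis, symmetrize the centered term (the paper spells this out via an independent copy $Y_i'$, you invoke the standard symmetrization inequality directly), apply Ledoux--Talagrand contraction with $\phi(x)=\abs{x}$ to strip the absolute value, and finish with $\mb{E}\norm{\sum_i \epsilon_i Y_i} \leq \sqrt{n\Tr\Lambda}$ via Jensen and orthogonality.
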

\begin{proof}
  Denoting by $Y$ and $Y_i^\prime$ random vectors that are independently and identically distributed as $Y_i$ and by $\sigma_i$ independent Rademacher random variables, we have: 
  \begin{align*}
    \mb{E} [\normtto{\bm{Y}}] &= \mb{E} \lsrs{\max_{\norm{v} = 1}\sum_{i = 1}^n \abs{\inp{Y_i}{v}}} = \mb{E} \lsrs{\max_{\norm{v} = 1}\sum_{i = 1}^n \abs{\inp{Y_i}{v}} + \mb{E} \abs{\inp{v}{Y_i}} - \mb{E} \abs{\inp{v}{Y_i}}}\\
    &\leq \mb{E} \lsrs{\max_{\norm{v} = 1} \sum_{i = 1}^n \abs{\inp{Y_i}{v}} - \mb{E} \abs{\inp{Y_i^\prime}{v}}} + n \max_{\norm{v} = 1}\mb{E} [\abs{\inp{v}{Y}}] \\
    &\leq \mb{E} \lsrs{\max_{\norm{v} = 1} \sum_{i = 1}^n \sigma_i (\abs{\inp{Y_i}{v}} - \abs{\inp{Y_i^\prime}{v}})} + n \max_{\norm{v} = 1} \mb{E} \lsrs{\abs{\inp{v}{Y}}}.
  \end{align*}
  Now, we have for the second term:
  \begin{equation*}
    \max_{\norm{v} = 1}\mb{E} [\abs{\inp{v}{Y}}] \leq \max_{\norm{v} = 1} \lprp{\mb{E} \inp{v}{Y}^{1 + \alpha}}^{\frac{1}{1 + \alpha}} \leq \beta^{\frac{1}{1 + \alpha}}.
  \end{equation*}
  For the first term, we employ a standard symmetrization argument:
  \begin{align*}
    \mb{E} \lsrs{\max_{\norm{v} = 1} \sum_{i = 1}^n \sigma_i (\abs{\inp{Y_i}{v}} - \abs{\inp{Y_i^\prime}{v}})} & \leq \mb{E} \lsrs{\max_{\norm{v} = 1} \sum_{i = 1}^n \sigma_i \abs{\inp{Y_i}{v}}} + \mb{E}\lsrs{\max_{\norm{v} = 1}\sum_{i = 1}^n-\sigma_i\abs{\inp{Y_i^\prime}{v}}} \\
    &= 2 \mb{E} \lsrs{\max_{\norm{v} = 1} \sum_{i = 1}^n \sigma_i\abs{\inp{v}{Y_i}}} \leq 2 \mb{E} \lsrs{\max_{\norm{v} = 1} \sum_{i = 1}^n \sigma_i\inp{v}{Y_i}} \\
    &= 2 \mb{E} \lsrs{\norm*{\sum_{i = 1}^n \sigma_i Y_i}} \leq 2 \lprp{\mb{E} \lsrs{\norm*{\sum_{i = 1}^n \sigma_i Y_i}^2}}^{1/2} \\
    &= 2 \lprp{\mb{E} \sum_{1 \leq i,j \leq n} \sigma_i \sigma_j \inp{Y_i}{Y_j}}^{1/2} = 2\sqrt{n\Tr \Lambda},
  \end{align*}
  where the second inequality follows from the Ledoux-Talagrand Contraction Principle (Theorem~\ref{thm:ledtal}).
  By putting these two bounds together, we prove the lemma.
\end{proof}

We now bound the expected value of $\mt(\mu,r,\bm{Y})$ by relating it to $\normtto{\bm{Y}}$.
\begin{lemma}
  \label{lem:exp}
  Let $\bm{Y} = (Y_1, \dots, Y_k) \in \mb{R}^{k\times d}$ be a collection of $k$ i.i.d.~random vectors with mean $\mu$ and covariance $\Lambda$ and assume that:
  \begin{equation*}
      \max_{v \in \mc{S}^{d - 1}} \mb{E} \lsrs{\abs*{\inp{v}{Y}}^{1 + \alpha}} \leq \beta.
  \end{equation*} 
  Denoting by $\mathcal{S}$ the set of feasible solutions for $\mt(\mu,r,\bm{Y})$, we have:
  \begin{equation*}
    \mb{E} \max_{x \in \mathcal{S}} \sum_{i = 1}^k X_{1, b_i} \leq \frac{1}{2r} \lprp{5 \sqrt{k \Tr \Lambda} + 2k \beta^{\frac{1}{1 + \alpha}}}.
  \end{equation*}
\end{lemma}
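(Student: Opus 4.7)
\textbf{Proof plan for Lemma~\ref{lem:exp}.} The strategy is standard for SDP-based median-of-means analyses: convert a feasible solution of \ref{eq:mt} into a feasible solution of the 2-to-1 relaxation \ref{eq:tor} applied to the centered data matrix $M = \bm{Y} - \mu\mathbf{1}^\top$, then chain Nesterov's rounding bound (Theorem~\ref{thm:contr}) with the expected 2-to-1 norm bound of Lemma~\ref{lem:tto}.

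Fix any feasible $X$ for $\ref{eq:mt}(\mu, r, \bm{Y})$. Summing the constraint $\inp{v_{b_i}}{Y_i - \mu} \geq r X_{b_i,b_i}$ over $i$ gives
\[
    r \sum_{i=1}^k X_{b_i,b_i} \;\leq\; \sum_{i=1}^k \inp{v_{b_i}}{Y_i - \mu} \;=\; \sum_{i,j} M_{ij}\, X_{b_i, v_j}.
\]
Since $X\succeq 0$, write its Gram decomposition with vectors $\mathbf{1}, \mathbf{b}_i, \mathbf{v}_j$ so that $X_{pq} = \inp{\mathbf{p}}{\mathbf{q}}$. The constraint $X_{1,b_i}=X_{b_i,b_i}$ combined with Cauchy--Schwarz gives $X_{b_i,b_i} = \inp{\mathbf{1}}{\mathbf{b}_i} \leq \|\mathbf{b}_i\| = \sqrt{X_{b_i,b_i}}$, so $\|\mathbf{b}_i\| \leq 1$. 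Define $\boldsymbol{\sigma}_i \coloneqq \mathbf{b}_i/\|\mathbf{b}_i\|$ (an arbitrary unit vector if $\mathbf{b}_i = 0$). The key observation is that each inner sum $\sum_j M_{ij} \inp{\mathbf{b}_i}{\mathbf{v}_j} = \inp{v_{b_i}}{Y_i - \mu}$ is nonnegative by the feasibility constraint, so rescaling by the factor $1/\|\mathbf{b}_i\| \geq 1$ can only increase it:
\[
    \sum_{i,j} M_{ij}\, X_{b_i, v_j} \;=\; \sum_i \|\mathbf{b}_i\| \sum_j M_{ij}\inp{\boldsymbol{\sigma}_i}{\mathbf{v}_j} \;\leq\; \sum_{i,j} M_{ij} \inp{\boldsymbol{\sigma}_i}{\mathbf{v}_j}.
\]
The right-hand side, together with $\mathbf{1}$ and the unchanged $\mathbf{v}_j$, furnishes a feasible point for the TOR relaxation \ref{eq:tor} of $M$, since $\|\boldsymbol{\sigma}_i\|=1$ and $\sum_j \|\mathbf{v}_j\|^2 = \sum_j X_{v_j,v_j} = 1$.

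Combining the two displays, taking the maximum over feasible $X$, and applying Theorem~\ref{thm:contr} with $K_{2\to 1}\leq 2$ gives $r\max_{X\in\mathcal{S}}\sum_i X_{1,b_i} \leq 2\,\normtto{M}$. Taking expectations and invoking Lemma~\ref{lem:tto} (with $n$ replaced by $k$) on the centered data matrix $M$, whose rows are i.i.d.\ mean-zero with covariance $\Lambda$ and satisfy the same weak-moment bound $\beta$, yields
\[
    r\,\mathbb{E}\max_{X\in\mathcal{S}}\sum_i X_{1,b_i} \;\leq\; 2\bigl(2\sqrt{k\Tr\Lambda} + k\beta^{1/(1+\alpha)}\bigr),
\]
which after dividing by $r$ and collecting terms matches the stated bound (up to the exact numerical constants, which can be sharpened by using the tighter value $K_{2\to 1}=\sqrt{\pi/2}$).

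The only genuinely delicate step is the Boolean-to-sign conversion in the middle paragraph: the \ref{eq:mt} relaxation has $0/1$-type $b_i$ variables with $X_{b_i,b_i} \in [0,1]$, whereas \ref{eq:tor} requires $\pm 1$-type $\sigma_i$ with $X_{\sigma_i,\sigma_i}=1$. Normalization alone would not preserve the value in general, but here it does because the per-index contribution $\inp{v_{b_i}}{Y_i-\mu}$ is forced to be nonnegative by the SDP constraint, so rescaling by the factor $1/\|\mathbf{b}_i\|\geq 1$ only helps. Once this reduction is in place, the rest is a direct chain through Nesterov's theorem and Lemma~\ref{lem:tto}.
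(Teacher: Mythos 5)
Your proposal is correct in spirit but takes a genuinely different route to the reduction from \ref{eq:mt} to \ref{eq:tor}, and it is worth flagging both the difference and a small fixable gap.

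The paper converts the $\{0,1\}$-style Gram vectors $\mathbf{b}_i$ to unit vectors via the \emph{affine} map $\mathbf{s}_i = 2\mathbf{b}_i - \mathbf{1}$, which automatically has $\|\mathbf{s}_i\| = 1$ because the constraint $X_{b_i,b_i}=X_{1,b_i}$ gives $\|\mathbf{s}_i\|^2 = 4X_{b_i,b_i} - 4X_{1,b_i} + 1 = 1$. This produces the TOR objective $\sum_i\inp{2v_{b_i}-v}{Y_i-\mu}$, and the residual $\sum_i\inp{v}{Y_i-\mu}$ is then bounded \emph{without} Nesterov's factor, directly via $\mb{E}\norm{\sum_i(Y_i-\mu)} \leq \sqrt{k\Tr\Lambda}$. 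You instead use the \emph{multiplicative} normalization $\boldsymbol{\sigma}_i = \mathbf{b}_i/\|\mathbf{b}_i\|$, which requires the key extra observation (which you correctly supply) that each per-index contribution $\inp{v_{b_i}}{Y_i-\mu} \geq X_{b_i,b_i}r \geq 0$, so dividing by $\|\mathbf{b}_i\|\leq 1$ only increases it. Your route is shorter and avoids the $2v_{b_i}-v$ decomposition entirely, but because the full objective then goes through Nesterov, you pick up $K_{2\to 1}$ on \emph{both} terms of Lemma~\ref{lem:tto}; even with $K_{2\to1}=\sqrt{\pi/2}$ this gives roughly $\frac{1}{2r}(5\sqrt{k\Tr\Lambda} + 2.5k\beta^{1/(1+\alpha)})$, not quite matching the paper's coefficient of $2$ on the $\beta$-term. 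The paper's splitting is precisely what shaves that factor off. Given the large constants elsewhere in the paper this is cosmetic, but you should not claim the constants can be made to match exactly.

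One genuine (though minor) gap: when $\mathbf{b}_i = 0$ you say ``an arbitrary unit vector'' suffices, but then the inequality $0 = \|\mathbf{b}_i\|\sum_j M_{ij}\inp{\boldsymbol{\sigma}_i}{\mathbf{v}_j} \leq \sum_j M_{ij}\inp{\boldsymbol{\sigma}_i}{\mathbf{v}_j}$ can fail if that arbitrary choice makes the right side negative. You need to pick $\boldsymbol{\sigma}_i$ so that its contribution is nonnegative, e.g.\ choose its sign after the fact; this is trivial but should be stated.
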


\begin{proof}
  First, let $X$ be a feasible solution for $\mt(\mu, r, \bm{Y})$. We construct a new matrix $W$ which is indexed by $\sigma_i$ and $v_j$ as opposed to $b_i$ and $v_j$ for $X$:
  \begin{gather*}
    W_{\sigma_i, \sigma_j} = 4X_{b_i, b_j} - 2 X_{1, b_i} - 2 X_{1, b_j} + 1,\quad W_{v_i, v_j} = X_{v_i, v_j},\quad W_{1,1} = 1, \\
    W_{1, v_i} = X_{1, v_i}, \quad W_{1, \sigma_i} = 2X_{1, b_i} - 1, \quad W_{v_i, \sigma_j} = 2X_{v_i, b_j} - X_{1, v_i}.
  \end{gather*}
  We prove that $W$ is a feasible solution to the SDP relaxation~\ref{eq:tor} of $\bm{Y} - \mu$. We see that:
  \begin{equation*}
    W_{\sigma_i, \sigma_i} = 1 \text{ and } \sum_{i = 1}^{d} W_{v_i, v_i} = 1.
  \end{equation*}
  Thus, we simply need to verify that $W$ is positive semidefinite. Let $w \in \mb{R}^{k + d + 1}$ be indexed by $1$, $\sigma_i$ and $v_j$. We construct from $w$ a new vector $w^\prime$, indexed by $1$, $b_i$ and $v_j$ and defined as follows:
  \begin{equation*}
    w^\prime_{1} = w_{1} - \sum_{i = 1}^k w_{\sigma_i}, \quad w^\prime_{b_i} = 2 w_{\sigma_i}, \quad w^\prime_{v_j} = w_{v_j}.
  \end{equation*}
  With $w^\prime$ defined in this way, we have the following equality:
  \begin{equation*}
    w^\top W w = (w^\prime)^\top X w^\prime \geq 0.
  \end{equation*}
  Since the condition holds for all $w \in \mb{R}^{k + d + 1}$, we get that $W \succcurlyeq 0$. Therefore, we conclude that $W$ is a feasible solution to the SDP relaxation~\ref{eq:tor} of $\bm{Y} - \mu$.

  We bound the expected value of $\mt(\mu,r,\bm{Y})$ as follows, denoting by $v_{b_i}$ the vector $(X_{b_i, v_1}, \dots, X_{b_i, v_d})$ and by $v$ the vector $(X_{1, v_1}, \dots, X_{1, v_d})$:
  \begin{align*}
    \mb{E} \max_{X \in \mathcal{S}} \sum_{i = 1}^k X_{1, b_i} &= \mb{E} \max_{X \in \mathcal{S}} \sum_{i = 1}^k X_{b_i, b_i} \leq \frac{1}{r}\mb{E} \max_{X \in \mathcal{S}} \sum_{i = 1}^k \inp{v_{b_i}}{Y_i - \mu} \\
    &= \frac{1}{2r} \mb{E} \max_{X \in S}\Big[ \sum_{i = 1}^k \inp{2v_{b_i} - v}{Y_i - \mu} + \sum_{i = 1}^k \inp{v}{Y_i - \mu}\Big] \\
    &\leq \frac{1}{2r} \lprp{\mb{E} \max_{X \in S} \sum_{i = 1}^k \inp{2v_{b_i} - v}{Y_i - \mu} + \mb{E} \max_{X \in \mathcal{S}} \sum_{i = 1}^k \inp{v}{Y_i - \mu}}.
  \end{align*}
  From the fact that $X$ is positive semidefinite, and from the fact that the $2\times 2$ submatrix indexed by $v_i$ and $b_j$ is positive semidefinite, we obtain:
  \begin{equation*}
    X^2_{v_i,b_j} \leq X_{v_i, v_i} X_{b_j, b_j} \leq X_{v_i, v_i} \implies \norm{v_{b_j}}^2= \sum_{i = 1}^d X^2_{v_i, b_j}  \leq \sum_{i = 1}^d X_{v_i, v_i} = 1.
  \end{equation*}
  Therefore, we get for the second term in the above equation:
  \begin{equation*}
    \mb{E} \max_{X \in \mathcal{S}} \sum_{i = 1}^{k} \inp{v}{Y_i - \mu} \leq \mb{E} \norm*{\sum_{i = 1}^{k} Y_i - \mu} \leq \lprp{\mb{E} \norm*{\sum_{i = 1}^{k} Y_i - \mu}^2}^{1/2} = (k \Tr \Lambda)^{1/2}.
  \end{equation*}
  We bound the first term using the following series of inequalities where $W$ is constructed from $X$ as described above:
  \begin{align*}
    \mb{E} \max_{x \in \mathcal{S}} \sum_{i = 1}^k \inp{2v_{b_i} - v}{Y_i - \mu} &= \mb{E} \max_{x \in \mathcal{S}} \sum_{i = 1}^k \sum_{j = 1}^d (Y_i - \mu)_j W_{\sigma_i, v_j} = \mb{E} \max_{x \in \mathcal{S}} \sum_{i = 1}^k \sum_{j = 1}^d (\bm{Y}_{i, j} - \mu_j) W_{\sigma_i, v_j} \\
    &\leq 2 \mb{E} \normtto{\bm{Y} - \bm{1}\mu^\top} \leq 4 \sqrt{k \Tr \Lambda} + 2 k \beta^{\frac{1}{1 + \alpha}},
  \end{align*}
  where the first inequality follows from Theorem~\ref{thm:contr} and the second inequality follows from Lemma~\ref{lem:tto}. By combining these three inequalities, we get:
  \begin{equation*}
    \mb{E} \max_{x \in \mathcal{S}} \sum_{i = 1}^k X_{1, b_i} \leq \frac{1}{2r} \lprp{5 \sqrt{k \Tr \Lambda} + 2 k \beta^{\frac{1}{1 + \alpha}}}.
  \end{equation*}
\end{proof}  

Finally, we establish the main technical result of this section, \cref{lem:rst_bound}.

\begin{proof}[Proof of \cref{lem:rst_bound}]
  From \cref{lem:exp}, we see that:
  \begin{equation*}
    \mb{E} \max_{X \in \mathcal{S}} \sum_{i = 1}^k X_{b_i, b_i} \leq \frac{k}{40}.
  \end{equation*}
  Now from \cref{lem:conc} and an application of the bounded difference inequality (see, for example, Theorem~6.2 in \cite{boucheron2013concentration}), with probability at least $1 - e^{k / 800}$:
  \begin{equation*}
    \max_{X \in \mathcal{S}} \sum_{i = 1}^k X_{b_i, b_i} \leq \frac{k}{20}.
  \end{equation*}
\end{proof}

In the following lemma, we analyze the set of random vectors returned by \cref{alg:part}. It will be useful to condition on the conclusion of \cref{lem:init}.

\begin{lemma}
  \label{lem:part}
  Let $\bm{X} = \{X_i\}_{i = 1}^n \ts \nu$ be iid zero-mean random vectors, satisfying the weak-moment condition for some $\alpha > 0$. Furthermore, suppose $x^\dagger$ satisfies $\norm{x^\dagger} \leq 24\sqrt{d}$. Then, the set of vectors $\bm{Y}$ returned by \cref{alg:part} with input $\bm{X}$ and $x^\dagger$ are iid random vectors vectors with mean $\tld{\mu}$ and covariance $\tld{\Sigma}$ and satisfy:
  \begin{gather*}
    \text{Claim 1: } \mb{P} \lbrb{\abs{\bm{Y}} \geq \frac{3n}{4}} \geq 1 - e^{- \frac{n}{50}}, \qquad \qquad \text{Claim 2: } \norm{\tld{\mu}} \leq 2 \lprp{\frac{d}{n}}^{\frac{\alpha}{1 + \alpha}} \\
    \text{Claim 3: } \forall \norm{v} = 1:\mb{E} \lsrs{\abs*{\inp{Y_i - \tld{\mu}}{v}}^{1 + \alpha}} \leq 20,\qquad\text{Claim 4: } \tr \tld{\Sigma} \leq 750 \max \lprp{n^{\frac{1 - \alpha}{1 + \alpha}}d^{-\frac{\alpha}{(1 + \alpha)}}, d}.
  \end{gather*}
\end{lemma}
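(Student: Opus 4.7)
The plan is to handle the four conclusions in turn, with the pruning threshold $\tau = \max\lprp{100 n^{1/(1+\alpha)} d^{-(1-\alpha)/(2(1+\alpha))},\ 100\sqrt{d}}$ playing the central role throughout. A key preliminary observation is that $\tau \geq 100\sqrt{d}$ in both branches of the maximum, so by the triangle inequality any $X_i$ with $\norm{X_i} \leq 3\tau/4$ is retained (since then $\norm{X_i - x^\dagger} \leq \norm{X_i} + 24\sqrt{d} \leq \tau$), and conversely any kept point satisfies $\norm{X_i} \leq \tau + 24\sqrt{d} \leq 5\tau/4$. Combining \cref{lem:lmom} with Markov's inequality then shows that $p_{\mathrm{drop}} \coloneqq \mb{P}\lprp{\norm{X - x^\dagger} > \tau} \leq (\pi/2)\, d^{(1+\alpha)/2}/(3\tau/4)^{1+\alpha}$ is bounded by a small absolute constant (say $1/48$) in both branches, and is further bounded by $c \cdot d/n$ for some absolute $c < 1$ whenever the first branch of the max is active (i.e.\ when $d \leq n$).

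The first two claims then follow quickly from this observation. Since the indicators $\mathbf{1}\lbrb{\norm{X_i - x^\dagger} \leq \tau}$ are iid Bernoulli with success probability at least $47/48$, Hoeffding's inequality directly gives $\abs{\bm{Y}} \geq 3n/4$ with probability at least $1 - \exp\lprp{-2n(47/48 - 3/4)^2} \geq 1 - e^{-n/50}$. For the bound on $\norm{\tld{\mu}}$, I would apply \cref{lem:mnshft} to the set $A = \lbrb{x : \norm{x - x^\dagger} > \tau}$; its hypotheses are met since $X$ is mean-zero and satisfies the weak-moment condition, and it yields $\norm{\tld{\mu}} \leq 2 p_{\mathrm{drop}}^{\alpha/(1+\alpha)} \leq 2(d/n)^{\alpha/(1+\alpha)}$ after absorbing the constant factor (in the regime $d \geq n$, where only the second branch of $\tau$ is active, the claim holds trivially since the right-hand side is at least $2$). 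The third claim is then immediate from the second conclusion of \cref{lem:mnshft} applied to the same set $A$, giving the moment bound $16 \leq 20$.

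For the trace bound, I would start from $\tr \tld{\Sigma} \leq \mb{E}\lsrs{\norm{Y_i}^2} = \mb{E}\lsrs{\norm{X}^2 \mathbf{1}\lbrb{\norm{X - x^\dagger} \leq \tau}}/(1 - p_{\mathrm{drop}})$, and use $\norm{X} \leq 5\tau/4$ on the survival event together with $p_{\mathrm{drop}} \leq 1/2$ to bound this by $2\,\mb{E}\lsrs{\norm{X}^2 \mathbf{1}\lbrb{\norm{X} \leq 5\tau/4}}$. Invoking \cref{lem:trsig} with threshold $5\tau/4$ then yields a bound of order $d^{(1+\alpha)/2} \tau^{1-\alpha}$, and substituting the two branches of $\tau$ gives the claimed bound after simplifying the exponent of $d$ using the identity $(1+\alpha)^2 - (1-\alpha)^2 = 4\alpha$. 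The main obstacle across the proof is managing the case split between the two branches of $\tau$ cleanly while keeping the constants tight enough to match the numerical bounds; the exponent algebra for the trace bound in particular requires some care.
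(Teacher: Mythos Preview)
Your proposal is correct and follows essentially the same approach as the paper's own proof: both arguments use the inclusions $\{x:\norm{x}\leq 0.75\tau\}\subseteq A\subseteq\{x:\norm{x}\leq 1.25\tau\}$ together with Markov's inequality and \cref{lem:lmom} to control $p_{\mathrm{drop}}$, deduce Claim~1 via Hoeffding, obtain Claims~2 and~3 directly from \cref{lem:mnshft}, and establish Claim~4 via $\tr\tld{\Sigma}\leq 2\,\mb{E}[\norm{X}^2\bm{1}\{\norm{X}\leq 1.25\tau\}]$ together with \cref{lem:trsig}. The only differences are cosmetic (your explicit case split on the two branches of $\tau$, and slightly different numerical constants such as $1/48$ versus the paper's $1/25$).
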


\begin{proof}
  First, consider the set $A = \{x: \norm{x - x^\dagger} \leq \thr\}$ as defined in \cref{alg:part}. Note from the definition of the set $A$ that $\{x: \norm{x} \leq 0.75\thr\} \subseteq A$. We have from Markov's inequality and \cref{lem:lmom}:
  \begin{equation*}
    \mb{P} \lbrb{X_i \in A} \geq 1 - \min \lprp{\frac{d}{n}, \frac{1}{25}}
  \end{equation*}
  Therefore, by an application of Hoeffding's inequality, using the definition of the set of points $Y_1, \dots, Y_m$, we have that, with probability at least $1 - e^{- \frac{n}{50}}$:
  \begin{equation*}
    \abs{\bm{Y}} \geq \frac{3n}{4}
  \end{equation*}
  This proves the first claim of the lemma. For the next two claims, note that conditioned on the random variable $\hat{\mu}$, each of the $Y_i$ are independent and identically distributed according to $\nu_{A}$. Again, we get from the bound on $\mb{P} \lbrb{X_i \in A}$ by an application of \cref{lem:mnshft}, the next two claims of the lemma:
  \begin{equation*}
    \text{Claim 2: } \norm{\tld{\mu}} \leq 2 \lprp{\frac{d}{n}}^{\frac{\alpha}{1 + \alpha}}, \qquad \text{Claim 3: }\forall \norm{v} = 1:\mb{E} \lsrs{\abs*{\inp{Y_i - \tld{\mu}}{v}}^{1 + \alpha}} \leq 20.
  \end{equation*}
  For the final claim, note that as $\norm{x^\dagger} \leq 24 \sqrt{d}$, we have $A \subseteq B \coloneqq \lbrb{x: \norm{x} \leq 1.25\tau}$. Therefore, we have by the property of the mean that:
  \begin{align*}
    \tr \tld{\Sigma} = \mb{E} \lsrs{\norm{Y_i - \tld{\mu}}^2} &\leq \mb{E} \lsrs{\norm{Y_i}^2} = \frac{1}{\nu(A)} \mb{E} \lsrs{\norm{X_j}^2 \bm{1} \{X_j \in A\}} \\ 
    &\leq 2 \mb{E} \lsrs{\norm{X_j}^2 \bm{1} \{X_j \in B\}} \leq 750 \max \lprp{n^{\frac{1 - \alpha}{1 + \alpha}}d^{-\frac{\alpha}{(1 + \alpha)}}, d},
  \end{align*}
  where the final inequality follows from \cref{lem:trsig} and the definition of $\thr$.
\end{proof}

The main result of this section is the following high probability guarantee on the set of points output by \cref{alg:loce}.
\begin{lemma}
\label{lem:mnConc}
Let $\delta > e^{n / 8000}$ and $\bm{X} = \{X_i\}_{i = 1}^n$ be iid random vectors with mean $\mu$, satisfying the weak-moment condition for some known $\alpha > 0$. Furthermore, suppose that $x^\dagger$ satisfies $\norm{x^\dagger - \mu} \leq 24 \sqrt{d}$. Let $\bm{Z} = \{Z_i\}_{i = 1}^k$ denote the set of vectors output by \cref{alg:loce} run with inputs $\bm{X}$, $x^\dagger$ and $\delta$. Then, there exists a point $\tld{\mu}$ such that for all $r \geq \rad$:
\begin{equation*}
  \norm{\tld{\mu} - \mu} \leq 2 \lprp{\frac{d}{n}}^{\frac{\alpha}{1 + \alpha}}\text{ and } \max_{X \in \mathcal{S}} \sum_{i = 1}^k X_{b_i, b_i} \leq \frac{k}{20},
\end{equation*}
with probability at least $1 - \delta / 2$ where $\mathcal{S}$ denotes the set of feasible solutions of $\ref{eq:mt}(\tld{\mu}, r, \bm{Z})$.
\end{lemma}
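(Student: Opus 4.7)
The plan is to combine the guarantees on the pruning step (\cref{lem:part}) with the general SDP bound of \cref{lem:rst_bound}, applied to the bucket means. Without loss of generality translate so that $\mu = 0$, so $\norm{x^\dagger} \leq 24\sqrt{d}$ and \cref{lem:part} applies directly to $\bm{Y}$, the output of \cref{alg:part}. On an event of probability at least $1 - e^{-n/50}$ the pruned set has size $m = |\bm{Y}| \geq 3n/4$; moreover, conditional on $x^\dagger$ the $Y_i$ are iid with mean $\tld{\mu}$ satisfying $\norm{\tld{\mu}} \leq 2(d/n)^{\alpha/(1+\alpha)}$, with $\Tr \tld{\Sigma} \leq 750 \max(n^{(1-\alpha)/(1+\alpha)} d^{-\alpha/(1+\alpha)}, d)$, and the weak-moment bound $\mb{E}[|\inp{Y_i - \tld{\mu}}{v}|^{1+\alpha}] \leq 20$ for every unit $v$. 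The first conclusion on $\norm{\tld{\mu} - \mu}$ is thus immediate; the remaining work is to upper bound $r^*$ by $\rad$.

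Next I would translate these moment bounds into moment bounds for a single bucket mean $Z_j = \frac{k}{m}\sum_{i \in \mc{B}_j} Y_i$. Since $Z_j$ is an average of $m/k$ iid copies of $Y$, its covariance $\Lambda$ satisfies $\Tr \Lambda = (k/m)\Tr \tld{\Sigma}$, so $\Tr \Lambda / k = \Tr \tld{\Sigma}/m$. For the weak-moment bound, renormalize $\inp{Y_i - \tld{\mu}}{v}/20^{1/(1+\alpha)}$ and apply \cref{lem:srv}:
\begin{equation*}
    \mb{E}\bigl[|\inp{Z_j - \tld{\mu}}{v}|^{1+\alpha}\bigr] = (k/m)^{1+\alpha}\,\mb{E}\Bigl[\Bigl|\sum_{i\in\mc{B}_j}\inp{Y_i - \tld{\mu}}{v}\Bigr|^{1+\alpha}\Bigr] \leq 40\,(m/k)^{-\alpha} =: \beta.
\end{equation*}

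Now invoke \cref{lem:rst_bound} on $\bm{Z}$: with probability at least $1 - e^{-k/800}$,
\begin{equation*}
    r^* \leq 1000\Bigl(\sqrt{\Tr \tld{\Sigma}/m} + 40^{1/(1+\alpha)}(m/k)^{-\alpha/(1+\alpha)}\Bigr).
\end{equation*}
Substituting $m \geq 3n/4$ and $k = 4000\log(1/\delta)$, the second term is $O\bigl((\log(1/\delta)/n)^{\alpha/(1+\alpha)}\bigr)$. For the first term, use $\sqrt{\max(a,b)} \leq \sqrt{a}+\sqrt{b}$ to split into the two cases of the $\max$: the term $\sqrt{d/m}$ contributes $O(\sqrt{d/n})$, while $\sqrt{n^{(1-\alpha)/(1+\alpha)} d^{-\alpha/(1+\alpha)}/m}$ simplifies to $O\bigl(n^{-\alpha/(1+\alpha)} d^{-\alpha/(2(1+\alpha))}\bigr)$, which is bounded by $(d/n)^{\alpha/(1+\alpha)}$ because $d \geq 1$. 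Together this yields $r^* \leq \rad$, and monotonicity of $\ref{eq:mt}(\tld{\mu}, r, \bm{Z})$ in $r$ (\cref{lem:mtmon}) extends the bound $\max_{X\in\mc{S}}\sum_i X_{b_i,b_i}\leq k/20$ to every $r \geq \rad$.

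Finally, a union bound over the two failure events gives total probability at most $e^{-n/50} + e^{-k/800}$; since $\delta \geq e^{-n/8000}$ and $k = 4000\log(1/\delta)$, both terms are bounded by $\delta/4$ (the first because $n/50 \gg n/8000$, the second because it equals $\delta^{5}$), giving the required $\delta/2$. The only delicate step is the exponent comparison verifying that $\sqrt{\Tr\tld{\Sigma}/m}$ fits inside $\rad$; the rest is bookkeeping of constants from the earlier lemmas.
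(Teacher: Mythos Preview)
Your proposal is correct and follows essentially the same route as the paper: reduce to $\mu=0$, invoke \cref{lem:part} for the moments of the pruned vectors, pass to bucket means via \cref{lem:srv}, and then apply \cref{lem:rst_bound}, finishing with a union bound. Your write-up is in fact more explicit than the paper's (which skips the verification that $\sqrt{\Tr\tld{\Sigma}/m}$ fits inside $\rad$ and the monotonicity step via \cref{lem:mtmon}), but the argument is the same.
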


\begin{proof}
  Note that it is sufficient to prove the lemma in the specific case where $\mu = \bm{0}$. We may now assume each of the $Y_i$ are iid random variables satisfying the conclusions of \cref{lem:part}. Therefore, $Z_i$ are iid random vectors with mean $\tld{\mu}$ and  covariance $\tld{\Sigma}$ satisfying:
  \begin{equation*}
    \norm{\tld{\mu}} \leq 2 \lprp{\frac{d}{n}}^{\frac{\alpha}{1 + \alpha}}\qquad \tr \tld{\Sigma} \leq \frac{750 k\max \lprp{n^{\frac{1 - \alpha}{1 + \alpha}}d^{-\frac{\alpha}{(1 + \alpha)}}, d}}{n}.
  \end{equation*}
  Furthermore, we have by an application of \cref{lem:srv} that:
  \begin{equation*}
    \forall \norm{v} = 1: \mb{E} \lsrs{\abs*{\inp{v}{Z_i - \tld{\mu}}}^{1 + \alpha}} \leq 80 \lprp{\frac{k}{n}}^{\alpha}.
  \end{equation*}
  Finally, the conclusion of the lemma follows by an application of \cref{lem:rst_bound} and the bound on the probabilities follows from the bounds in \cref{lem:rst_bound,lem:part}.
\end{proof}
\section{Gradient Descent Step}
\label{sec:grad_desc}

In this section, we analyze the gradient descent step in \cref{alg:meste}. This part of our proof is essentially identical to prior work for the finite covariance setting and we repeat it here for the sake of completeness \cite{cfb}. Throughout the section, we will analyze the convergence of gradient descent to an \emph{arbitrary} point $\smean$. However, in the final application, we will pick $\smean$ to be close to $\mu$. As discussed previously, the recovery guarantees of the gradient descent procedure are determined by the parameter $r^*$ defined below:

\begin{definition}
    \label{def:relax}
    For the bucket means, $\bm{Z} = (Z_1, \dots, Z_k)$, and point $\smean$, let $r^*$ be defined as follows:
    \begin{equation*}
        r^* \coloneqq \min \lbrb{r > 0: \ref{eq:mt} (\smean, r, \bm{Z}) \leq \frac{k}{20}}.
    \end{equation*}
\end{definition}
We also make use of the following remark implied by \cref{def:relax} (the implication follows by picking integral solutions for $X_{b_i, b_i}$ and setting the submatrix of $X$ corresponding to $v$ to be rank one in the semidefinite program \ref{eq:mt}:
\begin{remark}
    \label{rem:a21}
    For the bucket means, $\bm{Z} = (Z_1, \dots, Z_k)$, we have:
    \begin{equation*}
        \forall v \in \mb{R}^{d}, \norm{v} = 1\;\Rightarrow \abs*{\{i: \inp{Z_i - \smean}{v} \geq \radst\}} \leq 0.05k
    \end{equation*}
\end{remark}

\subsection{Distance Estimation Step}
\label{sec:disest}
In this subsection, we analyze the distance estimation step from \cref{alg:deste}. We  show that an accurate estimate of the distance of the current point from $\smean$ can be found. We begin by stating a lemma that shows that a feasible solution for $\ref{eq:mt}(x,r,\bm{Z})$ can be converted to a feasible solution for $\ref{eq:mt}(\smean,\radst,\bm{Z})$ with a reduction in optimal value. 
\begin{lemma}
  \label{lem:conv}
    Let $X \in \mb{R}^{(k + d + 1) \times (k + d + 1)}$ be a positive semidefinite matrix, symbolically indexed by $1$ and the variables $b_i$ and $v_j$. Moreover, suppose that $X$ satisfies:
  \begin{equation*}
    X_{1,1} = 1, \quad X_{b_i, b_i} = X_{1, b_i}, \quad \sum_{j = 1}^d X_{v_j, v_j} = 1, \quad \sum_{i = 1}^k X_{b_i, b_i} \geq 0.9k.
  \end{equation*}
  Then, there is a set of at least $0.85k$ indices $\mathcal{T}$ such that for all $i \in \mathcal{T}$:
  \begin{equation*}
    \inp{Z_i - \smean}{v_{b_i}} < X_{b_i, b_i} \radst,
  \end{equation*}
  and a set of at least $k / 3$ indices $\mathcal{R}$ such that for all $j \in \mathcal{R}$, we have $X_{b_j, b_j} \geq 0.85$.
\end{lemma}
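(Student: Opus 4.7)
The plan is to prove the two claims separately: claim 2 by a simple averaging bound, and claim 1 by building a feasible SDP solution whose objective value is controlled by the definition of $\radst$.

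I first record the basic PSD facts. The principal $2 \times 2$ minor of $X$ on rows/columns indexed by $1$ and $b_i$ has determinant $X_{b_i, b_i} - X_{b_i, b_i}^2 \geq 0$, using $X_{1,1} = 1$ and $X_{b_i, b_i} = X_{1, b_i}$, so $0 \leq X_{b_i, b_i} \leq 1$ for every $i$. Claim 2 is then immediate: letting $\mathcal{R} = \{j : X_{b_j, b_j} \geq 0.85\}$, the hypothesis $\sum_i X_{b_i, b_i} \geq 0.9k$ combined with $X_{b_i, b_i} \leq 1$ gives $0.9k \leq |\mathcal{R}| + 0.85(k - |\mathcal{R}|) = 0.85k + 0.15 |\mathcal{R}|$, hence $|\mathcal{R}| \geq k/3$.

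For claim 1, define $S = \{i : \inp{Z_i - \smean}{v_{b_i}} \geq X_{b_i, b_i} \radst\}$; the goal is to show $|S^c| \geq 0.85k$. The central step is to construct a new matrix $X^{(S)}$ by zeroing out every row and column of $X$ indexed by $b_i$ for $i \notin S$, leaving all other entries unchanged. Since this amounts to taking a principal submatrix padded with zero rows and columns, $X^{(S)} \succcurlyeq 0$ automatically; the equalities $X^{(S)}_{1,1}=1$, $X^{(S)}_{b_i, b_i} = X^{(S)}_{1, b_i}$, and $\sum_j X^{(S)}_{v_j, v_j} = 1$ also transfer unchanged. The only nontrivial feasibility check is the constraint $\inp{Z_i - \smean}{v^{(S)}_{b_i}} \geq X^{(S)}_{b_i, b_i} \radst$: for $i \notin S$ both sides vanish, and for $i \in S$ it is precisely the defining membership inequality for $S$. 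Hence $X^{(S)}$ is feasible for $\ref{eq:mt}(\smean, \radst, \bm{Z})$, and by \cref{def:relax},
\[
 \sum_{i \in S} X_{b_i, b_i} \;=\; \sum_{i = 1}^k X^{(S)}_{b_i, b_i} \;\leq\; \frac{k}{20}.
\]
Combining with $\sum_i X_{b_i, b_i} \geq 0.9k$ and $X_{b_i, b_i} \leq 1$ gives $|S^c| \geq \sum_{i \notin S} X_{b_i, b_i} \geq 17k/20 = 0.85k$, and taking $\mathcal{T} = S^c$ finishes the argument.

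The main obstacle is pinning down the right reduction in claim 1: one must pass from the arbitrary PSD matrix $X$ (which need not be feasible for $\ref{eq:mt}(\smean, \cdot, \bm{Z})$ at all) to an actual feasible solution at radius $\radst$, and the trick is the observation that zeroing out a carefully chosen subset of the $b_i$ rows and columns simultaneously preserves positive semidefiniteness and makes the SDP inequality constraint hold trivially on the dropped indices. Once that reduction is in place, everything else is elementary bookkeeping using only $0 \leq X_{b_i, b_i} \leq 1$ and the hypothesis $\sum_i X_{b_i, b_i} \geq 0.9k$; no probabilistic or concentration input is required.
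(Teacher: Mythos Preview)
Your proof is correct and essentially identical to the paper's: both argue claim~2 by the same averaging inequality, and for claim~1 both zero out the $b_i$ rows/columns on the complement of the ``good'' index set to produce a feasible solution for $\ref{eq:mt}(\smean,\radst,\bm{Z})$, then invoke \cref{def:relax} and the bound $X_{b_i,b_i}\le 1$. The only difference is cosmetic---you name the feasible index set $S$ and take $\mathcal{T}=S^c$, whereas the paper names $\mathcal{T}$ directly---but the logic matches step for step.
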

\begin{proof}
  We prove the lemma by contradiction. Firstly, note that $X$ is infeasible for $\ref{eq:mt}(\smean, \radst, \bm{Z})$ as the optimal value for $\ref{eq:mt}(\smean,\radst,\bm{Z})$ is less than $k / 20$ (\cref{def:relax}). Note that the only constraints of $\ref{eq:mt}(\smean,\radst,\bm{Z})$ that are violated by $X$ are constraints of the form:
  \begin{equation*}
    \inp{Z_i - \smean}{v_{b_i}} < X_{b_i, b_i} \radst.
  \end{equation*}
Now, let $\mathcal{T}$ denote the set of indices for which the above inequality is violated. We can convert $X$ to a feasible solution for $\ref{eq:mt}(\smean, \radst, \bm{Z})$ by setting to zero the rows and columns corresponding to the indices in $\mathcal{T}$. Let $X^\prime$ be the matrix obtained by the above operation. We have from \cref{def:relax}:
  \begin{equation*}
    0.05k \geq \sum_{i = 1}^k X^\prime_{b_i, b_i} = \sum_{i = 1}^k X_{b_i, b_i} - \sum_{i \in \mathcal{T}} X_{b_i, b_i} \geq 0.9k - \abs{\mathcal{T}},
  \end{equation*}
where the last inequality follows from the fact that $X_{b_i, b_i} \leq 1$. By rearranging the above inequality, we get the first claim of the lemma.

For the second claim, let $\mathcal{R}$ denote the set of indices $j$ satisfying $X_{b_j, b_j} \geq 0.85$. We have:
\begin{equation*}
    0.9k \leq \sum_{j = 1}^k X_{b_j, b_j} = \sum_{j \in \mathcal{R}} X_{b_j, b_j} + \sum_{j \notin \mathcal{R}} X_{b_j, b_j} \leq \abs{\mathcal{R}} + 0.85k - 0.85\abs{\mathcal{R}} \implies \frac{k}{3} \leq \abs{\mathcal{R}}.
  \end{equation*}
  This establishes the second claim of the lemma.
\end{proof}
The following  lemma shows that if the distance between $\smean$ and a point $x$ is small then the estimate returned by \cref{alg:deste} is also small.
\begin{lemma}
\label{lem:destg}
  Suppose a point $x\in\mb{R}^d$ satisfies $\norm{x - \smean} \leq \radAlg$. Then, \cref{alg:deste} returns a value $d^\prime$ satisfying
  \begin{equation*}
    d^\prime \leq \radAlgG.
  \end{equation*}
\end{lemma}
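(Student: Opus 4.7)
The plan is to argue by contradiction, leveraging the monotonicity guarantee of Lemma~\ref{lem:mtmon}, the structural decomposition given by Lemma~\ref{lem:conv}, and the PSD constraints on the matrix $X$ arising from \ref{eq:mt}. Assume that $d^\prime > \radAlgG$. By Lemma~\ref{lem:mtmon}, the optimal value of $\ref{eq:mt}(x, r, \bm{Z})$ is nonincreasing in $r$, so the set $\{r > 0 : \ref{eq:mt}(x, r, \bm{Z}) \geq 0.9k\}$ is an interval, and for any $r$ in it a feasible solution $X$ with $\sum_{i=1}^k X_{b_i, b_i} \geq 0.9k$ exists. In particular, picking $r$ with $\radAlgG < r \leq d^\prime$ we have $r > \radst$, so $X$ is infeasible for $\ref{eq:mt}(\smean, \radst, \bm{Z})$ by \cref{def:relax}.

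Next, I would apply Lemma~\ref{lem:conv} to this $X$ to extract two index sets: a set $\mathcal{T}$ of size at least $0.85k$ on which the $\smean$-constraint is violated, i.e. $\inp{Z_j - \smean}{v_{b_j}} < X_{b_j, b_j}\radst$, and a set $\mathcal{R}$ of size at least $k/3$ on which $X_{b_j, b_j} \geq 0.85$. Since $0.85k + k/3 > k$, the intersection $\mathcal{T} \cap \mathcal{R}$ is nonempty. Pick any $j$ in it. Because $X$ is feasible for $\ref{eq:mt}(x, r, \bm{Z})$, the corresponding constraint gives $\inp{Z_j - x}{v_{b_j}} \geq X_{b_j, b_j} r$. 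Subtracting the two inequalities yields
\begin{equation*}
    \inp{\smean - x}{v_{b_j}} \;\geq\; X_{b_j, b_j}(r - \radst).
\end{equation*}

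To convert this into a lower bound on $\norm{\smean - x}$, I would use the PSD constraints on $X$. From the $2\times 2$ submatrices indexed by $(b_j, v_i)$ we get $X_{b_j, v_i}^2 \leq X_{b_j, b_j} X_{v_i, v_i}$; summing over $i$ and using $\sum_i X_{v_i,v_i} = 1$ gives $\norm{v_{b_j}}^2 \leq X_{b_j, b_j}$. Cauchy--Schwarz then yields
\begin{equation*}
    \norm{\smean - x} \;\geq\; \frac{X_{b_j, b_j}(r - \radst)}{\norm{v_{b_j}}} \;\geq\; \sqrt{X_{b_j, b_j}}\,(r - \radst) \;\geq\; \sqrt{0.85}\,(r - \radst).
\end{equation*}
Letting $r \uparrow d^\prime > 25\radst$ gives $\norm{\smean - x} \geq \sqrt{0.85}\cdot 24\,\radst > 20\,\radst = \radAlg$, contradicting the hypothesis $\norm{x - \smean} \leq \radAlg$.

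The main (minor) obstacle is being careful about whether the supremum defining $d^\prime$ is attained, which is handled by the monotonicity from Lemma~\ref{lem:mtmon}: it suffices to run the above argument for every $r$ strictly less than $d^\prime$ that exceeds $\radAlgG$, and to check that the constant slack $\sqrt{0.85}\cdot 24 > 20$ leaves enough room to close the contradiction. Everything else is a direct application of Lemma~\ref{lem:conv} combined with the PSD bound on the $v_{b_j}$ blocks.
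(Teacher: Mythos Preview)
Your proposal is correct and follows essentially the same contradiction argument as the paper: assume the SDP value at $r' = \radAlgG$ is at least $0.9k$, invoke \cref{lem:conv} to find an index $j \in \mathcal{T}\cap\mathcal{R}$, and combine the feasibility constraint at $x$ with the violated constraint at $\smean$ via Cauchy--Schwarz to force $\norm{x-\smean} > \radAlg$. The only cosmetic difference is that you use the sharper bound $\norm{v_{b_j}}^2 \leq X_{b_j,b_j}$ where the paper uses $\norm{v_{b_j}} \leq 1$ and $X_{b_j,b_j}\leq 1$, but both yield the needed numerical contradiction.
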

\begin{proof}
Let $r^\prime = \radAlgG$. Suppose that the optimal value of $\ref{eq:mt}(x,r^\prime,\bm{Z})$ is greater than $0.9k$ and let its optimal solution be $X$. Let $\mathcal{R}$ and $\mathcal{T}$ denote the two sets whose existence is guaranteed by \cref{lem:conv}. From, the cardinalities of $\mathcal{R}$ and $\mathcal{T}$, we see that their intersection is not empty. For $j \in \mathcal{R} \cap \mathcal{T}$, we have:
  \begin{equation*}
    0.85r^\prime \leq \inp{Z_j - x}{v_{b_j}} = \inp{Z_j - \smean}{v_{b_j}} + \inp{\smean - x}{v_{b_j}} < \radst + \norm{x - \smean},
  \end{equation*}
where the first inequality follows from the fact that $j \in \mathcal{R}$ and the fact that $X$ is feasible for $\ref{eq:mt}(x, r^\prime, \bm{Z})$ and the last inequality follows from the inclusion of $j$ in $\mathcal{T}$ and Cauchy-Schwarz.

By plugging in the bounds on $r^\prime$ and $r$, we get:
  \begin{equation*}
    \norm{x - \smean} > 20.25\radst.
  \end{equation*}
  This contradicts the assumption on $\norm{x - \smean}$ and concludes the proof of the lemma.
\end{proof}
The next  lemma shows that the distance between $\smean$ and a point $x$ can be accurately estimated as long as $x$ is sufficiently far from $\smean$.
\begin{lemma}
\label{lem:dest}
  Suppose a point $x$ satisfies $\tilde{d} = \norm{x - \smean} \geq \radAlg$. Then, \cref{alg:deste} returns a value $d^\prime$ satisfying:
  \begin{equation*}
    0.95\tilde{d} \leq d^\prime \leq 1.25\tilde{d}.
  \end{equation*}
\end{lemma}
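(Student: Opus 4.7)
The plan is to prove the two inequalities $0.95\tilde{d} \leq d^\prime$ and $d^\prime \leq 1.25\tilde{d}$ separately, combining the relaxation machinery of \cref{lem:conv} and \cref{rem:a21} with the hypothesis $\tilde{d} \geq 20\radst$. Both bounds are deterministic consequences of the defining property of $r^*$; no new probabilistic work is needed.

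For the upper bound, I would argue by contradiction along the lines of \cref{lem:destg}. Assume $d^\prime > 1.25\tilde{d}$; then by monotonicity (\cref{lem:mtmon}) and the defining property of $d^\prime$, $\ref{eq:mt}(x, 1.25\tilde{d}, \bm{Z}) \geq 0.9k$. Let $X$ be an optimal solution at this value of $r$. Applying \cref{lem:conv} produces a set $\mathcal{T}$ of size at least $0.85k$ on which $\inp{Z_i - \smean}{v_{b_i}} < X_{b_i,b_i}\radst$ and a set $\mathcal{R}$ of size at least $k/3$ on which $X_{b_i,b_i} \geq 0.85$, so $\mathcal{T}\cap\mathcal{R}$ is nonempty by pigeonhole. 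For any $j$ in the intersection, combining feasibility $\inp{v_{b_j}}{Z_j - x} \geq X_{b_j,b_j}\cdot 1.25\tilde{d}$ with the decomposition $Z_j - x = (Z_j - \smean) + (\smean - x)$ and the bound $\norm{v_{b_j}} \leq 1$ (as derived in the proof of \cref{lem:exp}) yields $0.85\cdot 1.25\,\tilde{d} \leq \radst + \tilde{d}$, i.e.\ $0.0625\,\tilde{d} \leq \radst$, which contradicts $\tilde{d} \geq 20\radst$.

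For the lower bound, I would exhibit an explicit rank-one feasible solution for $\ref{eq:mt}(x, 0.95\tilde{d}, \bm{Z})$ achieving objective at least $0.9k$. Choose the unit direction $v = (\smean - x)/\tilde{d}$. Applying \cref{rem:a21} to $-v$, at most $0.05k$ indices satisfy $\inp{Z_i - \smean}{-v} \geq \radst$, so at least $0.95k$ indices satisfy $\inp{Z_i - x}{v} = \inp{Z_i - \smean}{v} + \tilde{d} > \tilde{d} - \radst \geq 0.95\tilde{d}$, again using $\tilde{d} \geq 20\radst$. Set $b_i = 1$ for exactly these indices and $b_i = 0$ otherwise, and lift to the rank-one matrix $X = uu^\top$ where $u$ stacks the value $1$, the indicator values $b_i$, and the coordinates of $v$. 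A direct check (using $b_i^2 = b_i$ for $b_i \in \{0,1\}$ and $\norm{v} = 1$) verifies every constraint of \ref{eq:mt}, and the objective $\sum_i X_{1,b_i}$ equals the count of selected indices, which is at least $0.95k \geq 0.9k$. Hence $d^\prime \geq 0.95\tilde{d}$.

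The main obstacle is not the proof mechanics but interlocking the constants $0.85$, $0.9$, $0.95$, $1.25$, and $20$ so that both directions close: any weakening of the separation hypothesis $\tilde{d} \geq 20\radst$ or of the constants in \cref{lem:conv} would collapse the contradiction in the upper bound step. Since the lemma is invoked downstream only for test points either this well-separated from $\smean$ or within $\radAlg$ of it (handled by \cref{lem:destg}), the chosen constants are exactly what the gradient descent step of \cref{alg:gd} requires.
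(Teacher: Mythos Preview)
Your proposal is correct and follows essentially the same approach as the paper: the lower bound via the explicit direction $(\smean-x)/\tilde d$ combined with \cref{rem:a21}, and the upper bound by contradiction using \cref{lem:conv} at $r=1.25\tilde d$. The only cosmetic difference is that the paper derives the contradiction in the upper bound by concluding $X_{b_j,b_j}>1$ (keeping the factor $X_{b_j,b_j}\radst$ rather than bounding it by $\radst$), whereas you go straight to $0.0625\tilde d\le\radst$; both close under $\tilde d\ge 20\radst$.
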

\begin{proof}
  Let us define the direction $\dd$ to be the unit vector in the direction of $x - \smean$. From \cref{rem:a21}, the number of $Z_i$ satisfying $\inp{Z_i - \smean}{\dd} \geq \radst$ is less than $k / 20$. Therefore, we have that for at least $0.95k$ points:
  \begin{equation*}
    \inp{Z_i - x}{-\dd} = \inp{x - \smean + \smean - Z_i}{\dd} = \norm{x - \smean} - \radst \geq 0.95\tilde{d}.
  \end{equation*}
Along with the monotonicity of $\ref{eq:mt}(x,r,\bm{Z})$ in $r$ (\cref{lem:mtmon}), this implies the lower bound.

For the upper bound, we show that the optimal value of $\ref{eq:mt}(x,1.25\tilde{d},\bm{Z})$ is less than $0.9k$. For the sake of contradiction, suppose that this optimal value is greater than $0.9k$. Let $X$ be a feasible solution of $\ref{eq:mt}(x, 1.25\tilde{d}, \bm{Z})$ that achieves $0.9k$. Let $\mathcal{R}$ and $\mathcal{T}$ be the two sets whose existence is guaranteed by \cref{lem:conv} and $j$ be an element in their intersection. We have for $j$:
  \begin{align*}
    0.85 (1.25\tilde{d}) &\leq X_{b_j,b_j} 1.25\tilde{d}\leq \inp{Z_j - x}{v_{b_j}} = \inp{Z_j - \smean}{v_{b_j}} + \inp{\smean - x}{v_{b_j}}  \\
    &< X_{b_j, b_j} \radst + \norm{\smean \!-\! x} 
    \!=\! X_{b_j, b_j}\radst \!+\! \tilde{d},
  \end{align*}
  where the first inequality follows from the inclusion of $j$ in $\mathcal{R}$ and the last inequality follows from the inclusion of $j$ in $\mathcal{T}$ and Cauchy-Schwarz.
By rearranging the above inequality, we get:
  \begin{equation*}
    X_{b_j, b_j} > (1.0625\tilde{d} - \tilde{d})(\radst)^{-1} > 1,
  \end{equation*}
which is a contradiction. Therefore, we get from the monotonicity of $\ref{eq:mt}(x,r,\bm{Z})$ (see \cref{lem:mtmon}), that $d^\prime \leq 1.25\tilde{d}$ and this concludes the proof of the lemma.
\end{proof}

\subsection{Gradient Estimation Step}\label{ssec:gradest}
In this section, we analyze the gradient estimation step of the algorithm. We show that an approximate gradient can be found as long as the current point $x$ is not too close to $\smean$. 
The following lemma shows that \cref{alg:geste} produces a nontrivial estimate of the gradient.
\begin{lemma}
  \label{lem:gest}
  Suppose a point $x$ satisfies $\norm{x - \smean} \geq \radAlg$ and let $\dd$ be the unit vector along $\smean - x$. Then, \cref{alg:geste} returns a vector $g$ satisfying:
  \begin{equation*}
    \inp{g}{\dd} \geq \frac{1}{15}.
  \end{equation*}
\end{lemma}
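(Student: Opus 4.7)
My plan is to reduce the claim to a one-shot eigenvalue computation on the trace-one PSD $v$-block $X_{vv}$ of the optimal SDP solution returned by $\ref{eq:mt}(x, d^*, \bm{Z})$. Writing $\tilde d = \norm{x-\smean}$, the hypothesis $\tilde d \geq 20\radst$ together with \cref{lem:dest} gives $d^* \geq 0.95\, \tilde d$, and by the specification of \cref{alg:deste} the value $\mt(x, d^*, \bm Z)$ is at least $0.9 k$. Letting $X$ be the corresponding SDP optimum, \cref{lem:conv} yields sets $\mathcal T$ (size $\geq 0.85 k$) and $\mathcal R$ (size $\geq k/3$), so any $i^* \in \mathcal T \cap \mathcal R$ satisfies $X_{b_{i^*},b_{i^*}} \geq 0.85$.

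For this $i^*$, feasibility of $X$ at $(x,d^*)$ gives $\inp{v_{b_{i^*}}}{Z_{i^*}-x} \geq X_{b_{i^*},b_{i^*}} d^*$, while $i^* \in \mathcal T$ gives $\inp{v_{b_{i^*}}}{Z_{i^*}-\smean} < X_{b_{i^*},b_{i^*}} \radst$; subtracting and using $d^* - \radst \geq 0.95\tilde d - \tilde d/20 = 0.9 \tilde d$, I conclude $\inp{\dd}{v_{b_{i^*}}} \geq 0.9\, X_{b_{i^*},b_{i^*}}$. The generalized Cauchy--Schwarz $(\phi^\top X \psi)^2 \leq (\phi^\top X \phi)(\psi^\top X \psi)$ for PSD $X$, applied with $\phi$ supported on the $v$-block with coordinates $\dd$ and $\psi = e_{b_{i^*}}$, then gives $\inp{\dd}{v_{b_{i^*}}}^2 \leq (\dd^\top X_{vv}\dd)\, X_{b_{i^*},b_{i^*}}$, so $\dd^\top X_{vv}\dd \geq 0.81\, X_{b_{i^*},b_{i^*}} \geq 0.81 \cdot 0.85 > 2/3$.

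The remainder is an elementary eigenvalue argument using the SDP constraint $\Tr(X_{vv}) = 1$ together with $X_{vv} \succeq 0$. Let $g$ be the top eigenvector of $X_{vv}$ with sign chosen so that $\inp{g}{\dd}\geq 0$, and $\lambda_1$ its top eigenvalue: $\lambda_1 \geq \dd^\top X_{vv}\dd > 2/3$, while all other eigenvalues sum to $1-\lambda_1 < 1/3$. Decomposing $\dd = cg + s\hat{g}^\perp$ with $\hat{g}^\perp \perp g$ a unit vector and $c^2+s^2=1$, the inequality $\dd^\top X_{vv}\dd \leq c^2 \lambda_1 + s^2(1-\lambda_1)$ rearranges to $c^2(2\lambda_1-1) \geq \dd^\top X_{vv}\dd - (1-\lambda_1) \geq 1/3$; since $2\lambda_1-1 \leq 1$, this yields $c \geq 1/\sqrt{3}$, comfortably above $1/15$.

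The main obstacle is the Cauchy--Schwarz step that lifts the column-wise bound $\inp{\dd}{v_{b_{i^*}}} \geq 0.9\, X_{b_{i^*},b_{i^*}}$ into the block statement $\dd^\top X_{vv}\dd \geq 2/3$, which requires carefully instantiating the probe vectors inside the ambient PSD matrix; everything else is routine. The large gap between the bound $1/\sqrt{3}$ that drops out and the claimed $1/15$ leaves substantial headroom for looseness elsewhere in the argument.
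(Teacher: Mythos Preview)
Your main argument is correct and follows essentially the same route as the paper: extract $i^*\in\mathcal T\cap\mathcal R$ from \cref{lem:conv}, subtract the feasibility constraint at $(x,d^*)$ from the $\mathcal T$-inequality to bound $\inp{\dd}{v_{b_{i^*}}}$ from below, transfer this to a lower bound on $\dd^\top X_{vv}\dd$, and conclude that the top eigenvector of $X_{vv}$ is well aligned with $\dd$. The paper carries out the last two steps by factoring $X=UU^\top$, showing $\|U_v\dd\|\geq 0.75$ (equivalently $\dd^\top X_{vv}\dd\geq 0.5625$), and then using $\|U_v\|_F=1$ with a triangle inequality on $\|U_v y\|$ to end at $|\inp{y}{\dd}|\geq 0.75-0.67$. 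Your PSD Cauchy--Schwarz step and the direct Rayleigh-quotient decomposition are a cleaner execution of the same idea and yield the sharper constant $1/\sqrt 3$; keeping the factor $X_{b_{i^*},b_{i^*}}$ on both sides before applying Cauchy--Schwarz is exactly what buys you the improvement.

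There is, however, one genuine gap. You write ``with sign chosen so that $\inp{g}{\dd}\geq 0$,'' but the algorithm does not know $\dd$ (it does not know $\smean$), so it cannot make this choice. What you have actually established is only $|\inp{g}{\dd}|\geq 1/\sqrt 3$. The paper addresses this in its final paragraph: applying \cref{rem:a21} to $-y$ shows at least $0.95k$ indices satisfy $\inp{Z_i-\smean}{y}>-\radst$, hence for those indices
\[
\inp{Z_i-x}{y}=\inp{Z_i-\smean}{y}+\tilde d\,\inp{\dd}{y}>-\radst+\frac{\radAlg}{15}>0
\]
whenever $\inp{y}{\dd}>0$. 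Consequently the sign $\varepsilon\in\{\pm1\}$ for which a majority of the $\inp{Z_i-x}{\varepsilon y}$ are positive is precisely the sign with $\inp{\varepsilon y}{\dd}>0$, and this criterion is computable from the data alone. You need to add this (or an equivalent) sign-selection argument; without it the stated conclusion $\inp{g}{\dd}\geq 1/15$ does not follow.
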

\begin{proof}
In the running of \cref{alg:geste}, let $X$ denote the solution of $\ref{eq:mt}(x,d^*,\bm{Z})$. We begin by factorizing the solution $X$ as $UU^\top$, with the rows of $U$ denoted by $u_1$, $u_{b_1}, \dots, u_{b_k}$ and $u_{v_1}, \dots, u_{v_d}$. We also define the matrix $U_v \!=\! (u_{v_1}, \dots, u_{v_d})$ in $\mb{R}^{(k + d + 1) \times d}$. From the constraints in \ref{eq:mt}, we~have:
  \begin{equation*}
    X_{b_i, b_i} = \norm{u_{b_i}}^2 \leq 1 \implies \norm{u_{b_i}} \leq 1,\quad \sum_{j = 1}^d X_{v_j, v_j} = \sum_{j = 1}^d \norm{u_{v_j}}^2 = \norm{U_v}_F^2 = 1 \implies \norm{U_v}_F = 1.
  \end{equation*}
Let $\mathcal{R}$ and $\mathcal{T}$ denote the sets defined in \cref{lem:conv}. Let $j \in \mathcal{T} \cap \mathcal{R}$. By noting that $v_{b_j} = u_{b_j}^\top U_v$, we have:
  \begin{equation*}
     0.85d^* \leq \inp{Z_j - \smean}{v_{b_j}} + \inp{\smean - x}{v_{b_j}} \leq X_{b_j, b_j} \radst + u_{b_j}^\top U_v (\smean - x),
  \end{equation*}
where the first inequality follows from the inclusion of $j$ in $\mathcal{R}$ and the second from its inclusion in $\mathcal{T}$. By rearranging this equation and using our bound on $d^*$ from \cref{lem:dest}, we obtain:
  \begin{equation}\label{eq:aboveeq}
     0.80 \norm{\smean - x} \leq 0.85d^* \leq X_{b_j, b_j} \radst + u_{b_j}^\top U_v (\smean - x).
  \end{equation}
By rearranging \cref{eq:aboveeq}, using Cauchy-Schwarz, $\norm{u_{b_i}} \leq 1$ and the assumption on $\norm{x - \smean}$:
  \begin{equation*}
    \norm{U_v (\smean - x)} \geq u_{b_j}^\top U_v (\smean - x) \geq 0.75 \norm{\smean - x},
  \end{equation*}
which yields:
\begin{equation*}
    \norm{U_v \dd} \geq 0.75.
  \end{equation*}
  Now, we have:
  \begin{equation*}
    1 = \norm{U_v}_F^2 = \norm{U_v \proj_\dd}_F^2 + \norm{U_v \projp_\dd}_F^2 \geq \norm{U_v \projp_\dd}_F^2 + (0.75)^2 \implies \norm{U_v \projp_\dd}_F \leq 0.67.
  \end{equation*}
 Let $y$ be the top singular vector of $X_v$. Note that $X_v = U_v^\top U_v$ and $y$ is also the top right singular vector of $U_v$. We have that:
  \begin{equation*}
    0.75 \leq \norm{U_v y} \leq \norm{U_v\proj_\dd y} + \norm{U_v\projp_\dd y} \leq \norm{\proj_\dd y} + \norm{U_v\projp_\dd}_F \leq \norm{\proj_\dd y} + 0.67.
  \end{equation*}
 This means that we have:
  \begin{equation*}
    \abs{\inp{y}{\dd}} \geq \frac{1}{15}.
  \end{equation*}
Note that the algorithm returns either $y$ or $-y$.  Consider the case where $\inp{y}{\dd} > 0$. From \cref{rem:a21} (implied by \cref{def:relax}), we have for at least $0.95k$ points:
  \begin{equation*}
    \inp{Z_i - \smean}{y} \leq \radst.
  \end{equation*}
  Therefore, we have for $0.95k$ points:
  \begin{equation*}
    \inp{Z_i - x}{y} = \inp{Z_i - \smean}{y} + \inp{\smean - x}{y} \geq - \radst + \frac{\radAlg}{15} > 0.
  \end{equation*}
This means that in the case where $\inp{y}{\dd} > 0$, we return $y$ which satisfies $\inp{\smean - x}{y} > 0$. This implies the lemma in this case. The case where $\inp{y}{\dd} < 0$ is similar, with $-y$ used instead of $y$. This concludes the proof of the lemma.
\end{proof}

We now prove a lemma regarding the output of \cref{alg:gd}.

\begin{lemma}
    \label{lem:gd_output}
    Let $\bm{Z} = \{Z_i\}_{i = 1}^k$ be $k$ points in $\mb{R}^d$, $\smean \in \mb{R}^d$ and $r^*$ be as in \cref{def:relax}. Then, \cref{alg:gd}, with input $\bm{Z}$, initialization $x^\dagger$, and number of iterations $T \geq \cgd \log \frac{\norm{\smean - x^\dagger}}{\epsilon}$ satisfies:
    \begin{equation*}
        \norm{x^* - \smean} \leq \max\lbrb{\radRet, \epsilon}.
    \end{equation*}
\end{lemma}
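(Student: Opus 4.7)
The plan is to combine a per-iteration contraction argument with a case split on whether the trajectory ever enters the ``success region'' $\{x : \norm{x - \smean} \leq \radAlg\}$. The update $\xt{t+1} = \xt{t} + \tfrac{1}{20}\dt{t}\gt{t}$ with unit vector $\gt{t}$ expands as
\[
    \norm{\xt{t+1} - \smean}^2 = \norm{\xt{t} - \smean}^2 - \tfrac{\dt{t}}{10}\inp{\gt{t}}{\smean - \xt{t}} + \tfrac{\dt{t}^2}{400}.
\]
Whenever $\norm{\xt{t} - \smean} \geq \radAlg$, Lemma~\ref{lem:dest} pins $\dt{t}$ into $[0.95\norm{\xt{t} - \smean},\, 1.25\norm{\xt{t} - \smean}]$ while Lemma~\ref{lem:gest} supplies $\inp{\gt{t}}{\smean - \xt{t}} \geq \norm{\xt{t} - \smean}/15$. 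Substituting and simplifying produces a uniform absolute constant $c < 1$ (a direct check gives $c \leq 1 - \tfrac{0.95}{150} + \tfrac{1.25^2}{400} < 0.998$) with $\norm{\xt{t+1} - \smean}^2 \leq c\norm{\xt{t} - \smean}^2$.

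Next I would split into two cases. If some iterate $t \leq T$ satisfies $\norm{\xt{t} - \smean} \leq \radAlg$, then Lemma~\ref{lem:destg} gives $\dt{t} \leq \radAlgG$, so $\optd \leq \radAlgG$ at termination. For the returned $\optx = \xt{t^*}$ with $\dt{t^*} = \optd$, either $\norm{\xt{t^*} - \smean} \leq \radAlg < \radRet$ already, or $\norm{\xt{t^*} - \smean} > \radAlg$ and Lemma~\ref{lem:dest} gives $\norm{\xt{t^*} - \smean} \leq \dt{t^*}/0.95 \leq \radAlgG/0.95 < \radRet$. In the complementary case where every iterate remains outside $\radAlg$, the per-step contraction applies throughout the whole run, yielding $\norm{\xt{T} - \smean} \leq c^{T/2}\norm{x^\dagger - \smean}$. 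Choosing $\cgd$ large enough relative to $\abs{\log c}$ then ensures that $T \geq \cgd\log(\norm{x^\dagger - \smean}/\epsilon)$ drives $c^{T/2}\norm{x^\dagger - \smean}$ below $0.76\epsilon$, so $\optd \leq \dt{T} \leq 1.25\norm{\xt{T} - \smean} \leq 0.95\epsilon$. Applying Lemma~\ref{lem:dest} one last time at $\xt{t^*}$ (legal, since every iterate is outside $\radAlg$) yields $\norm{\optx - \smean} \leq \optd/0.95 \leq \epsilon$. Combining the two cases gives $\norm{\optx - \smean} \leq \max(\radRet, \epsilon)$.

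The main subtlety is that the algorithm selects $\optx$ using the estimated distances $\dt{t}$ rather than true distances, so one must check that a small value of $\optd$ actually certifies proximity to $\smean$. This is handled uniformly by Lemmas~\ref{lem:destg} and \ref{lem:dest}, which relate $\dt{t}$ to $\norm{\xt{t} - \smean}$ up to constant factors, and the slack between $\radAlgG$ and $\radRet$ is exactly what absorbs the resulting distortion. The individual calculations are routine; the real care lies in aligning the absolute constants so that the step size $1/20$, the angular lower bound $1/15$ from Lemma~\ref{lem:gest}, and the distance-estimation factors $0.95$ and $1.25$ all combine to give a contraction factor strictly below one.
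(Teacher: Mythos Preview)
Your proposal is correct and follows essentially the same approach as the paper: the same case split on whether an iterate enters the ball of radius $\radAlg$, the same per-step contraction derived from \cref{lem:dest} and \cref{lem:gest}, and the same use of \cref{lem:destg} together with \cref{lem:dest} to certify that a small $\optd$ forces the returned point close to $\smean$. Your handling of the returned iterate $\xt{t^*}$ is in fact slightly more explicit than the paper's, but the structure and constants line up with the paper's argument.
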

\begin{proof}
    First, let $\mc{G} = \{x: \norm{x - \smean} \leq 20r^*\}$. We prove the lemma in two cases:
  \begin{enumerate}
    \item[] \textbf{Case 1: } None of the iterates $\xt{t}$ lie in $\mathcal{G}$. In this case, we have from \cref{lem:dest}:
    \begin{equation}
    \label{eqn:dbo}
        0.95 \norm{\xt{t} - \smean} \leq \dt{t} \leq 1.25 \norm{\xt{t} - \smean}.
    \end{equation}
    This yields:
    \begin{align*}
      \norm{\xt{t + 1} - \smean}^2 &= \norm{\xt{t} - \smean}^2 - 2\frac{\dt{t}}{20}\inp{\gt{t}}{\smean - \xt{t}} + \frac{\dt{t}^2}{400} \leq \norm{\xt{t} - \smean}^2 - \frac{\dt{t}\norm{\smean - \xt{t}}}{150} + \frac{\dt{t}^2}{400} \\
      &\leq \norm{\xt{t} - \smean}^2 - \dt{t} \lprp{\frac{\norm{\mu - \xt{t}}}{150} - \frac{\dt{t}}{400}} \leq \lprp{1 - \frac{1}{500}} \norm{\xt{t} - \smean}^2,
    \end{align*}
    where the first inequality follows from \cref{lem:gest} and the last inequality follows by substituting the lower bound on $\dt{t}$ in the first term and the upper bound on $\dt{t}$ in the second term (\cref{eqn:dbo}). By an iterated application of the above inequality, we have:
    \begin{equation*}
        \norm{x^* - \smean} \leq \frac{1}{0.95}\cdot d^*\leq \frac{1}{0.95} \cdot \dt{T+1} \leq \frac{\epsilon}{10},
    \end{equation*}
    which concludes the proof of the lemma in this case.
    \item[] \textbf{Case 2: } One of the iterates $\xt{t}$ falls into the set $\mathcal{G}$. If the algorithm returns an element from $\mathcal{G}$, the lemma is true from the definition of $\mc{G}$. Otherwise, from \cref{lem:destg},  we have for $\xt{t}\in \mathcal{G}$ that $\dt{t} \leq \radAlgG$. Therefore, we have at the conclusion of the algorithm a value $\optd \leq \radAlgG$ along with a returned vector $x^*$ lying outside $\mathcal{G}$. Thus, we have from \cref{lem:dest}:
    \begin{equation*}
      0.95\norm{x^* - \smean} \leq \radAlgG \implies \norm{x^* - \smean} \leq \radRet.
    \end{equation*}
  \end{enumerate}
  This concludes the proof of the lemma.
\end{proof}
\section{Proof of Theorem~\ref{thm:ub_simple}}
\label{sec:pf_main}

We assemble the results established in other sections to prove \cref{thm:ub_simple}. Let $x^\dagger$ denote the output of \cref{alg:inite} in the running of \cref{alg:meste}. Note that this is passed as input to \cref{alg:part,alg:loce}. We now define the event $\mc{E}$:
\begin{equation*}
    \mc{E} \coloneqq \{\norm{x^\dagger - \mu} \leq 24 \sqrt{d}\}.
\end{equation*}
From \cref{lem:init,lem:mnConc}, we may assume the conclusions of \cref{lem:mnConc}, an event which occurs with probability at least $1 - \delta$. Since $\norm{x^\dagger - \mu} \leq 24 \sqrt{d}$, the proof of the theorem follows from \cref{lem:gd_output}, our bound on the number of iterations $T$ and the final desired accuracy. 

\qed
\section{Lower Bound for Robust Estimation under Weak Moments}
\label{sec:rob_lb}

In this section, we establish a lower bound for robust mean estimation under weak moments. The lower bound will be a consequence of the following theorem:

\begin{theorem}
    \label{thm:rob_lb}
    Given $\rob, \alpha \in (0, 1)$, there exist two distributions $\mc{D}_1$ and $\mc{D}_2$ over $\mb{R}$ with means $\mu_1$ and $\mu_2$, respectively, satisfying:
    
    \begin{enumerate}
        \item $d_{TV}(\mc{D}_1, \mc{D}_2) \leq \frac{\rob}{4}$
        \item $\abs{\mu_1 - \mu_2} \geq \frac{1}{4} \cdot \rob^{\alpha / (1 + \alpha)}$
        \item $\mb{E}_{X \ts \mc{D}_1} [\abs{X - \mu_1}^{1 + \alpha}], \mb{E}_{X \ts \mc{D}_2} [\abs{X - \mu_2}^{1 + \alpha}] \leq 1$.
    \end{enumerate}
\end{theorem}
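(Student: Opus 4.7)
The plan is to exhibit an explicit pair of one-dimensional discrete distributions and verify the three conditions directly; nothing subtle is needed. The intuition is that the robust rate $\eta^{\alpha/(1+\alpha)}$ comes from moving a $p \approx \eta$ fraction of mass to a location at distance $\approx p^{-1/(1+\alpha)}$, which is the largest displacement compatible with a bounded $(1+\alpha)$-moment.

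Concretely, set $p = \eta/4$ and $v = \eta^{-1/(1+\alpha)}$. Take $\mc{D}_2 = \delta_0$ (point mass at the origin) and $\mc{D}_1 = (1 - p)\,\delta_0 + p\,\delta_v$. For the first condition, the two measures differ only in how they distribute mass between $\{0\}$ and $\{v\}$, so
\begin{equation*}
    d_{TV}(\mc{D}_1, \mc{D}_2) = \tfrac{1}{2}\bigl(|{-p}| + |p|\bigr) = p = \eta/4.
\end{equation*}
For the second condition, $\mu_2 = 0$ and $\mu_1 = p v = (\eta/4)\,\eta^{-1/(1+\alpha)} = \tfrac{1}{4}\eta^{\alpha/(1+\alpha)}$, which gives the required separation with equality.

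For the third (moment) condition, $\mc{D}_2$ is trivial. For $\mc{D}_1$ centered at $\mu_1 = pv$, the central moment is
\begin{equation*}
    (1-p)(pv)^{1+\alpha} + p\bigl((1-p)v\bigr)^{1+\alpha} = p(1-p)\,v^{1+\alpha}\bigl(p^\alpha + (1-p)^\alpha\bigr) \leq 2 p\,v^{1+\alpha}.
\end{equation*}
Substituting $p = \eta/4$ and $v^{1+\alpha} = \eta^{-1}$ yields the bound $2 \cdot (\eta/4) \cdot \eta^{-1} = 1/2 \leq 1$, as required.

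There is no serious obstacle here: the construction is forced by dimensional analysis, and the verification reduces to an algebraic identity plus a one-line estimate using $p^\alpha + (1-p)^\alpha \leq 2$. The only thing to watch is aligning constants so that both the mean-gap and the moment condition are tight simultaneously; the choice $p = \eta/4$ (rather than $\eta$) is exactly what balances the $1/4$ in the mean-gap lower bound against the unit upper bound on the moment.
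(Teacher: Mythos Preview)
Your proposal is correct and is essentially the same construction as the paper's (with the labels $\mc{D}_1$ and $\mc{D}_2$ swapped, which is immaterial). Your verification of the central moment via the exact factorization $p(1-p)v^{1+\alpha}\bigl(p^\alpha + (1-p)^\alpha\bigr)$ is in fact a bit cleaner than the paper's, which bounds $\mb{E}[|X-\mu_2|^{1+\alpha}]$ via the convexity inequality $|X-\mu_2|^{1+\alpha} \leq 2^\alpha(|X|^{1+\alpha} + |\mu_2|^{1+\alpha})$.
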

\begin{proof}
    We prove the theorem by explicit construction. Let $\mc{D}_1$ be a $\delta$-distribution on $0$: $\mb{P}_{X \ts \mc{D}_1} (X = 0) = 1$. We have $\mu_1 = 0$ and the weak moment condition holds trivially for $\mc{D}_1$. Now, for $\mc{D}_2$, we have:
    
    \begin{equation*}
        \mb{P}_{X \ts \mc{D}_2} (X = x) = \begin{cases}
                                            1 - \frac{\rob}{4}, & \text{when } x = 0 \\
                                            \frac{\rob}{4}, & \text{when } x = \lprp{\frac{1}{\rob}}^{1 / (1 + \alpha)} \\
                                            0, &\text{otherwise}.
                                          \end{cases}.
    \end{equation*}
    
    From the definitions of $\mc{D}_1$ and $\mc{D}_2$, we obtain the first conclusion of the lemma. By direct computation, we have $\mu_1 = 0$ and $\mu_2 = \frac{1}{4} \cdot \rob^{\alpha / (1 + \alpha)}$. Finally, we verify the weak moment condition on $\mc{D}_2$ using the convexity of the function $f(x) = \abs{x}^{1 + \alpha}$:
    
    \begin{equation*}
        \mb{E}_{X \ts \mc{D}_2} [\abs{X - \mu_2}^{1 + \alpha}] \leq 2^{\alpha} \cdot \mb{E} [\abs{X}^{1 + \alpha} + \abs{\mu_2}^{1 + \alpha}] \leq 2^{\alpha} \lprp{\frac{1}{4} + \frac{\rob^\alpha}{4^{(1 + \alpha)}}} \leq 1.
    \end{equation*}
    
    This concludes the proof of the theorem.
\end{proof}

\end{document}